\newtheorem{thm}{Theorem}[section]
\newtheorem{pro}[thm]{Proposition}
\newtheorem{lem}[thm]{Lemma}
\newtheorem{fact}[thm]{Fact}
\newtheorem{rem}[thm]{Remark}
\newtheorem{de}[thm]{Definition}
\def \beq{\begin{equation}}
\def \eq{\end{equation}}
\def\eqd{\,{\buildrel (d) \over =}\,} 
\def\cvd{\,{\buildrel (d) \over \longrightarrow}\,}
\def\cvp{\,{\buildrel \p \over \longrightarrow}\,}
\def\Var{{\rm Var}\,}
\def \eref#1{(\ref{#1})}
\newcommand{\T}{\mathcal{T}}
\newcommand{\V}{\mathcal{V}}
\newcommand{\I}{\mathcal{I}}
\newcommand{\f}{\mathfrak{f}}
\newcommand{\F}{\mathcal{F}^0}
\newcommand{\D}{\mathcal{D}}
\newcommand{\U}{\mathcal{U}}
\newcommand{\W}{\mathcal{W}} 
\newcommand{\p}{\mathbb{P}}
\newcommand{\N}{\mathbb{N}}
\newcommand{\C}{\mathcal{C}}
\newcommand{\E}{\mathbb{E}}
\newcommand{\R}{\mathbb{R}}
\newcommand{\pt}{\mathcal{CT}}
\newcommand{\ft}{\mathcal{FT}}
\newcommand{\up}{\mathbb{U}^{\nu}}
\newcommand{\qp}{\mathbb{H}^{\nu}}
\newcommand{\Dir}{\mathrm{Dir}}
\newcommand{\dm}{\mathrm{dim}_H}
\newcommand{\ms}{\mathcal{M}_S(V_2^*)}
\newcommand{\overbar}[1]{\mkern 1.5mu\overline{\mkern-1.5mu#1\mkern-1.5mu}\mkern 1.5mu}
\begin{document}

\title{Representations of stack triangulations in the plane}
\author{Thomas Selig \\ LaBRI, CNRS \\ Université Bordeaux 1 }
\maketitle

\begin{abstract}
Stack triangulations appear as natural objects when defining an increasing family of triangulations by successive additions of vertices. We consider two different probability distributions for such objects. We represent, or ``draw" these random stack triangulations in the plane $\R^2$ and study the asymptotic properties of these drawings, viewed as random compact metric spaces. We also look at the occupation measure of the vertices, and show that for these two distributions it converges to some random limit measure.
\end{abstract}

{\bf Keywords:} Stack triangulations; Occupation measure; Limit Theorem.

\section{Introduction}\label{Intro}
Consider a rooted triangulation of the plane, and some finite face $\f$, say $ABC$, of this triangulation. We insert a vertex $M$ in $\f$, and add the three edges $AM$, $BM$, $CM$ to the original triangulation. We obtain a triangulation with two faces more than the original triangulation (the face $\f$ has been replaced by three new faces). Thus, starting from a single rooted triangle, after $k$ such insertions, we get a triangulation with $k$ internal vertices, that is which aren't vertices of the original rooted triangle, and $2k+1$ finite faces. The set of triangulations with $k$ internal vertices which can be reached through this growing procedure is denoted $\Delta_k$. We call such triangulations \emph{stack triangulations}. Note that through this construction we do not obtain the set of all rooted triangulations. This iterative process is demonstrated in Figure \ref{ST}.

\begin{figure}[h]
\centerline{\includegraphics{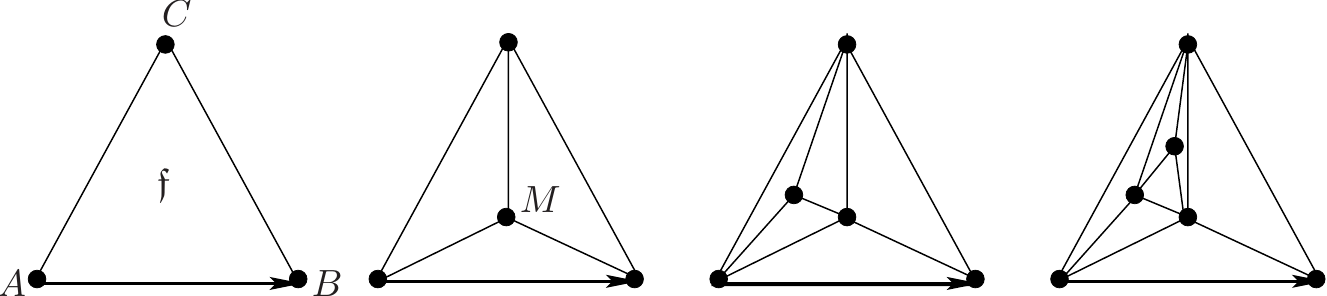}}
\caption{Iterative construction of a stack triangulation}
\label{ST}
\end{figure}

We endow the set $\Delta_k$ with two natural probability distributions:
\begin{itemize}
\item The first is the uniform distribution $\mathbb{U}_k^{\Delta}$.
\item The second distribution $\mathbb{H}_k^{\Delta}$ is the distribution induced by the above construction where at each step the face in which we insert the vertex is chosen uniformly at random among all finite faces, independently from the past.
\end{itemize}

\subsection{The object of our study}
In this paper, rather than look at stack triangulations as maps, that is up to homeomorphism, we look at particular representations, or drawings, of such objects in the plane, and the geometrical properties of such representations. That is, at each insertion of a new vertex, we draw the line segments corresponding to the edges added. We call such representations \textit{compact triangulations}, and view them as compact subspaces of $\R^2$. The main difference is that while maps are graphs drawn in the plane, they are considered only up to homeomorphism, whereas we are interested in the actual representation. We are rather informal here, but will give formal definitions later in the paper, in Section \ref{formal_def}. 

We take $A=(0,0)$, $B=(1,0)$, $C = e^{i\frac{\pi}{3}}$ (identifying $\mathbb{C}$ and $\R^2$) to be the three points representing the initial rooted triangle, with $(A,B)$ its root. We start with $T_0 = T = [AB] \cup [BC] \cup [CA]$, and set $\D_0 = \{ T_0 \}$. We denote by $\tilde{T_0}$ the \textit{filled} triangle $T_0$, that is the union of $T_0$ and of the finite connected component of $\R^2 \setminus T_0$. At time $1$, we insert a point $M$ somewhere in $\tilde{T_0} \setminus T_0$. We then define $T_1(M) = T_0 \cup [AM] \cup [BM] \cup [CM] $, and set
\[ \D_1 =  \{ T_1(M); \ M \in \tilde{T_0} \setminus T_0\}.\]

Now let $T_1(M) \in \D_1$ for some $M \in \tilde{T_0} \setminus T_0$. Write $T_1^{(1)}(M) = [AB] \cup [BM] \cup [MA]$, $T_1^{(2)}(M) = [BC] \cup [CM] \cup [MB]$, $T_1^{(3)}(M) = [CA] \cup [AM] \cup [MC]$, and similarly, $\tilde{T_1}^{(i)}(M)$ for the corresponding filled triangles. At time 2, we insert a point $N$ somewhere in one of the $\tilde{T_1}^{(i)}(M) \setminus T_1^{(i)}(M)$'s. We then define
\[ T_2(M,N) := T_1(M) \cup [XN] \cup [YN] \cup [MN],\]
where $(X,Y) = \left\lbrace
\begin{array}{cc}
(A,B) & \mbox{if } N \in \tilde{T_1}^{(1)}(M) \setminus T_1^{(1)}(M)\\
(B,C) & \mbox{if } N \in \tilde{T_1}^{(2)}(M) \setminus T_1^{(2)}(M) \\
(C,A) & \mbox{if } N \in \tilde{T_1}^{(3)}(M) \setminus T_1^{(3)}(M)
\end{array}
\right. $, and finally set
\[ \D_2 = \left\lbrace T_2(M,N); \ M \in \tilde{T_0} \setminus T_0 \mbox{ and } N \in \bigcup_{i=1}^3 \left( \tilde{T_1}^{(i)}(M) \setminus T_1^{(i)}(M) \right) \right\rbrace .\]
Figure \ref{iter} illustrates this initial construction.

\begin{figure}[!h]
\centerline{\includegraphics{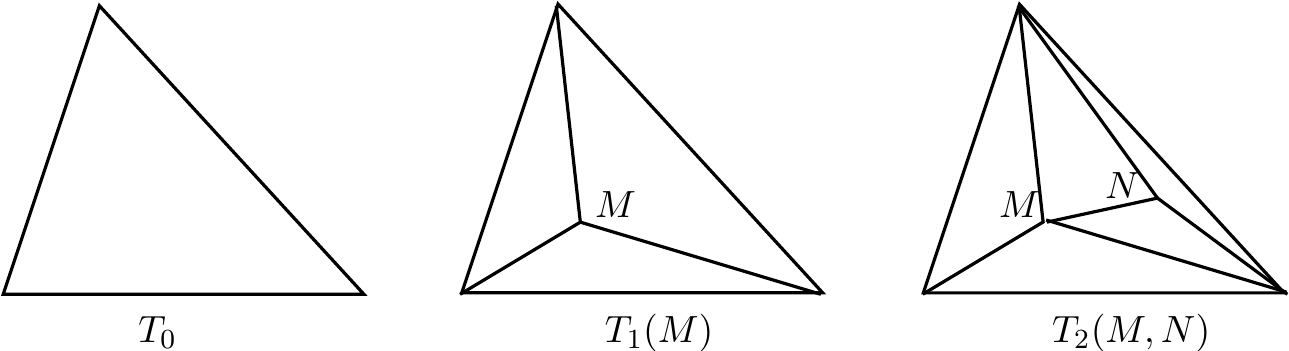}}
\caption{\label{iter} Construction of $\D_0,\D_1$ and $\D_2$}
\end{figure}

Iterating this construction by choosing at each step some triangular face of our drawing and splitting it, we obtain representations of stack triangulations in the plane, and call these compact triangulations. Denote $\D_k$ the set of such objects with $k$ internal vertices (that is, after $k$ successive insertions of vertices), and for $m \in \D_k$ write $\V(m)$ for its set of internal vertices (viewed as a set of points in the plane). Finally, for $m \in \D_k$ we define the occupation measure of $m$ by
\beq\label{def_OM1}
\mu(m) := \frac1k \sum_{x \in \V(m)} \delta_x,
\eq
where $\delta_x$ stands for the Dirac mass at $x$.

We are interested here in the case where the successive insertions of vertices are done at random. We suppose that all random variables in this paper are defined on some probability space $( \Omega, \mathcal{F}, \p)$. We denote $\E$ the expected value, and $\Var$ the variance. We consider a probability distribution $\nu$ on $\R_+^3$ such that if $P=(P_1,P_2,P_3)$ has law $\nu$ then a.s. $P_i > 0$ for all $i \in \{1,2,3 \}$  and $P_1+P_2+P_3=1$ \footnote{This is the notion of splitting law, defined formally in Section \ref{formal_def}}. We suppose that each insertion of a vertex $M$ in a face $QRS$ is done according to $\nu$, that is we take $M$ to have barycentric coordinates $(Q,P_1),(R,P_2),(S,P_3)$ where $P=(P_1,P_2,P_3)$ has law $\nu$, independently from all previous insertions. Now the two distributions $\mathbb{U}_k^{\Delta}$ and $\mathbb{H}_k^{\Delta}$ on $\Delta_k$ introduced at the start of the section induce probability distributions $\up_k$ and $\qp_k$ on $\D_k$. In words, they are the distributions of the drawings of stack triangulations with distribution $\mathbb{U}_k^{\Delta}$ and $\mathbb{H}_k^{\Delta}$, where the insertions of vertices are made according to $\nu$, independently from each other, and independently from the choice of the underlying stack triangulation. The object of the paper is to study the asymptotic behaviour of these two distributions.

\subsection{Outline of the paper}
In Section \ref{sec_def}, we formally define compact triangulations. We then enrich the classical bijection between stack triangulations and ternary trees (see for instance \cite{AM}, Proposition 1) to encode compact triangulations. For this, in Section \ref{ad_cod}, we introduce the notion of coordinate-labelled ternary trees. These are ternary trees with labels at each vertex, which code compact triangulations via a bijection we establish in Theorem \ref{bij_drawing_markedtree}. We end the section by formally defining the distributions $\up_k$ and $\qp_k$ on $\D_k$.

In Section \ref{unif}, we study the asymptotic behaviour of the uniform distribution $\up_n$ as $n \rightarrow \infty$. The main results are:
\begin{itemize}
\item The weak convergence of the occupation measure as defined in \eref{def_OM1} towards a Dirac mass at a random position (Theorem \ref{UOM}).
\item The weak convergence of the distribution $\up_n$ towards a distribution on compact subspaces of $\R^2$ (Theorem \ref{conv_draw}).
\end{itemize}
Section \ref{sectionUOM} is thus dedicated to the statement and proof of Theorem \ref{UOM}. The statement is split into two parts. The first part states the convergence in distribution of an internal vertex of $m_n$ chosen uniformly at random, where $m_n$ has distribution $\up_n$, to some limit vertex. Though this is weaker than the second part, it is a key ingredient in its proof, and thus we choose to state it separately. In Section \ref{UD} we introduce the notion of local convergence for trees (Definition \ref{loc_conv_def}). In Theorem \ref{cont_thm}, we show that the bijection established in Theorem \ref{bij_drawing_markedtree} which maps a coordinate-labelled tree to a compact triangulation has a property of continuity with respect to this topology of local convergence, from which we infer Theorem \ref{conv_draw}.

Finally, in Section \ref{incr}, we study the asymptotic behaviour of the occupation measure under $\qp_n$. The key ingredient is Poisson-Dirichlet fragmentation, which allows us to view the trees corresponding to the compact triangulations via Theorem \ref{bij_drawing_markedtree} as the underlying tree of a certain fragmentation tree (Theorem \ref{frag_tree}). We then show the convergence of the occupation measure as defined in \eref{def_OM1} to some (random) limit measure $\mu$ (Theorem \ref{IOM}). In Section \ref{IMP} we study the properties of $\mu$. We show (Proposition \ref{at_part}) that a.s. $\mu$ has no atom, and that it is supported on a set whose Hausdorff dimension is at most $\dfrac{2}{3 \log (3)}$ (Theorem \ref{HDim}). 

\subsection{Literature and motivation}

Motivation for this work stems from the paper by Bonichon et al. \cite{Bon}, in which the authors look at convex straight line drawings of triangulations, and establish bounds for the minimal grid size necessary for these drawings, with the constraint that all vertices are located at integer grid points. More precisely, they show that to draw any triangulation with $n$ faces, a grid of size $(n-C) \times (n-C)$ (for some constant $C$) is sufficient, giving a constructive proof of this result by establishing an algorithm for drawing any triangulation. The aim of this paper if to provide an answer to the question: what do these drawings look like? 

More specifically, we aim to explore an approach for the convergence of maps which differs from the traditional combinatorial one. Indeed, maps are embeddings of graphs, but in the combinatorial approach these are viewed up to homeomorphism, and equipped with the graph distance, that is every edge is given the same length. Concerning this approach, we cite the groundbreaking work by Schaeffer in his thesis \cite{Sch}, where he establishes a crucial bijection between maps and a class of labelled trees, as well as the more recent work by LeGall \cite{LeGall} and Miermont \cite{Mier}, who showed (separately, using different techniques) that uniform quandrangulations with $n$ faces, renormalised so that every edge has length $C.n^{\frac14}$ (for some constant $C$), converge in distribution to a continuous limit object called the Brownian map.
In this paper however, we look at the convergence of the embeddings themselves, viewed as (random) compact spaces. This approach is analogous to the work of Curien and Kortchemski \cite{CK}. In this paper, the authors showed the universality of the Brownian triangulation introduced by Aldous \cite{Ald2}, in that is the limit of a number of discrete families called non-crossing plane configurations, such as dissections, triangulations, and non-crossing trees of the regular $n$-gon. As mentioned, Curien and Kortchemski view non-crossing plane configurations as random compact subspaces of the unit disk, and it is these compact spaces which converge to the limit object.

In this paper, we also study the asymptotic behaviour of the occupation measure, as defined in \eref{def_OM1}. Similar work includes the paper by Fekete \cite{Fek} on branching random walks. In this paper, he considers branching random walks where the underlying tree is a binary search tree (this is related to our distribution $\qp_n$ in this paper). He shows that the occupation  measure converges weakly to a limit measure which is deterministic. More work concerning the study of random measures similar to ours can be found in \cite{Ald1}. In this paper, Aldous proposes a natural model for random continuous ``distributions of mass", called the \textit{Integrated super-Brownian Excursion} (ISE), which is the (random) occupation measure of the Brownian snake with lifetime process the normalised Brownian excursion. ISE is defined using random branching structures, and appears to be the continuous limit of occupation measures of several discrete structures.

Finally, let us mention that the combinatorial aspect of stack triangulations has been extensively studied, notably by Albenque and Marckert \cite{AM}, and their paper will therefore be of great use to us. The authors studied both the uniform distribution $\mathbb{U}^{\Delta}_k$ and the other distribution$\mathbb{H}^{\Delta}_k$. More precisely, they showed that:
\begin{itemize}
\item for the topology of local convergence, $\mathbb{U}^{\Delta}_n$ converges weakly to a distribution on the set of infinite maps. 
\item For the Gromov-Hausdorff distance, with the normalising factor $n^{\frac12}$, a map with the uniform distribution $\mathbb{U}^{\Delta}_n$ converges weakly to the continuum random tree introduced by Aldous \cite{CRT}
\item Under the distribution $\mathbb{H}^{\Delta}_n$, the distance between random points rescaled by $\frac{6}{11} \log n$ converges to $1$ in probability.
\end{itemize}

\section{Compact triangulations and encoding with trees}\label{sec_def}

In this section we code compact triangulations, that is the representations of triangulations in the plane, by some labelled trees. There are two main ideas in this coding. First there is the combinatorial bijection between the discrete objects: stack triangulations (viewed up to homeomorphism) and ternary trees. There is a well known bijection which maps internal vertices of the triangulation to internal nodes of the tree and faces of the triangulation to leaves of the tree (see for instance \cite{AM} Proposition 1 and references therein). We then enrich this bijection to include the drawing of the triangulation by adding labels to the tree: these labels correspond to the barycentric coordinates of the vertices of the triangulation.

\subsection{Compact triangulations}\label{formal_def}

Here we build formally the set $\D_k$ of compact triangulations with $k$ internal vertices. The construction is done by induction, and is similar to the construction of stack triangulations. This allows us to observe the tree-like structure of these objects. During the construction, we will define the various notions necessary for the encoding discussed above.
Set as in the introduction $A=(0,0)$, $B=(1,0)$, $C = e^{i\frac{\pi}{3}}$ to be the three points of the original triangle, and define $T = [AB] \cup [BC] \cup [CA]$. Now define $\D_0 = \{ T \}$, and set $\V(T) = \emptyset$. The set $\V(T)$ will be the set of internal nodes of $T$.
Now assume we have constructed $\D_k$ for some $k \geq 0$, such that $\D_k$ is a set of compact subspaces of $\R^2$ and any $m \in \D_k$ satisfies the following properties:
\begin{enumerate}
\item The compact space $m$ is the union of line segments in the plane.
\item There are exactly $2k+1$ finite connected components of $\R^2 \setminus m$, and these are all interiors of triangles. Let $\F(m)$ be the set of these connected components, and call the elements of $\F(m)$ \textit{faces} of $m$. For $f \in \F(m)$ we define $(A_{f},B_{f},C_{f})$ as the three points of the triangle $f$. We can in fact define these points non ambiguously as follows.
\begin{itemize}
\item $X_f = X$ for $X \in \{A,B,C \}$, if $f$ is the interior of the original triangle $T$.
\item If a triangle $f$ is split into three triangles $f_1,f_2,f_3$ by adding a point $M$ in its interior, and $f$ is defined by the three points $A_{f},B_{f},C_{f}$, then $M=A_{f_1} = B_{f_2} = C_{f_3}$ with the other two vertices of each triangle $f_i$ unchanged (that is, $B_{f_1} = B_{f}$, $C_{f_1}=C_{f}$ and so forth). This is illustrated in Figure \ref{trg_order} below.
\end{itemize}
\item Finally assume that for any $m \in \D_k$ we have defined a set $\V(m)$ of $k$ points of $\tilde{T}$, which are the $k$ points inserted at each step of the construction of $m$.
\end{enumerate}

Note that these properties are all satisfied for $k=0$. 
\begin{figure}[!h]
\centerline{\includegraphics{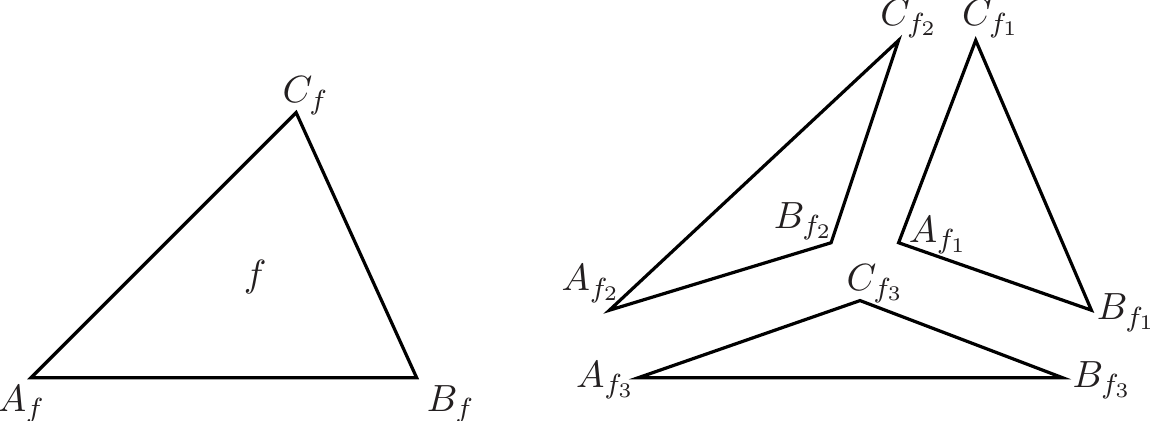}}
\caption{Ordering the vertices of a triangle}
\label{trg_order}
\end{figure}

We now construct the set $\D_{k+1}$. First, let
\[ \dot{\D_k} = \lbrace (m,f) ; \ m \in \D_k, f \in \F(m) \rbrace \]
be the set of compact triangulations with a marked face. Define a map $\I$ from $\dot{\D_k}$ onto the set of compact subspaces of $\R^2$ as follows. Let $(m,f) \in \dot{\D_k}$, and let $(A_{f},B_{f},C_{f})$ be the three (ordered) points of $f$. For any point $M$ in the face $f$ we define
\[m' = \I_M(m,f) := m \cup \{ [A_{f}M], [B_{f}M], [C_{f}M] \}, \]
that is the space $m$ with those three new lines added, connecting the points of the face $f$ with the inserted vertex $M$. The map $\I_M$ is illustrated in Figure \ref{insertion}. We see that there are exactly $2k+3$ finite connected components of $\R^2 \setminus m'$, and these are all interiors of triangles (we have replaced one of them, $f$, by three new ones). We also set 
\beq\label{def_V}
\V(m') = \V(m) \cup \{M \},
\eq
and thus the set $\V(m')$ is a set of $k+1$ points of $\tilde{T}$: it is the set of the internal vertices which define the faces of $m'$. Finally, we can define 
\[ \D_{k+1} :=
 \left\lbrace \I_M(m,f); \, (m,f) \in \dot{\D_k}, M \in f\right\rbrace \]
to be the image of this map. In words, it is the set of `` drawings " of stack triangulations with $k$ internal vertices, with edges included.
\begin{figure}[!h]
\centerline{\includegraphics{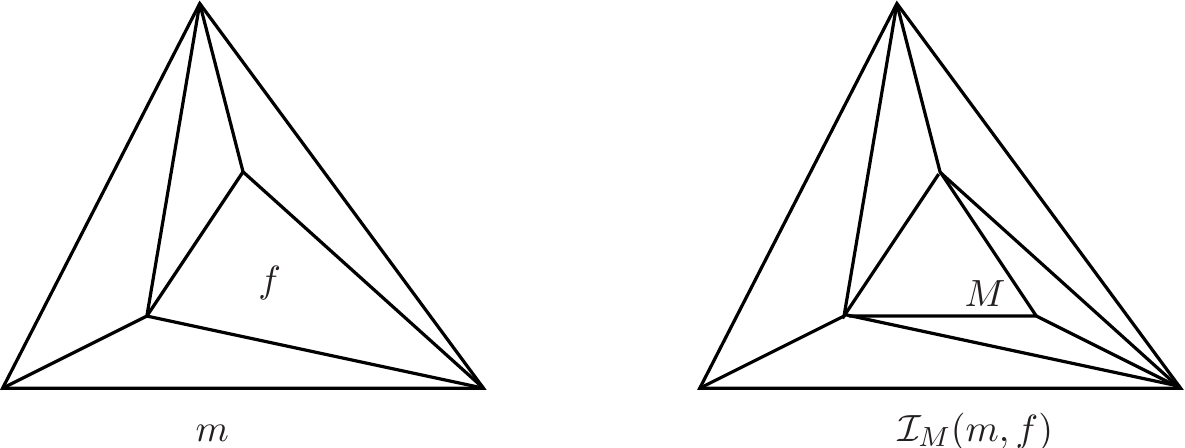}}
\caption{The insertion map $\I$}
\label{insertion}
\end{figure}

\begin{de}
Let $k\geq 0$. For $m \in \D_k$, we call the elements of $\V(m)$ (where $\V(m)$ is defined step by step by \eref{def_V}) the \emph{internal vertices} of $m$. The set $\D_k$ is called the set of compact triangulations with $k$ internal vertices. Finally, we denote
\[ \D = \bigcup_{k \geq 0} \D_k \]
the set of compact triangulations.
\end{de}

\begin{de}\label{defOM}
Let $m \in \D$. We define the \emph{occupation measure} of $m$ by
\beq\label{def_OM}
\mu(m) = \frac{1}{\vert \V(m) \vert} \sum_{x \in \V(m)} \delta_x.
\eq
This is a probability measure in $\R^2$.
\end{de}

Note that $\D_k$ is a set of compact subspaces of $\R^2$. We aim to introduce some probability laws on these sets (as explained in the introduction), and for this we need to equip them with a $\sigma$-field. We first recall the definition of the Hausdorff distance for compact spaces.

\begin{de}
Let $(E,d)$ be a compact metric space. For $A \subseteq E$ and $\varepsilon>0$, define the \emph{$\varepsilon$-neighbourhood of $A$} as the set of points of $E$ whose distance to $A$ is less than $\varepsilon$, that is
\[ V^{ \varepsilon }(A) = \lbrace x \in E, \, d(x,A) < \varepsilon \rbrace. \]
Then for two compact sets $A,B \subseteq E$, the Hausdorff distance between $A$ and $B$ is defined by
\[ d_H(A,B) = \inf \lbrace \varepsilon > 0, \, A \subseteq V^{ \varepsilon }(B) \mbox{ and } B \subseteq V^{ \varepsilon }(A) \rbrace.\]
This defines a distance on the set of compact subspaces of $E$.
\end{de}

We equip the space of compact subspaces of $\tilde{T}$ with the Hausdorff distance. It is a well-known topological fact that this makes it a complete metric space (see for instance \cite{BBI} Section 7.3.1 p. 252). In fact, $(\D_k,d_H)$ is a compact metric space. We equip the sets $\D_k$ with the corresponding Borel $\sigma$-algebra.

\subsection{Encoding with properly marked trees}\label{ad_cod}

We now encode compact triangulations by certain labelled trees. We begin with the purely combinatorial aspect. Let
\[ \mathcal{W} := \bigcup_{n \geq 0} \N^n \]
be the set of all words on $\N = \lbrace 1,2,... \rbrace $, and by convention set $ \N^0 = \lbrace \emptyset \rbrace $.\\
If $u = (u_1,...,u_n) \in \mathcal{W}$ we write $ \vert u \vert = n $ and call this the \textit{height} of $u$. Also, if we take two words $u = (u_1,...,u_n) \, ,v = (v_1,...,v_m) \in \mathcal{W}$, we write $ uv = (u_1,...,u_n,v_1,...,v_m) $ for the concatenation of $u$ and $v$. By convention $u \emptyset = \emptyset u = u$. A \emph{planar tree} is a subset $t \subseteq \W$ such that 
\begin{enumerate}
\item $ \emptyset \in t $.
\item If $u(j) \in t$ for some $u \in \W$ and $j \in \N$, then $u \in t$. \\The notation $u(j)$ is used here to mark the fact that we are concatenating the \emph{words} $u$ and $(j)$, the latter being written so as to differentiate it from the letter $j$.
\item For every $u \in t$ there exists $k_u(t) \in \N$ such that $u(j) \in t$ if and only if $1 \leq j \leq k_u(t)$.
\end{enumerate}

The integer $k_u(t)$ corresponds to the number of children (or descendants) of $u$ in $t$.

We will denote by $\U$ the set of planar trees. If $t \in \U$ is a planar tree, its height $h(t)$ is defined by $h(t) := \sup \lbrace \vert u \vert; \ u \in t \rbrace \in \llbracket 0, \, \infty \rrbracket $. If $u \in t$ has no child (i.e. $k_u(t) = 0$) we say that $u$ is a \emph{leaf} of $t$. Any vertex which isn't a leaf is called an \emph{internal node} of $t$. We denote $t^0$ the set of internal nodes of a tree $t$. If $t$ is a tree, and $u,v$ are in $t$, we write $u \wedge v$ for the \emph{highest common ancestor} of $u$ and $v$, i.e. the element of maximal height of the set $ \lbrace w \in t ; \ \exists (u',v'), \, u=wu' \mbox{ and } v=wv' \rbrace $. If $u \in t$, we let $\theta_u(t) = \lbrace v \in \W ; uv \in t \rbrace $. This is the subtree of $t$ which has $u$ as a root. A ternary tree $t$ is a planar tree such that $ \forall u \in t, \, k_u(t) \in \lbrace 0,3 \rbrace $, i.e. every internal node has exactly three children. 

We denote $\T$ the set of ternary trees, and henceforth we will simply call them trees. We denote $\T^{\infty}$ the infinite complete ternary tree, that is
\[ \T^{\infty} = \bigcup_{n \geq 0} \{1,2,3 \}^n.\]

It is a well known fact that $\T$ is mapped bijectively to the set of stack triangulations. However, compact triangulations contain more information than stack triangulations, since compact triangulations contain the information of where each internal vertex is placed. The additional information will be put at each vertex of the associated ternary tree, and will be the barycentric coordinates of the point associated with $u$, at the time it has been inserted. The idea is thus to associate with a point $M$ its triplet $\C(M)$ of barycentric coordinates with respect to $(A,B,C)$, taken to be with sum equal to $1$. As such $ \C(A)=[1,0,0], \ \C(B)=[0,1,0], \ \C(C)=[0,0,1]$. Equivalently, if the splitting of $T$ is given as in Figure \ref{split}, then $\C(M) = (P_1,P_2,P_3)$, where $(P_1,P_2,P_3)$ are the respective ratios of the areas of the triangles $MBC,AMC,AMB$ over the area of $ABC$.

\begin{figure}[!h]
\centerline{\includegraphics{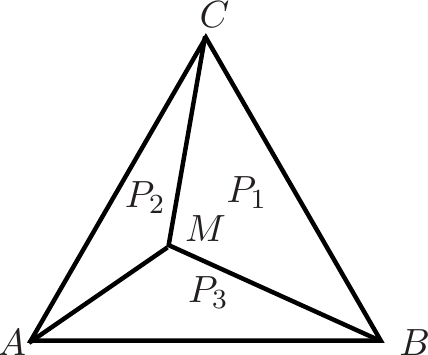}}
\caption{The splitting of a triangle via $P$}
\label{split}
\end{figure}
Write 
\[ V_{2} := \lbrace (x_1, x_2 ,x_3) \in \R^3; \ x_1, x_2 ,x_3 \geq 0 \mbox{ and } x_1+ x_2 +x_3=1 \rbrace\]
for the $3$-dimensional simplex, so that any point $M \in \tilde{T}$ corresponds bijectively to its (normalised) barycentric coordinates in $V_2$. Also, let
 \[ V_{2}^* := \lbrace (x_1, x_2 ,x_3) \in \R^3; \ x_1, x_2 ,x_3 > 0 \mbox{ and } x_1+ x_2 +x_3=1 \rbrace \]
be the $3$-dimensional simplex with its boundary removed.

\begin{de}\label{def_pltrees}
\begin{enumerate}
\item A \emph{fragmentation-labelled} tree is a pair $(t,(P(u))_{u \in t^0})$, $t \in \T$, such that for any $u \in t^0$, $P(u) \in V_2^*$, that is a tree $t$ and a set of triplets $P(u)$ indexed by the internal nodes of $t$. $P(u)$ is called the \emph{splitting triplet at $u$}.
We denote $\ft_n$ the set of fragmentation-labelled trees with $n$ internal vertices, and $\ft = \bigcup \ft_n$.
\item A \emph{coordinate-labelled} tree is a pair $(t,(\lambda(u))_{u \in t})$, $t \in \T$, with labels $\lambda(u)$ at each node $u$ such that: 
\begin{enumerate}[(a)]
\item For each leaf $l$ of the tree $t$, we have $\lambda(l) \in V_2^3$, and write $\C(l) = \lambda(l)$. The elements of $\C(l)$ are called the \emph{coordinates} of $l$.
\item For each internal node $u$ we have $\lambda(u) = (\C(u),P(u))$, with $\C(u) \in V_2^3$, and $P(u) \in V_2^*$. The elements of $\C(u)$ are called the \emph{coordinates} of $u$, and $P(u)$ is called the \emph{splitting triplet} at $u$.
\item The coordinates of the root are $\C(\emptyset)=([1,0,0],[0,1,0],[0,0,1])$.
\item If $\C(u)=(\C_1,\C_2,\C_3)$ and $P(u)=(P_1,P_2,P_3)$ then for $i\in \{1,2,3\}$ we have $\C(u(i))=(\tilde{\C}_j)_{j \in \{1,2,3\} }$ where $\tilde{\C}_j$ is equal to $ C(u).P(u) := P_1\C_1+P_2\C_2+P_3\C_3$ if $j=i$ and $\C_j$ otherwise. This property is illustrated in Figure \ref{loc_label}.
\end{enumerate}
We denote $\pt_n$ the set of coordinate-labelled trees with $n$ internal vertices and likewise $\pt = \bigcup \pt_n$. For $ t \bullet \in \pt$ we will denote $p(t \bullet) \in \T$ the underlying tree.
\end{enumerate}
\end{de}

\begin{figure}[!h]
\centerline{\includegraphics{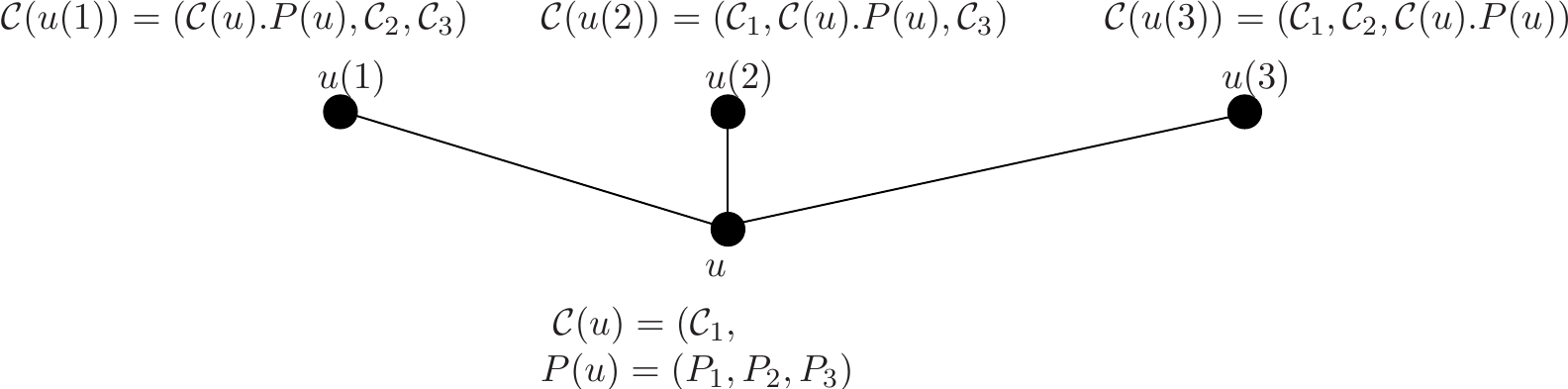}}
\caption{The local labelling rule (d) for coordinate-labelled trees}
\label{loc_label}
\end{figure}

\begin{rem}\label{proper_insertion}
The condition $P(u) \in V_2^*$ (as opposed to $V_2$) means that the insertions of new vertices at each step are \emph{proper} insertions, that is the new point is added in the interior of a face and not on its border. This is crucial for Theorem \ref{bij_drawing_markedtree}.
\end{rem}

\begin{rem}\label{def_Phi}
We can map a fragmentation-labelled tree $(t,(P(u))_{u \in t^0}) \in \ft_k$ to a coordinate-labelled tree $t \bullet \in \pt_k$ by setting $p(t \bullet) = t$, keeping for every internal node $u$ of $t \bullet$ the same splitting triplet $P(u)$ as in $t$, and filling in the remaining triplets of coordinates using rules (c) and (d). This gives us a bijective mapping which we denote $\Phi_k: \ft_k \rightarrow \pt_k$.
\end{rem}

Once more, we aim to define probability distributions on the sets $\ft$ and $\pt$. For this, we introduce a $\sigma$-field on the sets $\ft_k$ and $\pt_k$, with the help of a distance.

\begin{de}
Let $k \geq 0$. The map $d_C : \pt_k \times \pt_k \rightarrow \R_+ $ defined by
\[ d_C(t_1 \bullet,t_2 \bullet) = \mathds{1}_{p(t_1 \bullet) \neq p(t_2 \bullet)} + \mathds{1}_{p(t_1 \bullet) = p(t_2 \bullet)} \left( \left( \max_{u_1 \in p(t_1 \bullet), u_2 \in p(t_2 \bullet)} d(\lambda(u_1), \lambda(u_2)) \right) \wedge 1 \right), \]
where $\lambda(u)$ is the label of a node $u$ and $d$ represents any distance on the set of labels (seen as a subspace of $\R^i$ for some $i$), is a distance on $\pt_k$ (for usual reasons). We call it the \emph{coordinate-label distance}.
\end{de}

We define in an analogous manner a distance $d_F$ on $\ft_k$. The spaces $\pt_k$ and $\ft_k$ for $k \geq 0$ are equipped with the corresponding Borel $\sigma$-algebras. We can now state the main theorem of this section.

\begin{thm}\label{bij_drawing_markedtree}
Let $n \geq 0$. Equip the set $\pt_n$ with the coordinate-label distance $d_C$ and $\D_n$ with the Hausdorff distance $d_H$. Then there exists a homeomorphism
\[ \Psi_n : \pt_n \rightarrow \D_n \]
\[ \hspace{1cm} t \bullet \mapsto m ,\]
such that:
\begin{enumerate}
\item Each internal node $u$ of $t \bullet$ corresponds bijectively to an internal vertex $M$ of $m$. Moreover, if $ \lambda(u) = (\C(u),P(u))$ then the barycentric coordinates of the vertex $M$ with respect to $(A,B,C)$ are given by $\C(u).P(u)$.
\item Each leaf $l$ of $t \bullet$ corresponds bijectively to a face $f$ of $m$. Moreover, if $\C(l) = (\C_1,\C_2,\C_3)$ is the label at $l$ and the face $f$ is defined by the three vertices $(A_f,B_f,C_f)$ then $\C(A_f) = \C_1$, $\C(B_f) = \C_2$, $\C(C_f) = \C_3$.
\end{enumerate}
\end{thm}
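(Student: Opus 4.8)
\emph{Overview.} The plan is to build $\Psi_n$ by a recursion that mirrors simultaneously the inductive construction of $\D_n$ through the insertion map $\I$ and the decomposition of a ternary tree at its root, to read off properties (1) and (2) along the way, to exhibit the inverse explicitly from the nesting structure of faces, and finally to prove bicontinuity one ``tree-stratum'' at a time.

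\emph{Construction.} For $n=0$ both $\pt_0$ and $\D_0$ are singletons, and $\Psi_0$ sends the trivial labelled tree to $T$. For $n\geq 1$ and $t\bullet\in\pt_n$, the root is an internal node with coordinates $\C(\emptyset)=([1,0,0],[0,1,0],[0,0,1])$ and splitting triplet $P(\emptyset)$; let $M$ be the point of $\tilde T$ with barycentric coordinates $\C(\emptyset).P(\emptyset)=P(\emptyset)$, which lies in the interior of $T$ since $P(\emptyset)\in V_2^*$. Splitting $T$ along $M$ produces three triangles $f_1,f_2,f_3$, ordered as in Figure \ref{trg_order}, and by rules (c)--(d) the three subtrees hanging from the root are coordinate-labelled trees whose root coordinates are exactly the vertex triples of $f_1,f_2,f_3$; running the recursion inside each $f_i$ and adjoining the segments $[AM],[BM],[CM]$ defines $\Psi_n(t\bullet)$. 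An induction then yields everything at once: splitting a nondegenerate triangle at an interior point gives three nondegenerate triangles partitioning it, so at every stage the faces are genuine triangles in the correct number and the recursive step is literally an application of some $\I_{M'}$, whence $\Psi_n(t\bullet)\in\D_n$; and properties (1) and (2) are visible in the same induction, the internal node $u$ being sent to the vertex placed at barycentric coordinates $\C(u).P(u)$ when its subtree is processed, and each leaf to the face it bounds.

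\emph{Inverse.} Given $m\in\D_n$, order its faces by nesting, $f'\preceq f$ iff $\tilde{f'}\subseteq\tilde f$; this poset is the vertex set of a ternary tree $t$ (interior of $T$ = root, the three faces into which a non-leaf face was split = its children), which is the classical bijection between stack triangulations and ternary trees (see \cite{AM}, Proposition 1). Label a leaf $l\leftrightarrow f$ by $\C(l)=(\C(A_f),\C(B_f),\C(C_f))$ and an internal node $u\leftrightarrow f$ by that same triple together with $P(u)=$ the barycentric coordinates, relative to $(A_f,B_f,C_f)$, of the vertex of $m$ inserted into $f$; this vertex is strictly interior to $f$, so $P(u)\in V_2^*$. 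Rule (c) is the normalisation at $T$, and rule (d) is nothing but the vertex-ordering convention used to build $\D_{k+1}$ (the identities $M=A_{f_1}=B_{f_2}=C_{f_3}$, $B_{f_1}=B_f$, etc.) rewritten in barycentric coordinates, using that a point inserted via splitting triplet $P$ has barycentric coordinates $P_1\C(A_f)+P_2\C(B_f)+P_3\C(C_f)=\C(u).P(u)$ by affinity. Tracking the two recursions shows that this map and $\Psi_n$ are mutually inverse: the vertex positions of $m$ determine and are determined by the labels, and the nesting poset determines and is determined by the underlying tree.

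\emph{Bicontinuity and the main obstacle.} Because $d_C$ equals $1$ whenever the underlying trees differ, $\pt_n$ is the topological disjoint union of its strata $\pt_n^{(t)}$, and $\pt_n^{(t)}$ is parametrised, via rules (c)--(d), by the splitting triplets $(P(u))_{u\in t^0}\in(V_2^*)^{t^0}$; on it each vertex position $\C(u).P(u)$ is a polynomial function of these triplets, a segment depends continuously on its endpoints, and a finite union of sets depends continuously (for $d_H$) on the summands, so $\Psi_n$ is continuous. For $\Psi_n^{-1}$ it suffices to show that each stratum of $\D_n$ is open and that $\Psi_n$ restricts to a homeomorphism onto it: within a fixed stratum each internal vertex of $m$ is a branch point and these are precisely the points where $m$ fails to be locally a simple arc, so the vertices move continuously with $m$ for $d_H$, and recovering $P(u)$ from a vertex and its ambient face is the continuous (Cramer) solution of a nondegenerate linear system. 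The genuinely delicate point, which I expect to be the main obstacle, is the openness of the strata of $\D_n$: one must show that the combinatorial type of $m$ --- equivalently the nesting poset of its faces, equivalently the underlying ternary tree --- cannot change under an arbitrarily small Hausdorff perturbation inside $\D_n$. One argues that the $2n+1$ faces of a nearby $m'$ are $d_H$-close to those of $m$, that the strict non-inclusions $\tilde{f'}\not\subseteq\tilde f$ persist, that the number of faces is fixed, and that the poset must be a ternary split poset; these constraints together pin down the nesting order. The care needed is that inclusions themselves need not be ``stably strict'', since adjacent faces share boundary edges, so one must run the argument through the stable non-inclusions together with this combinatorial rigidity rather than through the inclusions directly.
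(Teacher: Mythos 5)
Your inductive construction of $\Psi_n$, the explicit inverse read off from the nesting poset of faces, and the verification of properties (1)--(2) run essentially parallel to the paper's proof, which assembles the bijection in the same step-by-step fashion (the paper removes a bottom-level internal node at each stage while you decompose at the root --- an inessential reformulation).

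The divergence, and the gap, is in the bicontinuity. The paper proves continuity of $\Psi_n$ by a uniform-continuity estimate on $(\C,P)\mapsto\C.P$ --- essentially your polynomial-dependence remark --- and then dispatches the inverse direction in one line, appealing to the fact that a continuous bijection between compact spaces is a homeomorphism. You instead try to prove continuity of $\Psi_n^{-1}$ directly, via openness of the strata of $\D_n$, which you yourself identify as ``the main obstacle''; but the obstacle is not cleared. You assert that the $2n+1$ complementary faces of a nearby $m'\in\D_n$ are Hausdorff-close to those of $m$, that the nesting posets must therefore coincide, and that the internal vertices (as branch points) vary continuously with $m$ in $d_H$; none of these is proved, and the first is precisely where the work lies, since Hausdorff proximity of two compact sets does not in general control the complementary components. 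You flag the subtlety about shared boundary edges honestly, but flagging it does not resolve it. As it stands, the bicontinuity half of your argument is a programme, not a proof.

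It is worth noting that your instinct to avoid the paper's compactness shortcut is reasonable: each stratum of $\pt_n$ is parametrised by $(V_2^*)^{t^0}$ with $V_2^*$ the open simplex, so $\pt_n$ is not compact, and $\D_n$ fails to be closed in the Hausdorff hyperspace of $\tilde{T}$ (a sequence of insertions drifting towards a face boundary has its $d_H$-limit outside $\D_n$). A direct openness argument along the lines you sketch is therefore what one would genuinely need here --- but it still has to be carried out.
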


\begin{rem}
Note that the spaces which are in one-to-one correspondence are both infinite, so that it is not the existence of the bijection as such which is of interest, but the fact that via this bijection all relevant information on a compact triangulation can be read in a coordinate-labelled tree. The measurability of the bijection will allow us to transport distributions.
\end{rem}

\begin{de}\label{def_triangles}
For a node $u$ in a coordinate-labelled tree $t \bullet \in \pt$, we define $T(u)$ to be the triangle whose three points are given by the triplet of coordinates $\C(u)$, and $\tilde{T}(u)$ for the filled triangle. This is illustrated in Figure \ref{index_trg}.
\end{de}

\begin{figure}[!h]
\centerline{\includegraphics{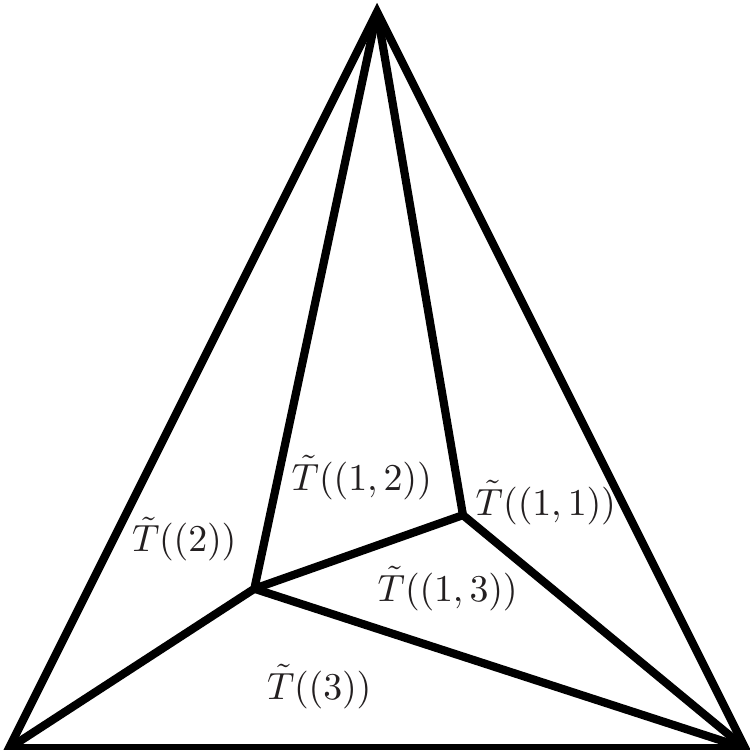}}
\caption{The indexation of triangles}
\label{index_trg}
\end{figure}

\subsection{Proof of Theorem \ref{bij_drawing_markedtree}}

We proceed by induction on $n$. We follow a similar path to the proof of Proposition 1 in \cite{AM}, by constructing the bijection iteratively. For $n=0$ there is no work to do. We have $\D_0= \{ T \}$, $ \pt_0 = \{ \{ \emptyset \}, \C(\emptyset) \}$. By property (c) of Definition \ref{def_pltrees} the coordinates $\C(\emptyset)$ satisfy part 2 of the theorem, as desired.

Now assume we have constructed $\Phi_n$ as in the statement of Theorem \ref{bij_drawing_markedtree}, for some $n \geq 0$. Let $ t \bullet \in \pt_{n+1} $. Denote $t = p( t \bullet)$ and choose a node $u \in t$ such that $u(1),u(2),u(3)$ are leaves of $t$. Now define $t' := t \setminus \{u(1),u(2),u(3),\}$, and $t' \bullet$ to be the coordinate-labelled tree such that its labels coincide with those of $t \bullet$ except at $u$, and where we remove the splitting triplet $P(u) = (P_1,P_2,P_3)$, as $u$ is now a leaf of $t'$. Thus $t' \bullet \in \pt_n$ and by induction we can define $m' := \Psi_n(t') \in \D_n$. Let $f$ be the face of $m'$ corresponding to the leaf $u$ via $\Psi_n$. Write as in the statement of the theorem $(A_f,B_f,C_f)$ for the three vertices defining $f$.

Now let $M$ be the point in $f$ whose barycentric coordinates with respect to $(A_f,B_f,C_f)$ are $(P_1,P_2,P_3)$, and define $m = \Psi_{n+1}(t \bullet)= m' \cup [A_f,M] \cup [B_f,M] \cup [C_f,M]$. It follows that the barycentric coordinates of $M$ with respect to $(A,B,C)$ are $P_1 \C(A_f) + P_2 \C(B_f) + P_3 \C(C_f) = P(u).\C(u)$ by property 2 of the induction hypothesis applied to $u$ in $t'$. Thus, by mapping $u$ to $M$ and all other internal nodes of $ t \bullet $ to their corresponding internal vertex via $\Psi_n$, we see that $\Psi_{n+1}$ satisfies condition 1 of Theorem \ref{bij_drawing_markedtree}. To satisfy condition 2, we map all the leaves of $t'$ except $u$ to their corresponding faces via $\Psi_n$, noting that these faces are untouched by $\Psi_{n+1}$ so that the condition remains satisfied. Finally, we map the leaves $u(1),u(2),u(3)$ respectively to the faces $MB_fC_f,A_fMC_f,A_fB_fM$ of $m$. Because of the local growth property (d) of Definition \ref{def_pltrees}(2), we see that condition 2 is satisfied for these leaves. This iterative construction is illustrated in Figure \ref{loc_bij}.

\begin{figure}[!h]
\centerline{\includegraphics{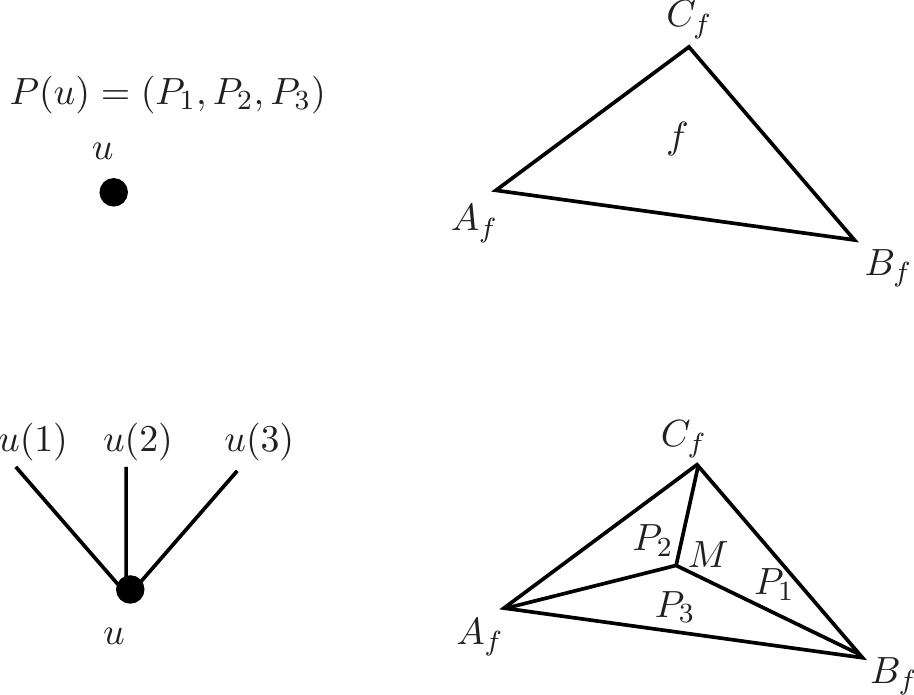}}
\caption{Illustrating the growth property of the bijection $\Psi_n$}
\label{loc_bij}
\end{figure}

\medskip

Two points remain. Firstly, that $\Psi_{n+1}$ is a bijection. But this follows from our construction and the definition of $\D_n$. Indeed $\D_{n+1}$ is obtained from $\D_n$ through the insertion of a vertex anywhere in a given face of an element of $\D_n$, while we have a similar iterative structure for coordinate-labelled trees. It is important here that each vertex is inserted in the interior of some face, and not on it's boundary (since the splitting triplets are in $V_2^*$ and not just in $V_2$), so that the face it is inserted in is defined non ambiguously.

The final point is to prove that $\Psi_{n}$ is bicontinuous with respect to the given distances. For this, we fix some $m \in \D_n$ and $\varepsilon > 0$. Write $t\bullet = \Psi_n^{-1}(m)$. Now there exists $\eta > 0$ such that for any $(\C=(\C_1,C_2,C_3),P=(P_1,P_2,P_3)),(\C'=(\C_1',C_2',C_3'),P'=(P_1',P_2',P_3')) \in V_2^3 \times V_2^*$, if $\|(\C,P) - (\C',P')\| < \eta$ then $\| \C.P - \C'.P' \| < \varepsilon$. We may suppose that $\eta < 1$, so that if $d_C(t \bullet ', t \bullet) < \eta$ we have $p(t \bullet ') = p( t \bullet)$. This implies that if $d_C(t \bullet ', t \bullet) < \eta$ then for any vertex $u \in p( t \bullet)$ the corresponding vertex in $m$ is at distance less than $\varepsilon$ from the corresponding vertex in $m' := \Psi_n(t \bullet ')$. As a consequence, we get that $d_H(m',m) < \varepsilon$ and the continuity of $\Psi_n$ is proved. The bicontinuity stems immediately from the fact that it is a mapping between compact spaces.\qed

\subsection{Introducing randomness}

So far, we have worked in a purely deterministic setting. In this paragraph, we formally introduce the two probability distributions on $D_k$ which will be of interest to us.

\begin{de}
A \emph{splitting law} $\nu$ is a distribution on $\R_+^3$ such that if $P=(P_1,P_2,P_3)$ is distributed according to $\nu$, then:
\begin{enumerate}
\item For any permutation $\sigma$ on $\lbrace 1,2,3 \rbrace$, $(P_{\sigma(1)},P_{\sigma(2)},P_{\sigma(3)})$ has same distribution as $(P_1,P_2,P_3)$, that is the law of $\nu$ is symmetric.
\item For any $i \in \lbrace 1,2,3 \rbrace$, $P_i >0$ a.s..
\item $P_1 + P_2 + P_3 = 1$ a.s..
\end{enumerate}
We denote $\ms$ the set of splitting laws, and say that a random variable $P=(P_1,P_2,P_3)$ is a \emph{splitting ratio} if its distribution is a splitting law.
\end{de}

Fix some $n \geq 0$. We define two probability distributions on $\T_n$.
\begin{itemize}
\item The first, which we denote $\mathbb{U}_n^{\T}$, is the uniform distribution on $\T_n$.
\item The second, which we denote $\mathbb{H}_n^{\T}$, is defined by induction. For $n = 0$, the distribution $\mathbb{H}_0^{\T}$ takes value the unique tree reduced to its root $\{ \emptyset \}$ a.s.. Now suppose we have defined a distribution $\mathbb{H}_{n-1}^{\T}$ on $\T_{n-1}$. Choose $t \in \T_{n-1}$ according to $\mathbb{H}_{n-1}^{\T}$. Conditionally to $t$, choose one of its $2n -1$ leaves uniformly at random, and replace that leaf by an internal node with three children. This gives us a probability distribution $\mathbb{H}_n^{\T}$ on $\T_n$. Note that the weight of a tree is proportional to the number of histories leading to its construction (starting from a single root node).
\end{itemize}

We say that a random variable $t \in \T$ is an \textit{increasing tree} if it has distribution $\mathbb{H}_n^{\T}$ for some $n \geq 0$.

\begin{de}
Let $\nu \in \ms$ be a splitting law, and $(P(u))_{u \in \T^{\infty}}$ be an i.i.d. sequence of random variables with law $\nu$. Let $n \geq 0$.
\begin{enumerate}
\item We denote $\mathbb{U}_n^{\T,\nu}$ (resp. $\mathbb{H}_n^{\T,\nu}$) the distribution of $t^P_n \bullet := \Phi_n (t_n,(P(u))_{u \in t_n^0}) $ where $t_n \in T_n$ is independent from $(P(u))_{u \in \T^{\infty}}$ and has distribution $\mathbb{U}_n^{\T}$ (resp. $\mathbb{H}_n^{\T}$), and $\Phi_n$ is as in Remark \ref{def_Phi}.
\item We define the distributions $\up_n$ and $\qp_n$ to be the respective images of the distributions $\mathbb{U}_n^{\T,\nu}$ and $\mathbb{H}_n^{\T,\nu}$ via the bijection $\Psi_n$ of Theorem \ref{bij_drawing_markedtree}. These are therefore two probability distributions on $\D_n$.
\end{enumerate}
\end{de}

\section{The uniform model}\label{unif}

In this section, we study the asymptotic behaviour of the distribution $\up_n$. That is, we look at random compact triangulations where the underlying stack triangulation is chosen uniformly, and the insertion of vertices is done according to some splitting law $\nu \in \ms$, independent from the choice of the underlying triangulation. We study both the occupation measure, and the asymptotic behaviour of the distribution itself.

\subsection{The occupation measure}\label{sectionUOM}

\begin{thm}\label{UOM}
Let $(m_n)_n$ be a sequence of random compact triangulations, where $m_n$ has distribution $\up_n$. Recall (Definition \ref{def_V}) that $\V(m_n)$ denotes the set of internal vertices of $m_n$. For every $n$, conditionally to $ m_n $, let $U_n$ be a vertex of $\V(m_n)$, chosen uniformly at random. Finally, let $\mu_n$ be the occupation measure of $m_n$, as in \eref{def_OM}. Then
\begin{enumerate}
\item The random point $U_n$ converges in distribution to some random limit point $U_{\infty}$ as $n$ tends to infinity.
\item We have 
\[ \mu_n \cvd \delta_{U_{\infty}} \quad \mbox{as } n \rightarrow \infty,\]
where the convergence is in distribution on the space of probability measures on the filled triangle $\tilde{T}$.
\end{enumerate}
\end{thm}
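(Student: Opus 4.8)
# Proof Plan for Theorem \ref{UOM}

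The plan is to work entirely on the tree side via the bijection $\Psi_n$ of Theorem \ref{bij_drawing_markedtree}, since under $\mathbb{U}_n^{\T,\nu}$ the underlying tree $t_n$ is a uniform ternary tree with $n$ internal nodes, for which the structure near a uniformly chosen node is well understood. A uniformly chosen internal vertex $U_n$ of $m_n$ corresponds to a uniformly chosen internal node $u_n$ of $t_n$, and by Theorem \ref{bij_drawing_markedtree}(1) its position is $\C(u_n).P(u_n)$, where $\C(u_n)$ is determined by the sequence of splitting triplets $P(\emptyset), P(v_1), \dots$ read along the ancestral path $\emptyset = v_0, v_1, \dots, v_{|u_n|} = u_n$, together with the turns taken at each step (i.e. which of the three children is followed). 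The key probabilistic input is that for a uniform ternary tree with $n$ internal nodes, the ancestral line from the root to a uniformly chosen node, \emph{decorated with the turn directions}, converges in distribution (as $n \to \infty$) to an infinite random sequence of i.i.d.\ uniform turns in $\{1,2,3\}$ — this is the analogue of the classical fact for uniform plane/binary trees and follows from a generating-function / singularity-analysis computation (the height of a uniform node is of order $\sqrt{n}$, but only the \emph{first few} turns matter for the limit of the position, because the nested triangles shrink).

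So the first step is to establish that $(\text{turn}_1, \text{turn}_2, \dots)$ along the path to $u_n$ converges in distribution to an i.i.d.\ uniform sequence $(J_1, J_2, \dots)$ on $\{1,2,3\}$. Second, attach to this the i.i.d.\ splitting triplets $(P_1, P_2, \dots)$ with law $\nu$ (these are independent of the tree and of the turns), and define $U_\infty$ as the almost sure limit of the nested filled triangles $\tilde{T}(v_0) \supseteq \tilde{T}(v_1) \supseteq \cdots$: since each splitting triplet lies in $V_2^*$ and $\nu$ is a fixed law with $P_i > 0$ a.s., the diameter of $\tilde{T}(v_k)$ contracts geometrically in expectation (one can check $\E[\log(\text{ratio of consecutive diameters})] < 0$), so $\bigcap_k \tilde{T}(v_k)$ is a.s.\ a single point $U_\infty$. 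Third, conclude part 1 by a continuity/coupling argument: $U_n = \C(u_n).P(u_n)$ lies in $\tilde{T}(v_{|u_n|-1})$ — wait, more precisely it lies in the triangle indexed by the parent of $u_n$ along the path — and since the first $K$ turns of the path to $u_n$ agree in law (in the limit) with $(J_1, \dots, J_K)$ while $\tilde{T}(v_K)$ has small diameter for $K$ large, $U_n$ converges in distribution to $U_\infty$. A clean way to phrase this: for fixed $K$, $U_n$ lies within $\mathrm{diam}(\tilde{T}(v_K))$ of the point determined by the first $K$ turns and splittings, and this error goes to $0$ in probability as $K \to \infty$ uniformly in $n$.

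For part 2 I would use a second-moment / de Finetti–type argument: to show $\mu_n \cvd \delta_{U_\infty}$ it suffices (testing against bounded continuous $g$ on $\tilde T$) to show that $\langle \mu_n, g \rangle = \frac1n \sum_{x \in \V(m_n)} g(x)$ concentrates, i.e.\ that it has the same limit in distribution as $g(U_n)$ AND that two \emph{independently} uniformly chosen internal vertices $U_n, U_n'$ of the \emph{same} $m_n$ have highest common ancestor at bounded height (in probability), so that they fall into the same small triangle $\tilde{T}(v_K)$ and hence $g(U_n) - g(U_n') \to 0$. Concretely: $\Var(\langle \mu_n, g\rangle \mid m_n)$-type reasoning reduces everything to controlling the law of $u_n \wedge u_n'$ for two i.i.d.\ uniform nodes of a uniform ternary tree; the standard fact is that this meeting height is $O(1)$ (tight in $n$), which again comes from singularity analysis of ternary-tree generating functions. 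Combining: $\langle \mu_n, g \rangle$ and $g(U_n)$ have the same distributional limit, namely $g(U_\infty)$, and since this holds for all $g \in C(\tilde T)$ (and $\tilde T$ is compact, so weak convergence of measures is metrizable and it suffices to test a countable dense family), we get $\mu_n \cvd \delta_{U_\infty}$.

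The main obstacle is the first step — proving convergence of the decorated ancestral line of a uniformly chosen node in a uniform ternary tree — together with the companion estimate that two uniform nodes meet at $O(1)$ height. These are "classical in spirit" (they parallel the Aldous CRT picture and the results of Albenque–Marckert \cite{AM} on local convergence of $\mathbb{U}_n^{\T}$) but require either an explicit generating-function computation for ternary trees or an appeal to the local-convergence results the paper already references; one must be careful that the turn directions (not just the shape of the spine) converge, and that the independence between the tree and the i.i.d.\ family $(P(u))$ is used to decouple the geometry (splittings) from the combinatorics (turns). Everything downstream — the geometric contraction of nested triangles, the a.s.\ existence of $U_\infty$, and the passage from part 1 to part 2 — is then routine.
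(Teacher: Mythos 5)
Your overall strategy — pass to the tree side via $\Psi_n$, use the fact that the ancestral line to a uniformly chosen node is an i.i.d.\ uniform sequence of turns, show the nested triangles along that line collapse to a point $U_\infty$, and then upgrade to the occupation measure by a variance bound driven by tightness of the meeting height of two independent uniform nodes — is exactly the route the paper takes (Fact \ref{unif_point}, Propositions of Section~\ref{sectionUOM}, Lemma~\ref{2points}, and the $\Var(\langle f,\mu_n\rangle - f(U_n))\to 0$ computation). Two remarks on details, one of which is a genuine gap.

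First, a minor overcomplication: you invoke ``a generating-function / singularity-analysis computation'' for the convergence of the decorated spine and for the $O(1)$ meeting height. Neither is needed in the form you describe. The paper uses the exact finite-$n$ statement (Fact~\ref{unif_point}): conditionally on the height of the chosen node, the turns are already i.i.d.\ uniform on $\{1,2,3\}$ for every $n$, by conditional independence of the root subtrees of a uniform ternary tree. The only asymptotic input is that the height goes to infinity in probability, and that the meeting height of two independent uniform nodes is tight; both are cited as known, and neither requires you to prove a decorated-spine scaling limit.

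Second, and this is the real gap: your justification for the collapse of the nested triangles $\tilde T(v_0)\supseteq\tilde T(v_1)\supseteq\cdots$ to a single point is the assertion that ``the diameter of $\tilde T(v_k)$ contracts geometrically in expectation (one can check $\E[\log(\text{ratio of consecutive diameters})]<0$).'' This is not a well-posed i.i.d.\ statement: the ratio of Euclidean diameters between $\tilde T(v_{k+1})$ and $\tilde T(v_k)$ depends on the \emph{shape} of the current triangle, not only on the splitting triplet and the turn, so the sequence of log-ratios is neither i.i.d.\ nor stationary, and a single step need not contract at all (inserting a vertex near one corner leaves one of the three children with essentially the full diameter of the parent). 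You would need to either work in a shape-invariant quantity (e.g.\ the Hilbert projective metric on the simplex, in which the matrices $M^{(i)}_\nu$ with $P\in V_2^*$ are strict contractions) or do what the paper does: observe that the barycentric coordinates of the three corners of $\tilde T(v_k)$ are the three rows of the product $M_k\cdots M_1$ of i.i.d.\ random stochastic matrices, use monotonicity of the nested triangles to extract a convergent subsequence, and then argue by contradiction (Lemma~\ref{subseq} and Lemma~\ref{small_diam}) that any subsequential limit triangle must be degenerate, forcing the common limit of the rows. That argument needs no rate estimate and works for every splitting law in $\ms$. Everything downstream in your plan (definition of $U_\infty$, the uniform continuity/variance argument for part~2) is fine once this step is repaired.
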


Theorem \ref{UOM} says is that in the uniform model, all the vertices of the compact triangulation are at the same place, except for a portion which tends to $0$. Although point 2 is stronger than point 1, we state both here as point 1 will be heavily used in the proof of point 2. In Figure \ref{simu1} is a simulation of the vertices of the map $m_n$ where we take for $\nu$ the special case $\nu =  \delta_{\left(\frac13,\frac13,\frac13\right)}$, and $n \sim 10 000$. We can see that the vertices are indeed concentrated at one place.

\begin{figure}[!h]
\centerline{\includegraphics{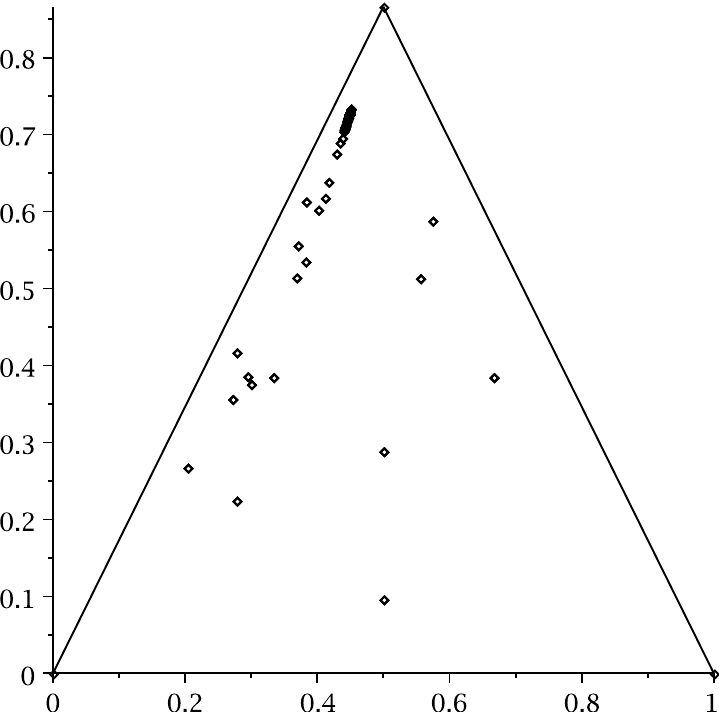}}
\caption{A simulation of the set $\V(m_n)$ where $m_n$ has distribution $\up_n$ and $n \sim 10 000$}
\label{simu1}
\end{figure}

\subsubsection{Proof of Theorem \ref{UOM}.(1)}

We begin by recalling an elementary fact about uniform ternary trees.

\begin{fact}\label{unif_point}
Take $U_n$ as in the statement of Theorem \ref{UOM}, and write $U'_n$ for the corresponding node in the coordinate-labelled tree $t\bullet_n := \Psi_n^{-1}(m_n)$, as in Theorem \ref{bij_drawing_markedtree}. Write $U'_n=(u_1,u_2,\cdots,u_h)$ where $h$ is the height of $U'_n$. Then conditionally to $h$, the random variables $u_1,u_2,...,u_h$ are i.i.d, and are uniformly distributed on $\lbrace 1,2,3 \rbrace$.
\end{fact}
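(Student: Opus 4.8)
The plan is to combine two ingredients. First, a uniformly random internal vertex $U_n$ of $m_n$ corresponds, under the bijection $\Psi_n^{-1}$ of Theorem~\ref{bij_drawing_markedtree}, to a uniformly random internal node $U'_n$ of a uniform ternary tree $t_n$ with $n$ internal nodes (the randomness coming from $\mathbb{U}_n^{\T}$; the splitting triplets are irrelevant for this combinatorial statement). Second, I claim that for a uniform ternary tree with $n$ internal nodes, a uniformly chosen internal node, conditioned on having height $h$, is uniformly distributed among the $3^h$ nodes of $\{1,2,3\}^h$ that can occur as internal nodes; once this is known, writing $U'_n = (u_1,\dots,u_h)$, uniformity on $\{1,2,3\}^h$ is exactly the assertion that conditionally on $h$ the coordinates $u_1,\dots,u_h$ are i.i.d.\ uniform on $\{1,2,3\}$.

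To prove the conditional uniformity, I would argue by a symmetry/exchangeability argument. Fix $h\ge 0$ and fix two words $w, w' \in \{1,2,3\}^h$. There is an automorphism of the infinite complete ternary tree $\T^{\infty}$ — obtained by composing, along the path from the root, permutations of the three subtrees hanging at each node — that maps $w$ to $w'$ and fixes the root. This automorphism induces a bijection of the set $\T_n$ of ternary trees with $n$ internal nodes onto itself (it just relabels children at each level, which preserves being a valid ternary tree and preserves the number of internal nodes), and under this bijection the event ``$w$ is an internal node of $t$'' is carried to ``$w'$ is an internal node of $t$''. Since $\mathbb{U}_n^{\T}$ is the uniform measure on $\T_n$, it is invariant under this bijection. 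Hence, for the pair $(t_n, U'_n)$ with $t_n$ uniform and $U'_n$ a uniform internal node of $t_n$, the probability that $U'_n = w$ equals the probability that $U'_n = w'$, provided both have the same height $h$. Summing over the relevant words gives $\p(U'_n = w \mid |U'_n| = h) = 3^{-h}$ for every internal node $w$ of height $h$ that is achievable, which is the claim.

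Alternatively, and perhaps more cleanly, one can phrase the uniform choice of $U'_n$ as: pick $t_n$ uniform in $\T_n$, then pick $U'_n$ uniform in $t_n^0$. The joint law of $(t_n, U'_n)$ is uniform on the set of pairs $\{(t,u) : t \in \T_n,\ u \in t^0\}$ weighted by $1/|t^0| = 1/n$, i.e.\ in fact uniform on pairs since $|t^0| = n$ for all $t \in \T_n$. So $(t_n, U'_n)$ is uniform on $\{(t,u): t\in\T_n, u\in t^0\}$. The level-wise relabelling automorphisms above act on this set, fixing the first marginal's support and permuting the second coordinate transitively within each height class, whence the conditional law of $U'_n$ given its height is uniform on each height class; descending to coordinates gives the i.i.d.\ uniform statement.

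The main obstacle is purely one of bookkeeping: making precise that the level-wise child-relabelling maps are well-defined bijections of $\T_n$ and act transitively on $\{w \in \{1,2,3\}^h : w \in t^0 \text{ for some } t\in\T_n\}$ for each $h$, and that this transitivity plus uniformity of $\mathbb{U}_n^{\T}$ forces conditional uniformity. None of this is deep — it is the standard exchangeability of the ancestral line in a Galton–Watson / simply generated tree conditioned on its size — but it should be stated carefully so that the reduction to i.i.d.\ coordinates is unambiguous. I expect the write-up to be short.
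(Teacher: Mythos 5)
Your proof is correct, but it follows a genuinely different route from the one in the paper. The paper invokes the recursive subtree decomposition of uniform simply generated trees: conditionally on their sizes, the three root subtrees $\theta_{(1)}(t_n),\theta_{(2)}(t_n),\theta_{(3)}(t_n)$ of a uniform tree are independent and themselves uniform; independence of the $(u_i)$ is then deduced from this recursion, and uniformity of each $u_i$ on $\{1,2,3\}$ from the symmetry of $\mathbb{U}_n^{\T}$. You instead argue globally: the joint law of $(t_n,U'_n)$ is uniform on $\{(t,u): t\in\T_n,\, u\in t^0\}$ (using $|t^0|=n$ for all $t\in\T_n$), and the group of ``level-wise'' automorphisms of $\T^{\infty}$ (composed child-permutations along the ancestral path) acts on this set measure-preservingly and transitively on words of a fixed height $h$, which forces $\p(U'_n=w\mid |U'_n|=h)=3^{-h}$ for every $w\in\{1,2,3\}^h$ — and this is exactly the i.i.d.\ uniform statement in coordinate form. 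Your route proves the conditional-uniformity claim directly in one step rather than splitting it into ``i.i.d.''\ (from recursion) and ``uniform marginal'' (from symmetry); it avoids unpacking the recursion, at the small cost of having to verify that child-relabelling really does give a well-defined bijection of $\T_n$ preserving internal-node status. Both arguments are short, correct, and ultimately rest on the symmetry of the uniform law on $\T_n$; yours is the more self-contained of the two.

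One minor remark: the phrase ``the $3^h$ nodes of $\{1,2,3\}^h$ that can occur as internal nodes'' is slightly misleading, since for each $h$ either all $3^h$ words of that height can occur as an internal node of some $t\in\T_n$ (namely when $h\le n-1$) or none can; the qualifier is vacuous, though harmless, once one conditions on $h$ being an attainable height.
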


\begin{proof}
By construction of the law $\up_n$, the tree $t_n := p(t\bullet_n)$ follows the uniform distribution on $\T_n$. We now use the following argument. If $t$ is a random ternary tree, chosen uniformly among trees of a given size, then conditionally to their sizes the subtrees at the root $\theta_{(1)}(t),\theta_{(2)}(t),\theta_{(3)}(t)$ are independent, and also follow the uniform distribution. It immediately follows that the $(u_i)$ are i.i.d, and the fact that the law of $u_1$ is uniform on $\lbrace 1,2,3 \rbrace$ stems from the symmetric nature of the uniform distribution in $\T_n$.
\end{proof}

By definition of a coordinate-labelled tree we have the following: let $u$ be an internal node in a coordinate-labelled tree $t$, with coordinates $\C(u)=(\C_1,\C_2,\C_3)$ and splitting triplet $P(u)=(P_1,P_2,P_3)$, then:
\[ \forall i \in \lbrace 1,2,3 \rbrace, \quad \C(u(i))^T=M_{\nu}^{(i)}.\C(u)^T,\]
where $M_{\nu}^{(i)}$ is the three-by-three identity matrix in which the $i$-th line is replaced by $P(u)$, i.e.
\beq\label{defM}
M_{\nu}^{(1)} = \left( \begin{array}{ccc}
P_1 & P_2 & P_3 \\
0 & 1 & 0 \\
0 & 0 & 1
\end{array} \right), \quad
M_{\nu}^{(2)} = \left( \begin{array}{ccc}
1 & 0 & 0 \\
P_1 & P_2 & P_3 \\
0 & 0 & 1
\end{array} \right), \quad
M_{\nu}^{(3)} = \left( \begin{array}{ccc}
1 & 0 & 0 \\
0 & 1 & 0 \\
P_1 & P_2 & P_3
\end{array} \right).
\eq
Henceforth, we will leave out the subscript $\nu$ wherever there is no risk of confusion. Combining this and Fact \ref{unif_point} gives us the following result.

\begin{pro}
Let $m_n,U_n$ be as in the statement of Theorem \ref{UOM}. Write $U'_n$ for the corresponding node in the coordinate-labelled tree $t\bullet_n := \Psi_n^{-1}(m_n)$, and $\C(U'_n)=(\C_1(U'_n),\C_2(U'_n),\C_3(U'_n))$ for the coordinates of $U'_n$. Then for $i \in \{ 1,2,3 \}$, conditionally to $h$ the height of $U'_n$, the law of $\C_i(U'_n)$ is given by the $i$-th row of the product $M_h \cdots M_1$ where the $M_j$ are i.i.d random variables with law $\frac{1}{3} \delta_{M^{(1)}} + \frac{1}{3} \delta_{M^{(2)}} + \frac{1}{3} \delta_{M^{(3)}}$ (the $M^{(k)}$ being defined as in \eref{defM}).
\end{pro}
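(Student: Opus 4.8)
The plan is to read $\C(U'_n)$ off the ancestral line of $U'_n$ through the coordinate recursion, and then to combine Fact~\ref{unif_point} with the independence between the underlying tree and the family of splitting triplets in order to identify the law of the matrices that appear.

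First I would introduce the ancestors of $U'_n$: write $v_0 = \emptyset$ and $v_j = (u_1,\dots,u_j)$ for $1 \le j \le h$, so that $v_h = U'_n$ and $v_j = v_{j-1}(u_j)$. Since $U'_n$ is an internal node of $t_n := p(t\bullet_n)$, so are all of $v_0,\dots,v_{h-1}$, and hence their splitting triplets $P(v_0),\dots,P(v_{h-1})$ are defined. Rewriting property~(d) of Definition~\ref{def_pltrees} in the matrix form given just above \eqref{defM}, namely $\C(v_j)^T = M^{(u_j)}\,\C(v_{j-1})^T$ with $M^{(u_j)}$ built from the splitting triplet $P(v_{j-1})$, and using $\C(\emptyset)^T = I_3$ (property~(c)), I obtain
\[ \C(U'_n)^T = M_h M_{h-1}\cdots M_1, \qquad M_j := M^{(u_j)}_{P(v_{j-1})}, \]
so that $\C_i(U'_n)$ is the $i$-th row of this product. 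It then suffices to show that, conditionally to $h$, the matrices $M_1,\dots,M_h$ are i.i.d.\ with the mixture law $\tfrac13\delta_{M^{(1)}}+\tfrac13\delta_{M^{(2)}}+\tfrac13\delta_{M^{(3)}}$, the $M^{(k)}$ being built from a splitting triplet of law $\nu$ as in \eqref{defM}.

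Next I would make explicit the independence structure. By construction $t\bullet_n = \Phi_n(t_n,(P(u))_{u\in t_n^0})$ with $t_n$ of law $\mathbb{U}_n^{\T}$ independent of the i.i.d.\ family $(P(u))_{u\in\T^{\infty}}$; and the uniform choice of a vertex of $m_n$ corresponds, through Theorem~\ref{bij_drawing_markedtree}(1), to a uniform choice of an internal node $U'_n$ of $t_n$, made independently of the labels. Hence $(t_n,U'_n)$ — in particular the word $(u_1,\dots,u_h)$ and the list of distinct ancestor nodes $(v_0,\dots,v_{h-1})$ — is independent of $(P(u))_{u\in\T^{\infty}}$. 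Now condition on $h$. By Fact~\ref{unif_point} the $u_1,\dots,u_h$ are i.i.d.\ uniform on $\{1,2,3\}$; combining this with the independence just noted and with the fact that $v_0,\dots,v_{h-1}$ are distinct nodes, the pair $\big((u_1,\dots,u_h),(P(v_0),\dots,P(v_{h-1}))\big)$ has the law of $h$ i.i.d.\ uniform picks in $\{1,2,3\}$ together with $h$ independent $\nu$-samples, the two blocks being mutually independent. Therefore the pairs $(u_j,P(v_{j-1}))$, $1\le j\le h$, are i.i.d., each distributed as $(\varepsilon,P)$ with $\varepsilon$ uniform on $\{1,2,3\}$, $P\sim\nu$ and $\varepsilon\perp P$. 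Pushing this forward by the deterministic map $(\varepsilon,P)\mapsto M^{(\varepsilon)}_P$ shows that the $M_j$ are i.i.d.\ with the stated law, which together with the first step gives the proposition.

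I expect the only delicate point to be the independence between the combinatorial data $(t_n,U'_n)$ and the splitting triplets, together with the associated bookkeeping: the matrices entering the product are indexed by the $h$ ancestors lying \emph{strictly} above $U'_n$ (so that $P(U'_n)$ itself never appears), and one must check that conditioning on $h$ introduces no dependence between the $u_j$'s and these particular triplets — which it does not, precisely because $(P(u))_u$ is i.i.d.\ over the whole infinite tree and independent of $(t_n,U'_n)$. The remaining ingredients, the matrix form of the coordinate recursion and the i.i.d.\ uniform structure of $(u_1,\dots,u_h)$, are already available from \eqref{defM} and Fact~\ref{unif_point}.
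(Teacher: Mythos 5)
Your proof is correct and takes essentially the same route as the paper, which derives the proposition by simply ``combining'' the matrix form of rule~(d) with Fact~\ref{unif_point}; you supply the bookkeeping the paper leaves implicit, namely the identification of the ancestral product $\C(U'_n)^T=M_h\cdots M_1$ with $M_j=M^{(u_j)}_{P(v_{j-1})}$, and the verification that conditionally on $h$ the pairs $(u_j,P(v_{j-1}))$ are i.i.d.\ with the product law, using the independence of $(t_n,U'_n)$ from the i.i.d.\ family $(P(u))_u$ and the distinctness of the ancestors $v_0,\dots,v_{h-1}$.
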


Now to get the desired convergence of $U_n$, it is of course sufficient to show the convergence of the sequence of coordinates $(\C_n)_{n\geq0}$ where $\C_n$ is the barycentric coordinates of the point $U_n$. By Theorem \ref{bij_drawing_markedtree}(1) the law of $\C_n$ is $P_1\C_1(U'_n)+P_2\C_2(U'_n)+P_3\C_3(U'_n)$ where $P=(P_1,P_2,P_3)$ is a splitting ratio with distribution $\nu$, independent from $\C(U'_n)$. 
The previous proposition gives us the law of $\C(U'_n)$. Moreover, for any $A>0$, $\p( \vert U'_n \vert \geq A )$ tends to zero as $n$ goes to infinity. Thus, to prove Theorem \ref{UOM}, it is sufficient to show the following.

\begin{pro}\label{mat_prod}
Let $(M_i)_{i\geq 1}$ be i.i.d. random variables with law $\frac{1}{3} \delta_{M^{(1)}} + \frac{1}{3} \delta_{M^{(2)}} + \frac{1}{3} \delta_{M^{(3)}}$. Then the product $S_n:=M_n \cdots M_1$ converges a.s. as $n \rightarrow \infty$ to some random matrix $S$ whose three lines are identical.
\end{pro}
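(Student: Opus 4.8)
The plan is to exploit the special structure of the matrices $M^{(k)}$: each is a stochastic matrix (rows summing to $1$, non-negative entries) obtained from the identity by replacing one row with the splitting triplet $P=(P_1,P_2,P_3)$. Since $P_i>0$ a.s., each $M^{(k)}$ has strictly positive entries on the replaced row and exactly the standard basis vectors on the other two rows. The product $S_n=M_n\cdots M_1$ is again a stochastic matrix, so each row of $S_n$ lives in the simplex $V_2$. I would first observe that $S_{n+1}=M_{n+1}S_n$, so the rows of $S_{n+1}$ are obtained from the rows of $S_n$ by: two of them are simply copied from $S_n$ (the rows where $M_{n+1}$ is the identity), and the third is replaced by the convex combination $P_1 r_1 + P_2 r_2 + P_3 r_3$ of the three rows $r_1,r_2,r_3$ of $S_n$, where $P$ is the splitting triplet of $M_{n+1}$. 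Geometrically, the three rows of $S_n$ are three points in the simplex, and at each step one of the three points (chosen uniformly) is moved to a fixed-proportion barycentre of the current three points. I want to show these three points converge a.s. to a common limit.

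The natural quantity to track is the diameter $D_n := \max_{1\le i<j\le 3}\|r_i^{(n)} - r_j^{(n)}\|$ of the triple of rows (equivalently, the diameter of the convex hull, which is a triangle contained in $V_2$). The key step is to show $D_n\to 0$ a.s. Note $D_n$ is non-increasing: replacing one of the three vertices by a convex combination of all three keeps the new point inside the current triangle, so the convex hull shrinks (weakly) at every step, and being non-increasing and bounded below, $D_n$ converges a.s. to some limit $D_\infty\ge 0$. To get $D_\infty=0$, I would argue that over a block of, say, three consecutive steps in which the row to be replaced happens to be $1$, then $2$, then $3$ (an event of probability $1/27$, independent across disjoint blocks), the diameter is multiplied by a factor bounded away from $1$. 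Indeed, if we move vertex $1$ to $c = P_1 r_1 + P_2 r_2 + P_3 r_3$, the new triangle has vertices $c, r_2, r_3$; since $c$ lies in the triangle at barycentric-type weight $(P_1,P_2,P_3)$ with $P_1$ bounded below by some positive random amount, the pairwise distances $\|c-r_2\|$, $\|c-r_3\|$ are at most $(1-\min(P_2,P_3))D_n \le (1-c_0)D_n$ on the event $\{P_2\wedge P_3 \ge c_0\}$, which has positive $\nu$-probability for suitable $c_0>0$; iterating over the three rows gives a genuine contraction. Combining a Borel–Cantelli / renewal argument over infinitely many such good blocks forces $D_\infty = 0$ a.s. Once $D_n\to 0$, the three rows converge to a common limit because each individual row sequence is Cauchy: the row indexed $i$ only changes at the steps where $M$ replaces row $i$, and when it changes it moves by at most $D_n$ which is summable-along-a-subsequence in a way controlled by the geometric decay, so $\sum_n \|r_i^{(n+1)}-r_i^{(n)}\| < \infty$ a.s.; hence $S_n \to S$ with all three rows of $S$ equal.

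The main obstacle I expect is making the contraction argument fully rigorous without assuming anything about $\nu$ beyond $P_i>0$ a.s. and symmetry: the factor $1-P_2\wedge P_3$ is random and can be arbitrarily close to $1$, so one cannot get a deterministic contraction rate. The clean way around this is to fix $c_0>0$ small enough that $\nu(\{P_1\wedge P_2\wedge P_3 \ge c_0\}) =: \rho > 0$ (possible since $P_i>0$ a.s., so the probability tends to $\nu$-full mass as $c_0\to 0$), and then consider the i.i.d. sequence of events $G_m$ that in steps $3m+1,3m+2,3m+3$ the chosen rows are $1,2,3$ in order \emph{and} all three splitting triplets satisfy $P_1\wedge P_2\wedge P_3\ge c_0$; then $\p(G_m)\ge \rho^3/27>0$, the $G_m$ are independent, so a.s. infinitely many occur, and on each the diameter is multiplied by at most $(1-c_0)^3<1$. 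Since $(D_n)$ is non-increasing overall, this gives $D_n\to 0$ a.s., completing the argument. A secondary technical point is checking that moving one vertex of a triangle to an interior convex combination never increases the diameter of the vertex set — this is elementary convexity (the new vertex set has convex hull contained in the old triangle, and the diameter of a compact convex set equals the diameter of its extreme points), but should be stated.
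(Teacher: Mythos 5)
Your proof is correct and takes a genuinely different route from the paper, although both rest on the same structural observation: the three rows of $S_n$ form a nested sequence of triangles (point sets) in $V_2$, so the diameter is non-increasing. The paper proceeds indirectly: it first proves a lemma stating that any subsequential a.s.\ limit of the triple of rows must have all three rows equal, by a contradiction argument (if $L^{(1)}\neq L^{(3)}$, then with probability one some later barycentre $Y_{n_0}$ must fall $\varepsilon$-far from one of the limits while being copied into the corresponding row at the next step, a contradiction); then it extracts a convergent subsequence by compactness, whose limit is $(L,L,L)$; and finally it uses the nesting to upgrade subsequential convergence to full a.s.\ convergence. You instead run a direct, quantitative contraction argument: you exhibit i.i.d.\ ``good'' blocks of three consecutive steps (each row hit once, all three splitting triplets coordinatewise $\geq c_0$) that occur with positive probability, hence infinitely often by Borel--Cantelli, and on each such block the diameter contracts by a factor bounded away from $1$; monotonicity of $D_n$ then forces $D_n\to 0$. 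This is cleaner and avoids the somewhat delicate subsequential contradiction step of the paper.

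Two small issues worth fixing. First, your claimed per-block contraction factor $(1-c_0)^3$ is overstated: one careful pass through the three sub-steps gives $D_{n+3}\leq(1-c_0)^2 D_n$ (after moving row $1$, the pair $(r_2,r_3)$ is untouched, so only after a second replacement do \emph{all} pairwise distances drop below $(1-c_0)D_n$, and the third step only brings some pairs down to $(1-c_0)^2 D_n$). Since any factor strictly less than $1$ suffices, this is harmless, but the bound as stated is not justified. Second, your final Cauchy/summability argument (``$\sum_n\|r_i^{(n+1)}-r_i^{(n)}\|<\infty$'') needs more work to justify (the increments are controlled by $D_n$, which decays geometrically only along the random good-block subsequence, so one has to integrate the gap lengths against the geometric decay). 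It is simpler and sharper to finish exactly as you began: the convex hulls $\mathrm{conv}(r_1^{(n)},r_2^{(n)},r_3^{(n)})$ are nested compact sets whose diameters tend to $0$, so their intersection is a single point $L$, and since $r_i^{(n)}$ lies in the $n$-th hull for every $n$, each row converges to $L$. This nested-hull closing step is in fact essentially what the paper uses in the last paragraph of its proof.
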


\begin{proof}
We write $S_n= \left( \begin{array}{c}
L^{(1)}_n \\
L^{(2)}_n \\
L^{(3)}_n 
\end{array} \right) $. We wish to show that there exists $L_{\infty}$ such that a.s. for all $i \in \{1,2,3 \}$, $L^{(i)}_n \rightarrow L_{\infty}$.

\begin{lem}\label{subseq}
Let $(n_k)$ be some sub-sequence of integers such that $L^{(i)}_{n_k} \rightarrow L^{(i)}$ a.s. for all $i \in \{1,2,3 \}$. Then $L^{(1)}=L^{(2)}=L^{(3)}$ a.s..
\end{lem}

\begin{proof}
We proceed by contradiction. To simplify notation we assume that $L^{(i)}_n \rightarrow L^{(i)}$ a.s. for $i \in \{1,2,3 \}$. Write $A,B,C$ for the three points whose respective coordinates are $L^{(1)},L^{(2)},L^{(3)}$. Similarly write $A_n,B_n,C_n$ for the three points with respective coordinates $L^{(1)}_n,L^{(2)}_n,L^{(3)}_n$. We may assume that $\p(A \neq C) > 0$, and from now on work conditionally to this event. 

Fix some $\varepsilon > 0$ such that $6 \varepsilon < d(A,C) $. Now there exists $N$ such that for any $n \geq N$, we have $d(X_n,X) < \varepsilon$ for $X \in \lbrace A,C \rbrace$. Thus by construction the balls $B(A_n, 2 \varepsilon)$ and $B(C_n, 2 \varepsilon)$ do not intersect, and $B(X, \varepsilon) \subseteq B(X_n, 2 \varepsilon)$ for $X \in \lbrace A,C \rbrace$. Define $Y_n := P_1 A_n + P_2 B_n + P_3 C_n$, where $P=(P_1,P_2,P_3)$ is a splitting ratio, independent from $(A_n, B_n, C_n)$. Then $d(Y_n,A_n) \geq 2\varepsilon$ or $d(Y_n,C_n) \geq 2\varepsilon$, so that $d(Y_n,A) \geq \varepsilon$ or $d(Y_n,C) \geq \varepsilon$. See Figure \ref{smallD} below for an illustration of this situation. Using the definition of the matrices $(M_i)$ we get that with probability equal to $1$ (still conditionally on the event $A \neq C$) there exists $n_0 \geq N$ such that one of the following occurs:
\begin{enumerate}
\item We have $d(Y_{n_0},A) \geq \varepsilon$ and $M_{n_0 + 1} = M^{(1)}$ so that $A_{n_0 +1} = Y_{n_0}$, which contradicts $d(A_{n_0 + 1},A) < \varepsilon$.
\item We have $d(Y_{n_0},C) \geq \varepsilon$ and $M_{n_0 + 1} = M^{(3)}$ so that $C_{n_0 +1} = Y_{n_0}$, which contradicts $d(C_{n_0 + 1},C) < \varepsilon$.
\end{enumerate}
This completes the proof of the lemma.

\begin{figure}[!h]
\centerline{\includegraphics{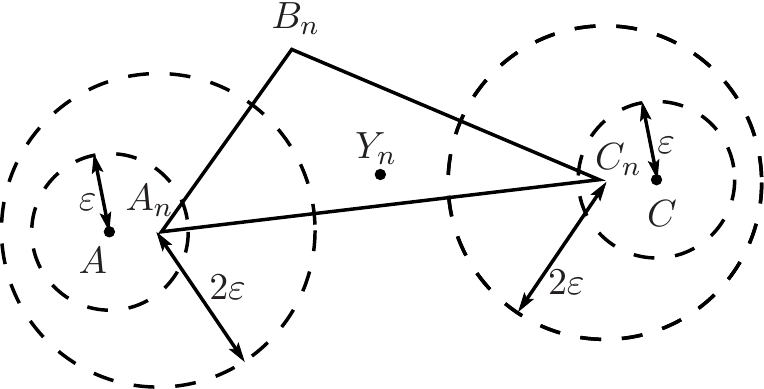}}
\caption{An illustration of the proof of Lemma \ref{subseq}}
\label{smallD}
\end{figure}

\end{proof}

Let us now prove Proposition \ref{mat_prod}. Let $(L,L,L)$ be the a.s. limit along some subsequence of $\big((L^{(1)}_n,L^{(2)}_n,L^{(3)}_n)\big)$. Write $M$ for the point in $\tilde{T}$ with (barycentric) coordinates $L$. Similarly, write $(M^{(1)}_n,M^{(2)}_n,M^{(3)}_n)$ for the points with respective coordinates $(L^{(1)}_n,L^{(2)}_n,L^{(3)}_n)$. Now a.s. for any $\varepsilon > 0$ there exists $N$ such that $d(M^{(i)}_N,M) < \varepsilon$ for all $i \in \{ 1,2,3 \}$. But for $n \geq N$ the points $M^{(i)}_n$ are all in the filled triangle defined by $(M^{(1)}_N,M^{(2)}_N,M^{(3)}_N)$ by construction. It follows therefore that for any $n \geq N$, we have $d(M^{(i)}_n,M) < \varepsilon$ for all $i \in \{ 1,2,3 \}$. This proves that a.s. 
\[ \left(M^{(1)}_n,M^{(2)}_n,M^{(3)}_n\right) \longrightarrow (M,M,M) \quad \mbox{as } n \rightarrow \infty, \]
which is the desired result.
\end{proof}

\subsubsection{Proof of Theorem \ref{UOM}.(2)}

The idea of the proof is as follows. Consider a uniform ternary tree $t_n \in \T_n$ and choose two independent nodes $u^{(1)}_n,u^{(2)}_n$ uniformly at random in $t_n$. Then the greatest common ancestor of these $v_n:=u^{(1)}_n \wedge u^{(2)}_n$ is at height of order $n^{\frac{1}{2}}$. This says that the corresponding two vertices $U^{(1)}_n,U^{(2)}_n$ are in a small triangle. Intuitively, this suggests they will asymptotically be near to each other. This is made clear in the following lemma.

\begin{lem}\label{2points}
Keeping the same notation as in the statement of Theorem \ref{UOM}, conditionally to $m_n$, choose two vertices $U^{(1)}_n,U^{(2)}_n \in \V(m_n)$ independently, uniformly at random. Then the following convergence holds in probability:
\[ \| U^{(2)}_n - U^{(1)}_n \| \cvp 0, \quad \mbox{as } n \rightarrow \infty.\]
\end{lem}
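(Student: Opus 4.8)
The plan is to show that two independently chosen vertices are, with high probability, contained in a triangle $\tilde T(v_n)$ associated with a node $v_n$ of the coordinate-labelled tree which has large height, and that triangles associated with high nodes have small diameter with high probability. Concretely, write $t\bullet_n = \Psi_n^{-1}(m_n)$ and let $U_n^{(1)\prime}, U_n^{(2)\prime}$ be the nodes corresponding to $U_n^{(1)}, U_n^{(2)}$. Let $v_n := U_n^{(1)\prime}\wedge U_n^{(2)\prime}$ be their highest common ancestor, with height $h_n := |v_n|$. The key combinatorial input, coming from the structure of uniform ternary trees (as in Fact \ref{unif_point} and the Catalan-type asymptotics used in \cite{AM}), is that $h_n\to\infty$ in probability, in fact $h_n$ is of order $n^{1/2}$. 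I would isolate this as a first step: by the recursive self-similarity of the uniform distribution on $\T_n$, the law of $v_n$ together with the sizes of the three subtrees hanging from the branching structure is explicit, and a standard computation shows $\p(h_n \le A)\to 0$ for every fixed $A$.

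The second step is geometric. By Theorem \ref{bij_drawing_markedtree} and the labelling rules, both points $U_n^{(1)}$ and $U_n^{(2)}$ lie in the filled triangle $\tilde T(v_n)$ (they are inserted strictly below $v_n$ in the tree, hence inside the region coded by $v_n$). Therefore $\|U_n^{(2)}-U_n^{(1)}\| \le \mathrm{diam}\,\tilde T(v_n)$. Now $\tilde T(v_n)$ is obtained from $\tilde T = \tilde T(\emptyset)$ by a sequence of $h_n$ contractions, one for each step along the ancestral line of $v_n$: concretely, if $v_n = (w_1,\dots,w_{h_n})$, the coordinates $\C(v_n)$ are obtained from $\C(\emptyset)$ by applying the random matrices $M_{w_1},\dots,M_{w_{h_n}}$ built from the i.i.d. splitting triplets, exactly as in \eref{defM}. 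So it suffices to show that the diameter of the triangle with vertex-coordinates $M_h\cdots M_1 \,\C(\emptyset)^T$ converges to $0$ in probability as $h\to\infty$. But Proposition \ref{mat_prod} already gives exactly this: $S_h = M_h\cdots M_1$ converges a.s. to a matrix $S$ with three equal rows, so the three points coded by the rows of $S_h$ converge a.s. to a common point, i.e. $\mathrm{diam}\,\tilde T((w_1,\dots,w_h))\to 0$ a.s. (note the ancestral labels along the line to $v_n$ are i.i.d. uniform on $\{1,2,3\}$ by Fact \ref{unif_point}, matching the law of the $M_i$).

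The third step is to combine these: fix $\varepsilon>0$. Since $S_h\to S$ a.s., there is $A$ with $\p(\mathrm{diam}\,\tilde T((w_1,\dots,w_h)) \ge \varepsilon$ for some $h\ge A) < \varepsilon$ for the i.i.d.\ line; and on the event $\{h_n\ge A\}$ (whose complement has vanishing probability) the triangle $\tilde T(v_n)$ has the law of such a triangle with $h = h_n \ge A$, so $\|U_n^{(2)}-U_n^{(1)}\| \le \mathrm{diam}\,\tilde T(v_n) < \varepsilon$ except on an event of probability at most $2\varepsilon$ for $n$ large. A small technical point to handle carefully is the (lack of) independence between $h_n$ and the splitting triplets along the ancestral line, and the fact that conditionally on $h_n = h$ the ancestral labels $w_1,\dots,w_h$ of $v_n$ are i.i.d.\ uniform on $\{1,2,3\}$ and independent of the splitting triplets — this is where Fact \ref{unif_point} (extended to $v_n$) and the product structure of $\nu$ do the work, and it lets us deduce that conditionally on $h_n = h$ the triangle $\tilde T(v_n)$ is distributed like the triangle coded by $M_h\cdots M_1\,\C(\emptyset)^T$.

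\textbf{Main obstacle.} The genuinely delicate part is the first step: establishing that $h_n\to\infty$ in probability (the height of the highest common ancestor of two uniform nodes in a uniform ternary tree diverges). This requires the asymptotic enumeration of ternary trees and a careful bookkeeping of subtree sizes under the recursive decomposition; the geometric steps, by contrast, are essentially immediate consequences of Proposition \ref{mat_prod}. One could alternatively import this directly from \cite{AM}, where the analogous statement for stack triangulations — that the distance between two uniform vertices rescaled by $n^{1/2}$ converges to the CRT distance — is proved, and in particular the common-ancestor height is of order $n^{1/2}$.
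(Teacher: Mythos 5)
Your proof is essentially identical to the paper's: both reduce the lemma to (i) the divergence in probability of the height of the highest common ancestor $V_n$ of two uniform nodes of $t_n$, and (ii) the a.s.\ convergence of $\mathrm{diam}\,\tilde T(W_h)$ to zero along an i.i.d.\ uniform ancestral path (which the paper states as Lemma \ref{small_diam}, and which you correctly obtain directly as a consequence of Proposition \ref{mat_prod}). The paper merely asserts step (i) without proof (and its displayed inequality $\p(|V_n|\ge A)\to 0$ is an evident typo for $\p(|V_n|\le A)\to 0$), while you correctly flag it as the step needing the enumeration/CRT input and point to \cite{AM} for it.
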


\begin{proof}
Let as before $U'^{(1)}_n$ (resp. $U'^{(2)}_n$) be the node corresponding to $U^{(1)}_n$ (resp. $U^{(2)}_n$) in the tree $t\bullet_n$, via the bijection $\Psi_n$ established in Theorem \ref{bij_drawing_markedtree}. Write $V_n := U'^{(1)}_n \wedge U'^{(2)}_n$ for the greatest common ancestor of these two nodes. It is clear that $ \| U^{(2)}_n - U^{(1)}_n \| \leq \mathrm{diam}(\tilde{T}(V_n))$, where $\mathrm{diam}(S)$ is the diameter of set $S$. Moreover, we know that for any $A>0$, $\p( \vert V_n \vert \geq A )$ tends to zero as $n$ goes to infinity. Using this, and Proposition \ref{unif_point}, it is sufficient to show the following.

\begin{lem}\label{small_diam}
Let $(u_k)_{k \geq 1}$ be a sequence of i.i.d. random variables, uniform on $\lbrace 1,2,3 \rbrace$. Write $W_n := u_1 \cdots u_n \in \T^{\infty}$. Then the following convergence holds in probability:
\[ \mathrm{diam}(\tilde{T}(W_n)) \cvp 0 \quad \mbox{as } n \rightarrow \infty.\]
\end{lem}

In fact, the convergence holds a.s.. Note that the sequence of triangles $ \big( \tilde{T}(W_n) \big)_n $ is non increasing for inclusion, therefore $ \big( 
\mbox{diam}(\tilde{T}(W_n)) \big)_n$ is non increasing, so converges a.s. to some limit $l \geq 0$. Now take some subsequence $(n_k)$ such that the triangle $ \tilde{T}(W_{n_k}) $ converges to some limit triangle $\tilde{T_0} = (A_0,B_0,C_0)$\footnote{When we say ``the triangle converges" here, we mean that the triplet of points of the triangle converges}. We can then use the same proof as for Lemma \ref{subseq} to show that $A_0=B_0=C_0$ a.s., and hence $l=0$ a.s. as desired.
\end{proof}

We now use Lemma \ref{2points} to prove Theorem \ref{UOM}.(2). We denote, for any measure $\mu$ on the triangle $\tilde{T}$ and any measurable function $f$ on $\tilde{T}$, 
\[ \left\langle f, \mu \right\rangle := \int_{\tilde{T}} f \, d\mu .\]
 We show that for any real-valued function $f$ continuous on $\tilde{T}$, $ \left\langle f,\mu_n \right\rangle \, \cvd \, \left\langle f,\delta_{U_{\infty}}\right\rangle = f(U_{\infty})$. It suffices to show that 
\[ \Big( \left\langle f,\mu_n\right\rangle \, , f(U_n) \Big) \cvd \Big(f(U_{\infty}),f(U_{\infty})\Big), \]
where $U_n$ is as in the statement of Theorem \ref{UOM}. Since point $(1)$ of Theorem \ref{UOM} implies that $f(U_n) \cvd f(U_{\infty})$, it suffices to show that 
\[ \left\langle f,\mu_n\right\rangle \, - \, f(U_n) \cvd 0.\]
Now $\E ( \left\langle f,\mu_n\right\rangle ) = \E \big( \frac{1}{n} \sum_{x \in \V(m_n)} f(x) \big) = \E \big(f(U_n)\big)$, thus it is sufficient to show that 
\[\Var \big( \left\langle f,\mu_n\right\rangle \, - \, f(U_n) \big) \longrightarrow 0. \] 
Let $U^{(1)}_n,U^{(2)}_n$ be as in the statement of Lemma \ref{2points}. We have
\[ \Var \big( \left\langle f,\mu_n\right\rangle \, - \, f(U_n) \big) = \E\big( ( \left\langle f,\mu_n\right\rangle \, - f(U_n) )^2 \big)\]
\[= \E\big(f\left(U^{(1)}_n\right)^2 - f\left(U^{(1)}_n\right)f\left(U^{(2)}_n\right)\big), \]
so that
\[ \Var \big( \left\langle f,\mu_n\right\rangle \, - \, f(U_n) \big) \leq \| f \| _{\infty} \E\big( \vert f\left(U^{(1)}_n\right) - f\left(U^{(2)}_n \right) \vert \big) . \]
Since $\tilde{T}$ is compact and $f$ continuous, $f$ is uniformly continuous. Fix some $\varepsilon > 0$. There exists $\eta > 0$ such that for any $x,y \in \tilde{T}$ with $\| x-y \| \leq \eta$, we have $\vert f(x) - f(y) \vert \leq \varepsilon$. Then
\[ \Var \big( \left\langle f,\mu_n\right\rangle \, - \, f(U_n) \big) \leq \| f \| _{\infty}\Big(\varepsilon + 2 \| f \| _{\infty} \p\big(\| U^{(2)}_n - U^{(1)}_n \| > \eta \big) \Big). \]
Using Lemma \ref{2points} we get that
\[ \limsup_{n \rightarrow \infty} \Var \big( \left\langle f,\mu_n\right\rangle \, - \, f(U_n) \big) \leq \varepsilon \| f \| _{\infty}, \]
and since this holds for any $\varepsilon > 0$, the desired result follows. This completes the proof of Theorem \ref{UOM}. \qed

\medskip

It may also be interesting to obtain information on the law of the limit point $U_{\infty}$, since this point is where the occupation measure is concentrated asymptotically. Proposition \ref{mat_prod} tells us that the coordinates $C_{\infty}$ of the limit point $U_{\infty}$ follow the law of one line of this matrix $S$, and satisfies the following equation in distribution:
\beq\label{dist_eq}
C_{\infty} \eqd C_{\infty}.M_{\nu} \mbox{, where $M_{\nu}$ has distribution }\frac{1}{3} \delta_{M_{\nu}^{(1)}} + \frac{1}{3} \delta_{M_{\nu}^{(2)}} + \frac{1}{3} \delta_{M_{\nu}^{(3)}}, 
\eq
and the $M_{\nu}^{(i)}$ are defined as in \eref{defM}.
This can be interpreted as follows. Split the original triangle $T$ in three using the splitting law $\nu$, and pick one of the three subsequent triangles uniformly at random. Now choosing a point with respect to $C_{\infty}$ in that triangle is the same (has the same law) as choosing a point with respect to $C_{\infty}$ in $T$. The distribution of $C_{\infty}$ is thus the limit distribution of a (very) simple Markov chain.

\begin{pro}
Let $\mathcal{M}_2(C)$ be the set of symmetric (probability) laws on $V_2^*$. For any splitting law $\nu \in \ms$, the distribution equation $C_{\infty} \eqd C_{\infty}.M_{\nu}$ has a unique solution $C_{\infty} \in \mathcal{M}_2(C)$.
\end{pro}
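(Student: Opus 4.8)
The plan is to show that the law $\mu_\infty$ of the random vector $C_\infty$ furnished by Proposition~\ref{mat_prod} is a solution lying in $\mathcal{M}_2(C)$, and then that it is the \emph{only} solution, even among all probability laws on $V_2$.

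\emph{$\mu_\infty$ is a solution in $\mathcal{M}_2(C)$.} That $\mu_\infty$ satisfies $C_\infty \eqd C_\infty.M_\nu$ is exactly \eref{dist_eq}. For symmetry, note that conjugating $M^{(i)}$ by a permutation matrix $\Pi_\sigma$ produces a matrix of the form $M^{(\sigma^{-1}(i))}$ with the splitting triplet $P$ replaced by a permutation of its entries; since $\nu$ is symmetric and the index $i$ is uniform on $\{1,2,3\}$, the common law $\frac13(\delta_{M^{(1)}}+\delta_{M^{(2)}}+\delta_{M^{(3)}})$ of the $M_j$ is invariant under $M\mapsto\Pi_\sigma^{-1}M\Pi_\sigma$, hence $\Pi_\sigma^{-1}S_n\Pi_\sigma \eqd S_n$ for all $n$; letting $n\to\infty$ and using that $S$ has three identical rows (so conjugating by $\Pi_\sigma$ just permutes the entries of that common row) shows this common row is invariant in law under coordinate permutations. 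For the support, write $C_\infty=(c_1,c_2,c_3)$ and $q:=\p(c_1=0)=\p(c_2=0)=\p(c_3=0)$ (equalities by symmetry), and suppose $q>0$. Computing the first coordinate of $C_\infty.M^{(k)}$ for $k=1,2,3$ and using $P_1>0$ a.s. together with $c_i\geq0$, the event that the first coordinate of $C_\infty.M_\nu$ vanishes has probability $\frac13\p(c_1=0)+\frac13\p(c_1=c_2=0)+\frac13\p(c_1=c_3=0)$; equating this with $q$ via \eref{dist_eq} and using symmetry forces $\p(c_1=c_2=0)=q$, i.e. $\{c_1=0\}\subseteq\{c_2=0\}$ a.s., and then by symmetry $\{c_1=0\}=\{c_2=0\}=\{c_3=0\}$ a.s., contradicting $c_1+c_2+c_3=1$. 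Hence $q=0$ and $\mu_\infty$ is carried by $V_2^*$, so $\mu_\infty\in\mathcal{M}_2(C)$.

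\emph{Uniqueness.} Let $\mu$ be any probability law on $V_2$ solving the equation, let $C\sim\mu$, and let $(M_i)_{i\geq1}$ be i.i.d. with law $\frac13(\delta_{M^{(1)}}+\delta_{M^{(2)}}+\delta_{M^{(3)}})$, all independent. Iterating the fixed-point relation (at each step using the independence of the $M_i$ and replacing the right-hand occurrence of the variable by an independent copy distributed as $\mu$) gives $C\eqd C.S_n$ for every $n$, with $S_n=M_n\cdots M_1$ as in Proposition~\ref{mat_prod}. Since $S_n\to S$ a.s. with $S$ having three identical rows, and $C\in V_2$ has coordinate sum $1$, we get $C.S_n\to C.S$ a.s., hence in distribution; and $C.S$ is precisely the common row of $S$, whose law is $\mu_\infty$. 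As $C.S_n$ has law $\mu$ for every $n$, we conclude $\mu=\mu_\infty$. Combined with the previous paragraph, the equation has the unique solution $\mu_\infty\in\mathcal{M}_2(C)$.

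\emph{Main obstacle.} Granted Proposition~\ref{mat_prod}, the symmetry and uniqueness parts are soft; the only genuinely delicate point is ruling out mass on the boundary of the simplex, i.e. showing the solution lies in $V_2^*$ rather than merely in $V_2$, which is where the symmetry of $\nu$ and the explicit form of the matrices $M^{(i)}$ are essential.
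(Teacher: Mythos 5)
Your proof is correct, but it follows a genuinely different route from the paper's. The paper endows $\mathcal{M}_2(C)$ with an $L^2$-type metric and applies Banach's fixed point theorem, computing $\|XM_\nu\|_2^2$ explicitly via the matrix $M^{(1)}(M^{(1)})^T$ and bounding $\E(|P|^2)$ to obtain a contraction factor $\frac{a+2}{3}<1$. You instead leverage the almost sure convergence $S_n\to S$ already established in Proposition~\ref{mat_prod}: you identify the fixed point as the law $\mu_\infty$ of a row of $S$, verify directly (by the permutation-conjugation invariance of the law of $M_\nu$) that it is symmetric and (by the $q=\p(c_1=0)$ bootstrap) that it charges only $V_2^*$, and prove uniqueness by the pull-back $C\eqd \tilde C.S_n \to \tilde C.S = L$ with a fixed copy $\tilde C\sim\mu$ independent of $(M_i)_i$.

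Each approach has its merits. The paper's contraction argument is self-contained, quantitative, and does not need Proposition~\ref{mat_prod}; on the other hand, for Banach's theorem one also needs to check that the map $\mathbf{L}(X)\mapsto\mathbf{L}(X.M_\nu)$ sends $\mathcal{M}_2(C)$ into itself (symmetry is preserved and $V_2^*$ is mapped into $V_2^*$ since $P_i>0$), a step the paper leaves implicit but which your symmetry and support arguments make explicit. Your proof gives a strictly stronger uniqueness statement --- among \emph{all} laws on $V_2$, not just symmetric ones supported on $V_2^*$ --- and sidesteps the need to set up a complete metric on the space of laws. One small precision worth spelling out in the uniqueness step: iterating $C\eqd C'.M$ naively produces $C\eqd C^{(n)}.S_n$ with a different copy $C^{(n)}$ at each $n$, so to pass to an a.s.\ limit one should, as you essentially do, fix once and for all a single $\tilde C\sim\mu$ independent of the entire sequence $(M_i)_{i\geq1}$ and use $\tilde C.S_n\eqd C$ for every $n$ together with $\tilde C.S_n\to\tilde C.S$ a.s.
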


This tells us that Equation \eref{dist_eq} characterises the distribution of the limit point.

\begin{proof}
We endow $\mathcal{M}_2(C)$ with the usual $\mathcal{L}^2$ norm denoted $\|.\|_2$. This makes it complete, and thus by Banach's fixed point theorem it is sufficient to show that the map $ \left\lbrace \begin{array}{c}
\mathcal{M}_2(C) \rightarrow \mathcal{M}_2(C) \\
\hspace{0.7cm} \mathbf{L}(X) \mapsto \mathbf{L}(X.M_{\nu}) \end{array} \right.$, where $\mathbf{L}(Y)$ denotes the law of a random variable $Y$, is a contraction.

\smallskip

Take $\mu \in \mathcal{M}_2(C)$ and let $X=(X_1,X_2,X_3)$ have distribution $\mu$. Write for short $m_2= \E(X_i^2)$ and $m_{1,1}=\E(X_iX_j)$ for $i \neq j$. Then
\[ \| X \|_2 ^2 = 3m_2. \]
We now wish to compute $\| XM_{\nu} \|_2 ^2$. We have
\[\| XM_{\nu} \|_2 ^2 = \frac{1}{3} \E \Big( \sum_{i=1}^3 XM^{(i)}(M^{(i)}) ^TX^T \Big). \]
Now a computation gives us that
\[ M^{(1)}(M^{(1)})^T = \left( \begin{array}{ccc}
\vert P \vert ^2 & P_2 & P_3 \\
P_2 & 1 & 0 \\
P_3 & 0 & 1
\end{array} \right), \]
where $\vert P \vert ^2 := P_1^2 + P_2 ^2 + P_3^2 $. Thus
\[ XM^{(1)}(M^{(1)})^TX^T = X_1^2 \vert P \vert ^2 + X_2^2 P_2 + X_3^2P_3 + 2(X_1X_2P_2 + X_1X_3P_3).\]
It follows that 
\[\E\Big(XM^{(1)}(M^{(1)})^TX^T\Big) = m_2\Big( \E \big(\vert P \vert ^2 \big) + \frac{2}{3} \Big) + \frac{4}{3} m_{1,1}. \]
Here we use the symmetry of $\nu$ (hence in particular $\E(P_i) = \frac{1}{3})$. Since the above equality is symmetric, it immediately follows that
\beq\label{norm}
\| XM_{\nu} \|_2 ^2 = m_2\left( \E \big(\vert P \vert ^2 \big) + \frac{2}{3} \right) + \frac{4}{3} m_{1,1}.
\eq
Now $\E(P_1^2) \leq \E(P_1) = \frac{1}{3} $ since $P_1 \leq 1$ a.s. and moreover, this inequality is strict since $P_1=0 \mbox{ or } 1$ a.s. is not allowed. Write $a=3\E(P_1^2)=\E \big(\vert P \vert ^2 \big)<1$. By the Cauchy-Schwarz inequality
\[ m_{1,1} = \E(X_1X_2) \leq \sqrt{\E(X_1^2)} \sqrt{\E(X_2^2)} = m_2. \]
It follows from \eref{norm} that 
\[\| XM_{\nu} \|_2 ^2 \leq (a+2)m_2 = \frac{a+2}{3} \| X \|_2 ^2, \]
and since $a<1$ this shows that the map $X \rightarrow XM_{\nu} $ is indeed a contraction.
\end{proof}

\paragraph{Special case:} when $P = (\frac{1}{3},\frac{1}{3},\frac{1}{3})$ a.s., the law of $U_{\infty}$ is the uniform distribution on $\tilde{T}$.

\noindent Indeed, putting a point at the centre of gravity of a triangle, choosing one of the three resulting triangles uniformly at random and placing a point uniformly in that triangle, is the same as placing a point uniformly in the original triangle.

\subsection{The drawing of the triangulation}\label{UD}

The previous results give us information on the asymptotic behaviour of the occupation measure, and thus tell us where the vertices are located asymptotically. In this section we obtain information on the behaviour of the drawings themselves, that is the behaviour of compact triangulations under $\up_n$. We immediately state the main result.

\begin{thm}\label{conv_draw}
Let $\nu \in \ms$ be a splitting law and let $(m_n)_{n \geq 0}$ be a sequence of compact triangulations under the distribution $\up_n$. There exists a random compact space $m_{\infty}$ such that
\[ m_n \longrightarrow m_{\infty}, \quad \mbox{as } n \rightarrow \infty \]
where the convergence holds in distribution in the set of compact subspaces of the filled triangle $\tilde{T}$ equipped with the Hausdorff distance.
\end{thm}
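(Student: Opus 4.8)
The plan is to transfer the statement to the level of trees, where it becomes an assertion about local convergence. Recall that under $\up_n$ we have $m_n = \Psi_n(t^P_n\bullet)$, where $t^P_n\bullet = \Phi_n(t_n,(P(u))_{u\in t_n^0})$, $t_n$ is uniform on $\T_n$, and $(P(u))_{u\in\T^{\infty}}$ is an i.i.d.\ family with law $\nu$, independent of $t_n$. Since every $m_n$ is a compact subset of the fixed compact set $\tilde T$, the laws $(\up_n)_n$ are automatically tight in the space of compact subsets of $\tilde T$ endowed with $d_H$, so the only point is to identify a single limit law. We do this in three steps: (i) local convergence in distribution of $t^P_n\bullet$ to a random infinite coordinate-labelled tree $t^P_\infty\bullet$; (ii) extension of $\Psi$ to infinite coordinate-labelled trees together with a continuity property, which is the content of Theorem \ref{cont_thm}; (iii) an application of the continuous mapping theorem.

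For (i): since $t_n$ is uniform on $\T_n$, the theorem of Albenque and Marckert \cite{AM} quoted in the introduction, transported from stack triangulations to ternary trees through the classical bijection, gives that $t_n$ converges in distribution, for the topology of local convergence of trees (Definition \ref{loc_conv_def}), to the local limit $\T_\infty$ of uniform ternary trees (Kesten's tree of the associated critical, finite-variance Galton--Watson process). Realising the i.i.d.\ family $(P(u))_{u\in\T^{\infty}}$ on one probability space and attaching these labels, and recalling that the coordinates $\C(u)$ are, by rules (c)--(d) of Definition \ref{def_pltrees}, a fixed continuous function of the splitting triplets along the ancestor line of $u$, one upgrades this to local convergence in distribution of $t^P_n\bullet$ to the coordinate-labelled tree $t^P_\infty\bullet$ obtained by decorating $\T_\infty$ with these labels. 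Since $\T_\infty$ is independent of $(P(u))$, conditionally on $\T_\infty$ the splitting triplets along any branch of $\T_\infty$ are still i.i.d.\ with law $\nu$.

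For (ii): given an infinite coordinate-labelled tree $t\bullet$, let $t_{[r]}\bullet\in\pt$ be its truncation at height $r$ (keep the nodes of height $\le r$, turn those of height $r$ into leaves, drop their splitting triplets). The compact triangulations $m^{[r]}:=\Psi(t_{[r]}\bullet)$ are non-decreasing for inclusion, so we set $\Psi_\infty(t\bullet):=\overbar{\bigcup_{r\ge 0}m^{[r]}}\subseteq\tilde T$, which agrees with $\Psi_n$ on $\pt_n$. Call $t\bullet$ \emph{good} if $\mathrm{diam}(\tilde T(\xi_1\cdots\xi_r))\to 0$ along every infinite branch $\xi$ of $t$; by König's lemma this is equivalent to the truncation error $e_r(t\bullet):=\sup\{\mathrm{diam}(\tilde T(u)) : u \text{ is a leaf of } t_{[r]} \text{ but not a leaf of } t\}$ tending to $0$. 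Theorem \ref{cont_thm} states that $\Psi_\infty$ is continuous at every good $t\bullet$: if $t^{(n)}\bullet\to t\bullet$ locally and $\varepsilon>0$, pick $r$ with $e_r(t\bullet)<\varepsilon$; for $n$ large the height-$r$ balls of $t^{(n)}\bullet$ and $t\bullet$ coincide as trees and have labels within the modulus of continuity of $\Psi$ on finite trees (Theorem \ref{bij_drawing_markedtree}), so $m^{[r]}(t^{(n)}\bullet)$ and $m^{[r]}(t\bullet)$ are $d_H$-close and the height-$r$ triangles of $t^{(n)}\bullet$ are within $\varepsilon$ of those of $t\bullet$; since everything that $\Psi_\infty$ adds beyond height $r$ sits inside these face-triangles, within $O(\varepsilon)$ of their already-drawn boundaries, each of $m^{[r]}(t^{(n)}\bullet),m^{[r]}(t\bullet)$ is $d_H$-close to $\Psi_\infty(t^{(n)}\bullet),\Psi_\infty(t\bullet)$, and the triangle inequality gives $d_H(\Psi_\infty(t^{(n)}\bullet),\Psi_\infty(t\bullet))=O(\varepsilon)$.

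It remains to check that $t^P_\infty\bullet$ is a.s.\ good and to conclude. Kesten's tree $\T_\infty$ has a unique infinite branch, its spine $(s_r)_{r\ge 1}$, and in the size-biased (Kesten) decomposition the spine address $(s_r - s_{r-1})_{r\ge 1}$ is a sequence of i.i.d.\ uniform random variables on $\{1,2,3\}$; it is independent of $(P(u))$, and conditionally on $\T_\infty$ the splitting triplets along the spine are i.i.d.\ with law $\nu$. Hence Lemma \ref{small_diam}, in the almost-sure form noted after its statement, applies verbatim to the spine and shows $\mathrm{diam}(\tilde T(s_1\cdots s_r))\to 0$ a.s., so $t^P_\infty\bullet$ is a.s.\ good. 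Since $\Psi_\infty$ is then continuous on a set carrying the full mass of the law of $t^P_\infty\bullet$, the continuous mapping theorem applied to (i) yields $m_n=\Psi_n(t^P_n\bullet)\cvd\Psi_\infty(t^P_\infty\bullet)=:m_\infty$ in the space of compact subsets of $\tilde T$ with the Hausdorff distance, which is the claim. The main obstacle is step (ii): correctly defining $\Psi_\infty$ and establishing its continuity with respect to local convergence (Theorem \ref{cont_thm}); once this and the elementary spine description are in place, steps (i) and (iii) are routine.
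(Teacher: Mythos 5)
Your proof is correct and takes essentially the same approach as the paper: (i) local convergence of the uniform ternary tree to Kesten's tree, (ii) extension of the drawing map $\Psi$ to infinite trees and its continuity at the a.s.\ limit, which is exactly the content of Theorem~\ref{cont_thm}, and (iii) the continuous mapping theorem in the form valid on the support of the limit law. The only notable difference is that you carry out the argument directly for general $\nu$, invoking the stochastic Lemma~\ref{small_diam} along the spine and a coupling of the labels, whereas the paper specializes to $\nu=\nu^0$ (so that the drawing map is deterministic), proves the deterministic analogue via Lemma~\ref{small_infinite_branch}, and only sketches the extension to general $\nu$ in a remark.
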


The limit space $m_{\infty}$ is characterised as follows. Start with the initial triangle $T$ split in three by adding a point according to $\nu$. Pick one of these three triangles uniformly at random, call it $T'$. For each of the other two triangles, consider independently a random critical Galton-Watson ternary tree - this object shall be defined later in the paper - and draw the corresponding compact triangulation (each vertex insertion according to $\nu$, independently from all previous insertions and from the trees). Iterate ad infinitum this construction, replacing $T$ with $T'$, and take the closure of the space obtained (so as to have a compact space). Figure \ref{simu2} illustrates this convergence, showing a simulation of the map $m_n$ with $n \sim 10000$. The fact we can only see a handful of ``macroscopic" triangles suggests the convergence of the drawings.

To show Theorem \ref{conv_draw}, we restrict ourselves to the special case where the splitting law 
\beq\label{nu0}
\nu = \nu^0 := \delta_{\left(\frac13,\frac13,\frac13\right)},
\eq 
that is, a splitting ratio with law $\nu^0$ takes value $\left(\frac13,\frac13,\frac13\right)$ a.s.. There is no additional difficulty in the general case, but this special case simplifies certain statements such as Theorem \ref{cont_thm}, as well as certain formulae such as \eref{dist_bar}. We will be careful to always specify how we would proceed in the general case. Recall the definitions of the bijections $\Phi,\Psi$ in Remark \ref{def_Phi} and Theorem \ref{bij_drawing_markedtree}. We define a map
\beq\label{def_psi0}
\begin{array}{c}
\Psi^0 : \T \longrightarrow E \\
\hspace{5.8cm} t \longmapsto \overbar{ \Psi \circ \Phi \left(t,\left(\left(\frac13,\frac13,\frac13\right),u \in t^0 \right)\right)},
\end{array}
\eq
where $E$ is the set of compact subspaces of $\tilde{T}$, and $\bar{S}$ denotes the closure of a subspace $S \subseteq \tilde{T}$. In words, we take a tree $t$, make it a fragmentation labelled tree by adding the labels $\left(\frac13,\frac13,\frac13\right)$ at each internal vertex, and map it to its corresponding compact triangulation via the bijections established in Section \ref{sec_def} (taking the closure if the tree is infinite, so as to always work with compact spaces). Our main tool is the local convergence of Galton-Watson trees, and our main reference \cite{Gil}.

\begin{figure}[!h]
\centerline{\includegraphics{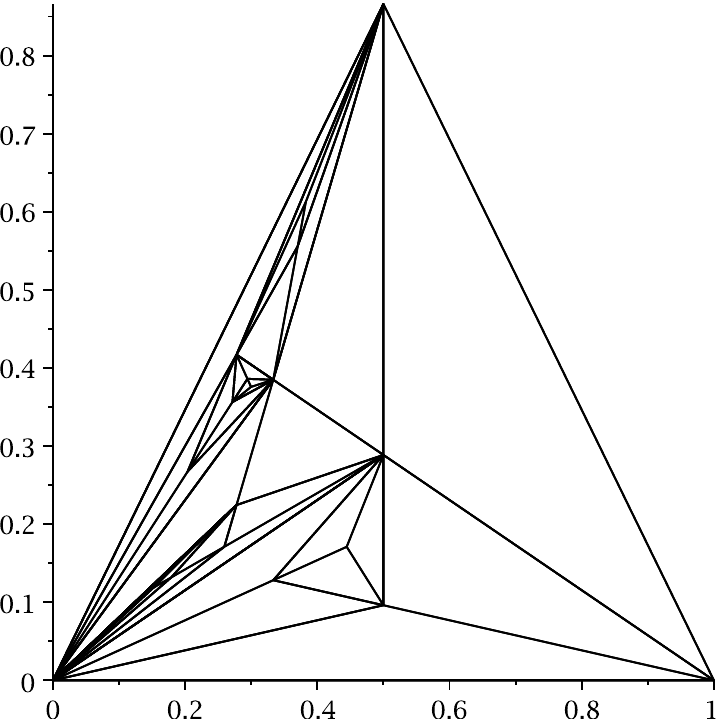}}
\caption{A simulation of a map $m_n$ under the distribution $\mathbb{U}_n^{\nu^0}$, with $n \sim 10000$.}
\label{simu2}
\end{figure}

\subsubsection*{Galton-Watson trees and local convergence}

\begin{de}
A $ \zeta$ Galton-Watson (or GW($\zeta)$-) tree is a random variable $ \tau \in \U$ such that
\begin{enumerate}
\item $ k_{\emptyset}(\tau) $ has law $ \zeta $, i.e. $ \p ( k_{\emptyset}(\tau) = k ) = \zeta (k) $ for any $k \in \N$.
\item For any $k$ such that $ \zeta(k) > 0 $, under the conditional probability $ \p(. \, \big\vert k_{\emptyset}(\tau) = k ) $, the trees $ \theta_{(1)}( \tau ),\theta_{(2)}( \tau ),\cdots,\theta_{(k)}( \tau ) $ are i.i.d and have the same law as $\tau$ under $\p$.
\end{enumerate}
\end{de}

\begin{pro}\label{unif-GW}
Let $\xi$ have law $\frac{2}{3} \delta_0 + \frac{1}{3} \delta_3$, and $\tau$ be a GW($\xi$)-tree. Then a.s. $\tau \in \T$ and for any $n \geq 0$, conditionally to the event $\tau \in \T_n$, $\tau$ is uniform in $\T_n$.
\end{pro}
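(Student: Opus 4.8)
The plan is to prove Proposition \ref{unif-GW} in two stages: first that a GW($\xi$)-tree $\tau$ with $\xi = \frac23\delta_0 + \frac13\delta_3$ is almost surely finite and ternary, and second that conditioned on having $n$ internal nodes it is uniform on $\T_n$.

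\textbf{Step 1: a.s. finiteness and the ternary property.} Since $\xi$ is supported on $\{0,3\}$, property (1) in the definition of a GW-tree forces $k_\emptyset(\tau) \in \{0,3\}$, and property (2) propagates this recursively to every node, so $k_u(\tau) \in \{0,3\}$ for all $u \in \tau$, i.e. $\tau$ is a ternary tree (when finite). For finiteness: the mean offspring number is $m = 3 \cdot \frac13 = 1$, so the branching process is critical, and a critical Galton-Watson process with $\zeta(1) \neq 1$ (here $\zeta(1)=0 < 1$, and in particular the offspring variance is positive) goes extinct almost surely. I would either cite this classical fact or note that the extinction probability $q$ is the smallest fixed point in $[0,1]$ of the generating function $g(s) = \frac23 + \frac13 s^3$, and solving $s = \frac23 + \frac13 s^3$, i.e. $s^3 - 3s + 2 = 0 = (s-1)^2(s+2)$, gives $q = 1$ as the smallest nonnegative root. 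Hence $\tau \in \T$ a.s.

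\textbf{Step 2: uniformity conditioned on size.} This is the standard ``conditioned Galton-Watson = uniform'' argument, and is the part requiring a little care. For a fixed finite ternary tree $t$ with $n$ internal nodes (hence $2n+1$ leaves, and $3n+1$ nodes total), the probability $\p(\tau = t)$ is the product over all nodes of the probability that the offspring count matches: each internal node contributes a factor $\xi(3) = \frac13$ and each leaf contributes $\xi(0) = \frac23$. Thus
\[
\p(\tau = t) = \left(\tfrac13\right)^{n}\left(\tfrac23\right)^{2n+1},
\]
which depends only on $n$, not on the shape of $t$. Therefore, conditionally on $\{\tau \in \T_n\}$ (an event of positive probability once we know $\p(\tau \text{ finite})=1$ and $|\T_n| < \infty$), all trees in $\T_n$ are equally likely, i.e. $\tau$ is uniform on $\T_n$. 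One should justify $\p(\tau = t) = \prod_{u \in t}\xi(k_u(t))$ rigorously by induction on $n$ (or on the height of $t$), unfolding the recursive definition: conditioning on $k_\emptyset(\tau) = 3$ and using that $\theta_{(1)}(\tau),\theta_{(2)}(\tau),\theta_{(3)}(\tau)$ are i.i.d.\ copies of $\tau$, the event $\{\tau = t\}$ factorizes as $\{k_\emptyset(\tau)=3\} \cap \bigcap_{i=1}^3\{\theta_{(i)}(\tau) = \theta_{(i)}(t)\}$.

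\textbf{Main obstacle.} None of the steps is genuinely hard; the only point needing attention is the bookkeeping in Step 2 — correctly counting that a ternary tree with $n$ internal nodes has exactly $2n+1$ leaves (immediate from the relation ``edges $= 3 \times$ internal nodes $=$ total nodes $- 1$'', giving total nodes $3n+1$ and leaves $2n+1$), and being careful that the induction unfolding the GW recursion is clean. The finiteness claim in Step 1 relies on the classical criticality dichotomy for Galton-Watson processes, which I would simply cite rather than reprove.
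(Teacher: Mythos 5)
Your proof is correct and takes essentially the same route as the paper: compute $\p(\tau = t) = (1/3)^n(2/3)^{2n+1}$ for $t \in \T_n$, observe this depends only on $n$, and conclude uniformity. The one difference is that you add an explicit proof of a.s.\ finiteness via the criticality of the branching process (mean offspring $m=1$, extinction probability the smallest root of $s = \frac23 + \frac13 s^3$), whereas the paper's proof does not address finiteness at all, simply asserting that $\tau$ is a.s.\ a ternary tree because every node has $0$ or $3$ children; your extra step is harmless and arguably tightens the argument, since the paper's notion of ternary tree does not manifestly exclude infinite trees.
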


\begin{proof}
First, $\tau$ is a.s. a ternary tree since by definition of $\xi$ every node has three children or none. Now for any $t \in \T_n$, for some $n\geq0$,
\[ \p(\tau = t) = \left( \frac{1}{3} \right)^n \left( \frac{2}{3} \right)^{2n+1} ,\]
since any $t \in \T_n$ has $n$ internal nodes which each have three children and $2n+1$ leaves. Thus all trees with the same size have the same weight.
\end{proof}

We can therefore view a uniform ternary tree in $\T_n$ as a GW($\xi$)-tree, conditional to have size $n$. We now define the topology of local convergence on trees.

\begin{de}
Let $t \in \U$ be a planar tree, and $r > 0$ some real number. Then $B_r(t)$ is the subtree of $t$ whose vertices all have height at most $r$, that is
\[ B_r(t) = \lbrace u \in t; \ \vert u \vert \leq r \rbrace.\]
\end{de}

\begin{de}\label{loc_conv_def}
Let $t,t' \in \U$ be two planar trees. Define the distance $\tilde{d}$ between $t$ and $t'$ by
\[ \tilde{d}(t,t') = \inf \left\lbrace \frac{1}{r+1}; \ B_r(t) = B_r(t'), \, r \in \R \right\rbrace.\]
\end{de}

One checks that $\tilde{d}$ is indeed a distance. The following proposition is a consequence of Proposition \ref{unif-GW} and Theorem III.3.1 in \cite{Gil}.

\begin{pro}\label{cv_loc}
Let $t_n$ be a uniform tree in $\T_n$. Then there exists a random variable $t_{\infty} \in \T$ such that 
\[ t_n \longrightarrow t_{\infty}, \quad \mbox{as } n \rightarrow \infty, \]
where the convergence holds in distribution in $\T$ equipped with the distance $\tilde{d}$. Moreover, the following properties hold a.s.:
\begin{enumerate}
\item $t_{\infty}$ has a unique infinite branch, written $t_{\infty}^0 = \emptyset,t_{\infty}^1,t_{\infty}^2,\cdots$.
\item For any $k$, conditionally to $t_{\infty}^k$, the law of $t_{\infty}^{k+1}$ is $\frac{1}{3} (\delta_{t_{\infty}^k(1)} + \delta_{t_{\infty}^k(2)} + \delta_{t_{\infty}^k(3)} )$. That is, the infinite branch is an infinite sequence of i.i.d. uniform left, middle, and right turns.
\item For any $u$ on the infinite branch, the two finite subtrees among $\theta_{u(1)}(t_{\infty}), \theta_{u(2)}(t_{\infty}), \theta_{u(3)}(t_{\infty})$ are independent GW($\xi$) trees, where $\xi$ has law $\frac{2}{3} \delta_0 + \frac{1}{3} \delta_3$.
\end{enumerate}
\end{pro}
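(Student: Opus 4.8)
The plan is to deduce the statement from the classical local limit theorem for size‑conditioned critical Galton–Watson trees, in the form of Theorem III.3.1 of \cite{Gil}, once Proposition \ref{unif-GW} has put us in the Galton–Watson framework. First I would record that the offspring law $\xi = \frac{2}{3}\delta_0 + \frac{1}{3}\delta_3$ is \emph{critical}, since its mean is $0\cdot\frac23 + 3\cdot\frac13 = 1$, and that it has bounded support, hence finite variance (criticality is all that the local limit theorem requires here). By Proposition \ref{unif-GW}, if $\tau$ is a GW($\xi$)-tree then $\tau\in\T$ a.s., and the law of $t_n$ is that of $\tau$ conditioned on $\{\tau\in\T_n\}$. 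A ternary tree with $n$ internal nodes has exactly $3n+1$ vertices, so $\{\tau\in\T_n\} = \{\#\tau = 3n+1\}$; this event has positive probability for every $n\geq 0$, so we may pass to the limit $n\to\infty$ without any aperiodicity restriction stemming from the fact that $\xi$ is supported on $\{0,3\}$.

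Second I would invoke Theorem III.3.1 of \cite{Gil}: a critical GW($\xi$)-tree conditioned on its size converges in distribution, for the local topology on $\U$, to \emph{Kesten's tree} $\tau^{\ast}$ associated with $\xi$ (the GW($\xi$)-tree conditioned to survive). Here the local topology is precisely the one induced by the distance $\tilde d$ of Definition \ref{loc_conv_def}, because $\tilde d(t,t') < \frac{1}{r+1}$ if and only if $B_r(t) = B_r(t')$; thus setting $t_\infty := \tau^{\ast}$ gives the claimed convergence $t_n \cvd t_\infty$ in $(\T,\tilde d)$.

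Third I would read off properties (1)–(3) directly from the explicit construction of $\tau^{\ast}$, which is part of the statement being quoted. Kesten's tree is built from a single distinguished infinite \emph{spine}, and this is its only infinite branch, because the subtrees hanging off the spine are ordinary GW($\xi$)-trees and these are a.s. finite by criticality; this is property (1). Along the spine, vertices reproduce according to the size-biased law $\hat\xi$, $\hat\xi(k) = k\,\xi(k)/\E[\xi]$, and since $\hat\xi(3) = 3\cdot\frac13/1 = 1$ we get $\hat\xi = \delta_3$: every spine vertex has exactly three children. Given a spine vertex $u = t_\infty^{k}$ with children $u(1),u(2),u(3)$, Kesten's construction makes the next spine vertex $t_\infty^{k+1}$ uniform among $u(1),u(2),u(3)$ (property (2)) and declares the remaining two children to be the roots of two independent GW($\xi$)-trees, independent of all the rest (property (3)). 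Finally $t_\infty\in\T$ a.s.: the spine vertices have three children by the above, and the off-spine bushes are GW($\xi$)-trees, which lie in $\T$ a.s. by Proposition \ref{unif-GW}.

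\textbf{Main obstacle.} There is essentially no real obstacle once the black-box theorem of \cite{Gil} is cited: the genuine content is checking its hypotheses (criticality of $\xi$; nonemptiness of the conditioning event $\{\tau\in\T_n\}$ for every $n$) and matching the abstract local topology with $\tilde d$. After that, properties (1)–(3) are a transcription of Kesten's construction, the only computation being $\hat\xi = \delta_3$. The one mild subtlety to treat carefully is that the conditioning is stated in terms of the number of internal nodes rather than total size, and that $\xi$ has support $\{0,3\}$; both are harmless here, as noted above.
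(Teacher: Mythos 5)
Your proposal is correct and takes exactly the same route as the paper, which simply states that the proposition "is a consequence of Proposition \ref{unif-GW} and Theorem III.3.1 in \cite{Gil}" without further elaboration. You have usefully unpacked the details that the paper leaves implicit — criticality of $\xi$, the equivalence of the conditioning events, the identification of $\tilde d$ with the local topology, and the computation $\hat\xi = \delta_3$ yielding properties (1)–(3) from Kesten's construction — but the underlying argument is identical.
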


Now to prove Theorem \ref{conv_draw} in the special case \eref{nu0}, it is sufficient to have some continuity of the function $\Psi^0$ defined by \eref{def_psi0}. In fact, this function is not continuous on $\T$. However, Theorem 25.7 in \cite{Bil} says that to transport convergence in distribution via a function $f$, it is sufficient that $f$ be continuous on the support of the limit in distribution. Therefore, given the properties of $t_{\infty}$ listed in Proposition \ref{cv_loc}, the following suffices.

\begin{thm}\label{cont_thm}
Let $\T$ be equipped with the distance of local convergence $\tilde{d}$. Let $t^0 \in \T$ be a tree with exactly one infinite branch, and assume that along the infinite branch there are an infinity of left, middle, and right turns, i.e. if $(u_0,u_1,\cdots)$ is the infinite branch, then 
\beq\label{cond_tree}
\vert \lbrace i; \ u_i = j \rbrace \vert = \infty \mbox{ for any }j=1,2,3.
\eq 
Then the map $\Psi^0 : \T \longrightarrow E$ defined by \eref{def_psi0} is continuous at $t^0$, where $E$ is equipped with the Hausdorff distance.
\end{thm}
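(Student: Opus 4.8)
The strategy is to quantify, for a tree $t^0$ satisfying \eref{cond_tree}, how the whole compact triangulation $\Psi^0(t^0)$ is controlled by the finite ball $B_r(t^0)$. The key geometric observation is the one already used in Section \ref{sec_def}: along the infinite branch $(u_0,u_1,\dots)$, the triangles $\tilde T(u_0) \supseteq \tilde T(u_1) \supseteq \cdots$ form a nested decreasing sequence, and by Lemma \ref{small_diam} (applied to the deterministic sequence of turns, using \eref{cond_tree}) we get $\mathrm{diam}(\tilde T(u_i)) \to 0$. More precisely, the condition that there are infinitely many turns of each type means that after finitely many steps the triangle $\tilde T(u_i)$ shrinks by a fixed ratio, so $\mathrm{diam}(\tilde T(u_i)) \to 0$; this is the only place \eref{cond_tree} is used, and it is essential — if one direction were missing, the nested triangles would converge to a nondegenerate segment and the tail of the drawing would not be negligible.

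First I would fix $\varepsilon>0$ and, using the above, choose $R$ so large that $\mathrm{diam}(\tilde T(u_R)) < \varepsilon$. The tree $B_R(t^0)$ determines, via $\Psi^0$, a finite ``skeleton'' drawing: all internal nodes of $t^0$ at height $\le R$ are mapped to honest vertices, and their positions are determined by the combinatorics of $B_R(t^0)$ together with the fixed labels $(\frac13,\frac13,\frac13)$ (this is why the special case \eref{nu0} simplifies matters — there is no label continuity to worry about, positions are a continuous, indeed locally constant, function of the finite tree). Now suppose $t \in \T$ satisfies $\tilde d(t,t^0)$ small enough that $B_R(t) = B_R(t^0)$. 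Then $\Psi^0(t)$ and $\Psi^0(t^0)$ agree on this skeleton. Outside the skeleton, everything is confined to the filled triangles $\tilde T(v)$ where $v$ ranges over the leaves of $B_R(t^0)$ — because $\Psi^0$ is built iteratively by splitting faces, any vertex or edge created below height $R$ inside a subtree rooted at $v$ lies inside $\tilde T(v)$. There are finitely many such $v$; one of them is $u_R$, with diameter $<\varepsilon$. For the others (the finite subtrees hanging off the branch below level $R$, plus the finitely many subtrees that became part of the skeleton), $B_R(t)=B_R(t^0)$ forces $t$ to have the \emph{same} finite subtrees there as $t^0$ (a subtree of height $\le R-|v| \le R$ is fully seen by $B_R$)... actually I should be slightly more careful: I would pick $R$ large enough that in addition each \emph{finite} subtree of $t^0$ hanging off the infinite branch at height $< R'$ for suitable $R' < R$ is entirely contained in $B_R(t^0)$; since $t^0$ has a unique infinite branch, for any $R'$ only finitely many such subtrees reach above level $R'$, so such an $R$ exists. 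Then the only ``free'' region in which $t$ and $t^0$ can differ is inside $\tilde T(u_R)$ and inside the finitely many subtrees that stick out above level $R$ but are cut off by $B_R$ — but by re-choosing $R'<R$ with all those truncated subtrees rooted at triangles of diameter $<\varepsilon$ as well (again possible by the shrinking-diameter argument along the branch and finiteness), every region where $\Psi^0(t)$ and $\Psi^0(t^0)$ may disagree is contained in a union of triangles each of diameter $<\varepsilon$, and these triangles are \emph{common} to both drawings (they are faces of the common skeleton). Hence every point of $\Psi^0(t)$ is within $\varepsilon$ of $\Psi^0(t^0)$ and vice versa: $d_H(\Psi^0(t),\Psi^0(t^0)) \le \varepsilon$.

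To assemble this cleanly I would: (i) state and prove the confinement lemma — for a coordinate-labelled tree and a node $v$, all vertices/edges coming from $\theta_v(t\bullet)$ lie in $\tilde T(v)$, which is immediate from the inductive construction of $\Psi_n$ in Section \ref{sec_def}; (ii) prove that along the branch $\mathrm{diam}(\tilde T(u_i)) \to 0$ under \eref{cond_tree}, by the nested-triangles argument of Lemma \ref{small_diam}; (iii) prove the finiteness fact that, for each $R'$, only finitely many subtrees hanging off the infinite branch reach height $>R'$ (immediate from uniqueness of the infinite branch); (iv) combine: given $\varepsilon$, use (ii) and (iii) to pick $R'$ so that every triangle $\tilde T(v)$ for $v$ a leaf of $B_{R'}(t^0)$ except those on the infinite branch at level exactly $R'$... and then pick $R \ge R'$ large enough to contain all subtrees reaching above $R'$, wrap up as above. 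The main obstacle is bookkeeping: making precise the claim that $B_R(t) = B_R(t^0)$ pins down $\Psi^0(t)$ up to a union of small common triangles, i.e. carefully identifying exactly which regions are ``free'' and checking each is small. Once the confinement lemma and the diameter-decay along the branch are in hand, the Hausdorff estimate is routine, and continuity at $t^0$ follows since $\tilde d(t,t^0) < \frac1{R+1}$ implies $B_R(t) = B_R(t^0)$. For the general splitting law $\nu$ one adds the observation that positions of skeleton vertices depend continuously on the labels $(P(u))_{|u|\le R}$, which are however fixed in $\Psi^0$; in the general setting one would combine this with the label-continuity already exploited in the proof of Theorem \ref{bij_drawing_markedtree}.
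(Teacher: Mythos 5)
Your argument follows the paper's proof essentially exactly: Lemma \ref{small_infinite_branch} gives the diameter decay $\mathrm{diam}(\tilde T(u_k)) \to 0$ along the infinite branch, and then local agreement $B_R(t)=B_R(t^0)$ forces the two compact triangulations to agree outside a common triangle $\tilde T(u^k)$ of diameter $<\varepsilon$, giving $d_H \le \varepsilon$. The only difference is that you make explicit the bookkeeping — that the finitely many finite subtrees hanging off the branch below level $k$ must be absorbed into $B_R(t^0)$, so that $R$ must be taken larger than both $k$ and the maximal height of those subtrees — which the paper compresses into the single phrase ``by the assumptions made on $t^0$ and by definition of the distance $\tilde d$''.
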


\begin{rem}
In the general case where $\nu \neq \nu^0$ this theorem should be re-stated as a continuity theorem of a function which maps the set of distributions on $\ft$  to the set of distributions on $E$, both sets equipped with the topology of weak convergence. The path of the proof remains unchanged, though formula \eref{dist_bar} is more complicated as is therefore the proof of Lemma \ref{small_infinite_branch}.
\end{rem}

\begin{proof}
Let $t^0$ be as in the statement of the Theorem \ref{cont_thm} and write $(u_0,u_1, \cdots) $ for its infinite branch. Let $(t_n)$ be a sequence of trees in $\T$ such that $t_n \rightarrow t^0$ as $n$ tends to infinity for the distance $\tilde{d}$. Define $m_n = \Psi^0(t_n)$ and $m^0 = \Psi^0(t^0)$. We wish to show that $m_n \rightarrow m^0$ for the Hausdorff distance. Recall from Definition \ref{def_triangles} that for a node $u \in t$, we write $\tilde{T}(u)$ for the corresponding (filled) triangle in the compact triangulation.

\begin{lem}\label{small_infinite_branch}
Let $(u_0,u_1,\cdots)$ be the infinite branch of $t^0$ satisfying condition \eref{cond_tree}. Then
\[ \mathrm{diam}\left(\tilde{T}((u_0,u_1, \cdots ,u_k))\right) \longrightarrow 0, \quad \mbox{as } k \rightarrow \infty. \]
That is, the diameter of the triangle corresponding to the $k$-th node of the infinite branch of $t^0$ tends to zero as $k$ tends to infinity.
\end{lem}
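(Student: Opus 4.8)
The plan is to track how the triangle $\tilde{T}(u_k)$ (where $u_k=(u_0,\dots,u_k)$ is the $k$-th node on the infinite branch) shrinks as we follow the branch. The key observation is that by the local labelling rule (d) in Definition \ref{def_pltrees}(2), passing from $u_k$ to $u_{k+1}=u_k(i)$ replaces exactly one of the three vertices of $\tilde{T}(u_k)$ by its barycentre $\C(u_k).P(u_k) = \frac13(\C_1+\C_2+\C_3)$ (here using $\nu^0$), keeping the other two vertices fixed. Thus $\tilde{T}(u_{k+1})$ is the triangle obtained from $\tilde{T}(u_k)$ by moving one corner to the centroid.

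First I would record the elementary geometric fact: if $\Delta = (X_1,X_2,X_3)$ is a triangle with centroid $G$, then the triangle obtained by replacing $X_i$ with $G$ has diameter at most $c$ times $\mathrm{diam}(\Delta)$ for \emph{some} absolute constant $c<1$ \emph{provided the replacement is of the "correct" type} — but in fact a single such move does not shrink the diameter by a fixed factor (e.g.\ replacing $X_1$ by $G$ when $X_2,X_3$ realise the diameter leaves the diameter essentially unchanged). This is precisely why condition \eqref{cond_tree} is needed. The right statement is: \emph{any three consecutive moves that include at least one move of each of the three types $i=1,2,3$ shrink the diameter by a fixed factor $c<1$}. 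More robustly, one shows that over a window of $k$ steps in which all three turn-types occur, the diameter contracts by a factor bounded away from $1$. Concretely, after a move replacing $X_1$ by $G$, the new triangle has vertices $G, X_2, X_3$; a subsequent move of a different type, say replacing $X_2$, gives $G, G', X_3$ where $G'$ is the centroid of $G,X_2,X_3$ — and one checks $G,G'$ are both "interior" points forcing all pairwise distances to drop. I would make this quantitative by bounding $\mathrm{diam}$ in terms of, say, the area or the sum of squared side lengths, and showing each full cycle of three distinct turn-types multiplies it by at most some explicit $\rho<1$.

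Then the argument concludes as follows. By \eqref{cond_tree}, the infinite branch contains infinitely many disjoint blocks of consecutive indices, each block containing at least one turn of each type $1,2,3$ (greedily chop the branch into such minimal blocks; there are infinitely many because each turn-type appears infinitely often). Applying the contraction estimate across each block, $\mathrm{diam}(\tilde{T}(u_k))$ is bounded above by $\rho^{N(k)} \cdot \mathrm{diam}(\tilde{T})$ where $N(k)\to\infty$ is the number of completed blocks among the first $k$ steps. Since $\mathrm{diam}(\tilde{T}(u_k))$ is non-increasing in $k$ (the triangles are nested by inclusion, as noted after Lemma \ref{small_diam}), this gives the claimed convergence to $0$. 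In the general case $\nu\neq\nu^0$ the only change is that the barycentre $\C(u_k).P(u_k)$ depends on the splitting triplet $P(u_k)\in V_2^*$; since $P(u_k)$ has all coordinates strictly positive, the inserted point is still in the open triangle, and one obtains the same kind of contraction, though the constant now depends on how close $P(u_k)$ can get to the boundary of $V_2$ — which is why the remark notes the proof is "more complicated" there (one would need a uniform lower bound, valid a.s.\ or in the relevant averaged sense).

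The main obstacle I expect is isolating the correct contraction mechanism: a naive "each centroid move shrinks the diameter" is false, so one must argue at the level of blocks of three turn-types, and must choose the right monotone functional of the triangle (diameter itself is awkward; area or the quadratic form $\sum \|X_i - X_j\|^2$ is cleaner) on which a uniform per-block contraction factor can be read off. Verifying that a move of type $i$ followed by a move of type $j\neq i$ genuinely decreases this functional by a definite amount — uniformly over the shape of the triangle, using the centroid's barycentric coordinates $(\tfrac13,\tfrac13,\tfrac13)$ — is the one computation that has to be done carefully; everything else (nestedness, monotonicity, the combinatorial chopping of the branch into infinitely many good blocks) is routine.
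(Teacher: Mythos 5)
Your overall plan --- contract the diameter over blocks of steps containing all three turn types, then chop the branch into infinitely many such blocks --- is exactly the paper's. The gap you acknowledge (the quantitative per-block contraction) is the real content, and the substitutes you float are problematic. Tracking the area is a dead end: inserting the centroid cuts the area by exactly $\tfrac13$ regardless of which corner is replaced, yet along a branch of only type-$1$ turns the diameter stays bounded below by the fixed edge $\|X_2 - X_3\|$. So small area does not control diameter; that is precisely why condition \eref{cond_tree} is needed, and you cannot offload the geometric content onto a functional that contracts unconditionally. The sum of squared side lengths would eventually work, but it is an unnecessary detour.

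The paper's fill-in is direct and works on the diameter itself. Formula \eref{dist_bar} says the distance from a vertex to the centroid of a triangle with side lengths $a,b,c$ is $\tfrac13\sqrt{2a^2+2c^2-b^2}\le \tfrac23\max\{a,b,c\}$, i.e.\ at most $\tfrac23$ of the diameter. Over a block in which all three turn types occur, each vertex of the final triangle is the centroid of some intermediate triangle; since these triangles are nested, for any pair of such vertices the later one is the centroid of an intermediate triangle having the earlier one as a vertex. The formula then bounds every pairwise distance by $\tfrac23$ of the starting diameter, giving $\mathrm{diam}\le\tfrac23\,\mathrm{diam}(\tilde T)$ after one block and $\left(\tfrac23\right)^l\mathrm{diam}(\tilde T)$ after $l$ blocks, which finishes the proof. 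So your instinct that ``diameter itself is awkward'' is misplaced: the centroid--vertex bound makes the diameter the cleanest functional, and no auxiliary quantity is needed.
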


\begin{proof}
Consider Figure \ref{dist_bar1}, where $M$ is the centre of gravity of the triangle. Then a computation gives us
\beq\label{dist_bar}
d = \frac{1}{3} \sqrt{ 2a^2 + 2c^2 - b^2 } \leq \frac{2}{3} \max\{a,b,c\}.
\eq

\begin{figure}[!h]
\centerline{\includegraphics{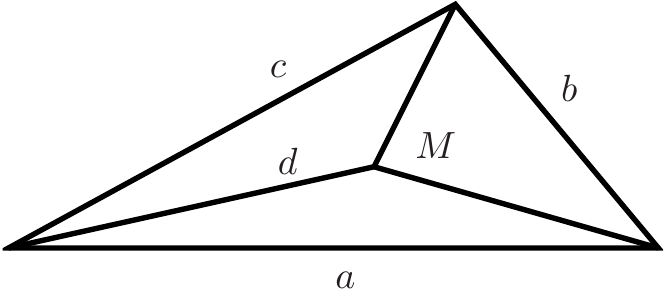}}
\caption{}
\label{dist_bar1}
\end{figure}

It follows that if $k_1 := \min \{ k \geq 0; \, \vert \lbrace i \leq k; \ \forall j \in \{1,2,3\}, \, u_i=j \rbrace \vert \geq 1 \}$ (that is, there is at least one left, right and middle turn in the $k_1$ first steps), then $ \mbox{diam}\left(\tilde{T}((u_0, \cdots, u_{k_1}))\right) \leq \frac{2}{3} \mbox{diam}(\tilde{T}) $. Define inductively, for $l \geq 1$,
\[ k_{l+1} := \min \{ k \geq k_l; \, \forall j \in \{1,2,3\}, \, \vert \lbrace k_l < i \leq k; \  u_i=j \rbrace \vert \geq 1 \}.\]
Condition \eref{cond_tree} implies that for any $l$, $k_l$ is finite. Moreover, we have:
\[ \mbox{diam}\left(\tilde{T}((u_0, \cdots, u_{k_l}))\right) \leq \left( \frac{2}{3} \right)^l \mbox{diam}(\tilde{T}), \]
and taking $l \rightarrow \infty$ completes the proof of Lemma \ref{small_infinite_branch}.
\end{proof}

Now to prove Theorem \ref{cont_thm}, fix some $\varepsilon > 0$, and choose $k$ such that $\mbox{diam}\left(\tilde{T}((u_0, \cdots, u_k))\right) \leq \varepsilon$. Write $u^k=(u_0, \cdots, u_k)$. By the assumptions made on $t^0$ and by definition of the distance $\tilde{d}$, for sufficiently large $n$ the trees $t_n$ and $t^0$ coincide except perhaps on the subtrees $\theta_{u^k}(t_n)$ and $\theta_{u^k}(t^0)$. But this immediately implies that $d_H(m_n,m^0) \leq \mbox{diam}\big(\tilde{T}(u^k)\big) \leq \varepsilon$, and the theorem is proved.
\end{proof}

\section{The increasing case}\label{incr}
In this section we study the asymptotic behaviour of $\qp_n$. That is, at every step, we choose one of the faces of our triangulation uniformly at random and split it into three. We call this the \textit{increasing case}. 

We will see that the asymptotic behaviour of the occupation measure is different to the uniform case. Intuitively this is because, in the uniform case, the distance between two vertices chosen at random tends to zero, since the height of their greatest common ancestor tends to infinity, whereas in the increasing case its law converges to a geometric distribution.

\subsection{The key ingredient: Poisson-Dirichlet fragmentation}\label{frag}

Here we give a construction of the increasing ternary tree as the underlying tree of a fragmentation tree. First, let us describe the deterministic fragmentation tree associated to a sequence of choices $\textbf{u}=(u_i)_{i \geq 1}$ with $u_i \in [0,1)$ for any $i$, and a sequence $\textbf{y}=(y^u)_{u \in \T^{\infty}}$ where for all $u \in \T^{\infty}$, $y^u=(y_1^u,y_2^u,y_3^u)\in V_2^*$.

With these sequences, we associate a sequence $\textbf{F}_n=F(n,\textbf{u},\textbf{y})$ of fragmentation trees with $2n+1$ leaves, each node being marked with a sub-interval of $[0,1)$, as follows.

\begin{itemize}
\item At time 0, $F_0$ is the root tree $\lbrace \emptyset \rbrace$ marked by $I_{\emptyset} = [0,1)$.
\item Assume that at time $k$ the tree $F_k$ is built, and that it is a ternary tree with $2k+1$ leaves, each node $u$ being marked by a semi-open interval $I_u = [a_u,b_u) \subseteq [0,1)$. Moreover, assume that the leaf intervals $(I_l \mbox{, $l$ is a leaf of } F_k)$ form a partition of $[0,1)$. The tree $F_{k+1}$ is then built as follows. Denote $\tilde{l}$ the (unique) leaf of $F_k$ such that $u_{k+1} \in I_{\tilde{l}}$. We give to $\tilde{l}$ three children $\tilde{l}(1),\tilde{l}(2),\tilde{l}(3)$ and mark each of these with a sub-interval of $I_{\tilde{l}}$ whose lengths are prescribed by $y^{\tilde{l}}$. More specifically, if $I_{\tilde{l}} = [a,b)$ then we take $I_{\tilde{l}(1)} = [a, a + (b-a)y^{\tilde{l}}_1)$, $ I_{\tilde{l}(2)} = [a + (b-a)y^{\tilde{l}}_1), a + (b-a)y^{\tilde{l}}_1) + (b-a)y^{\tilde{l}}_2)$, $I_{\tilde{l}(3)} = [a + (b-a)y^{\tilde{l}}_1) + (b-a)y^{\tilde{l}}_2, b)$.
\end{itemize}

Given a fragmentation tree $F$ we will write $\pi(F)$ for the underlying tree (that is, the fragmentation tree with marks removed).
 
\begin{de}
The $2$-dimensional Dirichlet distribution with parameter $\alpha \in (0, +\infty)$, denoted $\Dir_{2}(\alpha)$ is the probability measure on $V_{2}$ with density
\[ f_{\alpha,2}(x_1, x_2, x_3) := \frac{\Gamma(3 \alpha)}{\Gamma(\alpha)^3} \, x_1^{\alpha-1} x_2^{\alpha-1} x_3^{\alpha-1} \]
with respect to the uniform measure on $V_{2}$.
\end{de}

The following fundamental result is due to Albenque and Marckert \cite{AM}.

\begin{thm}\label{frag_tree}
Let $\textbf{U}=(U_i)_{i \geq 1}$ be a sequence of i.i.d. random variables, uniform on $[0,1)$, and $\textbf{Y}=(Y^u)_{u \in \T^{\infty}}$ be a sequence of i.i.d. random variables with $\Dir_2(\frac{1}{2})$ distribution. Now let $\textbf{F}_n = F(n,\textbf{U},\textbf{Y})$ be the sequence of corresponding random fragmentation trees as described above. Then for any $n \geq 0$ the underlying ternary tree $\pi(\textbf{F}_n)$ follows the distribution of an increasing ternary tree on $\T_n$.
\end{thm}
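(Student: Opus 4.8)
The plan is to prove the theorem by induction on $n$, carrying along a strengthened hypothesis: one must control not only the law of the underlying tree $\pi(\textbf{F}_n)$ but also the conditional law of the leaf-interval lengths given the tree shape. Precisely, I would prove by induction on $n$ the conjunction $H_n$ of: \emph{(a)} for every $t\in\T_n$, $\p(\pi(\textbf{F}_n)=t)=\mathbb{H}_n^{\T}(t)$, the probability assigned to $t$ by the increasing ternary tree of size $n$; and \emph{(b)} conditionally on $\{\pi(\textbf{F}_n)=t\}$, the vector $(|I_l|)_{l}$ of leaf-interval lengths of $\textbf{F}_n$, listed in a fixed (say lexicographic) order of the leaves $l$ of $t$, has the $\Dir(\tfrac12,\dots,\tfrac12)$ distribution with $2n+1$ parameters. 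One should first note that a.s. $\pi(\textbf{F}_n)\in\T_n$: every step turns one leaf into an internal node with three children, a.s. $U_{k+1}$ avoids the countable set of interval endpoints, and the $\Dir(\tfrac12,\tfrac12,\tfrac12)$ proportions are a.s. positive, so all splits are proper. The case $n=0$ of $H_n$ is immediate.

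The inductive step rests on three elementary facts about the Dirichlet distribution: \emph{(i)} a coordinate of $\Dir(\tfrac12,\dots,\tfrac12)$ with $2n+1$ parameters has mean $\tfrac1{2n+1}$; \emph{(ii)} size-biasing $\Dir(\alpha_1,\dots,\alpha_m)$ by its $i$-th coordinate produces $\Dir(\alpha_1,\dots,\alpha_i+1,\dots,\alpha_m)$; and \emph{(iii)} the disaggregation property: if $(X_1,\dots,X_m)\sim\Dir(\alpha_1,\dots,\alpha_m)$ and one replaces $X_i$ by $X_i\cdot(W_1,W_2,W_3)$ with $(W_1,W_2,W_3)\sim\Dir(\beta_1,\beta_2,\beta_3)$ independent and $\beta_1+\beta_2+\beta_3=\alpha_i$, the new vector is $\Dir(\dots,\beta_1,\beta_2,\beta_3,\dots)$. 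The arithmetic coincidence making $\Dir_2(\tfrac12)$ the correct splitting law is that size-biasing sends a parameter $\tfrac12$ to $\tfrac32=3\times\tfrac12$, which is exactly the sum of the three parameters of $\Dir(\tfrac12,\tfrac12,\tfrac12)$.

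Assuming $H_n$, fix $t'\in\T_{n+1}$. Each internal node $v$ of $t'$ all of whose children are leaves is a candidate ``last split''; contracting such a $v$ to a leaf $l^*$ yields a predecessor $t\in\T_n$. Conditionally on $\{\pi(\textbf{F}_n)=t\}$, since $U_{n+1}$ is uniform on $[0,1)$ and independent of $\textbf{F}_n$, it lands in $I_{l^*}$ with probability $\E[\,|I_{l^*}|\mid\pi(\textbf{F}_n)=t\,]=\tfrac1{2n+1}$ by $H_n$(b) and fact (i); this matches the increasing-tree transition probability, so summing over the candidates $v$ and invoking $H_n$(a) gives $H_{n+1}$(a). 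For $H_{n+1}$(b), condition further on the identity of $v$: on $\{\pi(\textbf{F}_n)=t,\,U_{n+1}\in I_{l^*}\}$ the law of $(|I_l|)_{l\text{ leaf of }t}$ is size-biased by $|I_{l^*}|$ (fact (ii): the parameter at $l^*$ becomes $\tfrac32$), the splitting proportions $Y^{l^*}\sim\Dir(\tfrac12,\tfrac12,\tfrac12)$ are fresh and independent of $(\textbf{F}_n,U_{n+1})$ since the leaf $l^*$ has not yet been split, and disaggregation (fact (iii)) yields a $\Dir(\tfrac12,\dots,\tfrac12)$ vector of leaf lengths for $t'$ with $2n+3$ parameters. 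This limiting law is the same symmetric Dirichlet for every candidate $v$ (exchangeability makes the position of the three new coordinates irrelevant), so the corresponding mixture over $v$ --- which is exactly the conditional law of the leaf lengths given $\{\pi(\textbf{F}_{n+1})=t'\}$ --- is again $\Dir(\tfrac12,\dots,\tfrac12)$, proving $H_{n+1}$(b) and closing the induction; statement $H_n$(a) for all $n$ is the theorem.

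I expect the main obstacle to be the bookkeeping in part (b): one must track the size-biasing/disaggregation chain cleanly and, in particular, handle the fact that a tree in $\T_{n+1}$ may admit several removable last nodes, so that the conditional leaf-length law is a priori a mixture --- it collapses to a single symmetric Dirichlet only by exchangeability. The independence points (independence of $U_{n+1}$ from $\textbf{F}_n$, and freshness of $Y^{l^*}$) are routine given the i.i.d. structure of $\textbf{U}$ and $\textbf{Y}$, but should be spelled out, since they are what allows the conditional computation of ``which leaf is hit'' to reduce to an expected interval length.
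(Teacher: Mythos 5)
The paper does not prove Theorem~\ref{frag_tree}: it simply attributes the result to Albenque and Marckert and cites \cite{AM}, so there is no in-paper proof to compare against. Your argument is therefore a from-scratch proof, and it is correct. The strengthened inductive hypothesis --- tracking simultaneously the tree law and the conditional $\Dir(\tfrac12,\dots,\tfrac12)$ law of the leaf-interval lengths --- is exactly the right invariant, and the three Dirichlet facts you invoke (symmetric-coordinate mean $\tfrac1{2n+1}$, size-biasing adds $1$ to the relevant parameter, and disaggregation of a coordinate with parameter $\tfrac32$ into three fresh $\Dir(\tfrac12,\tfrac12,\tfrac12)$ pieces) are all standard and fit together without gaps. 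Your remark that $\tfrac12+1=3\cdot\tfrac12$ is the arithmetic identity that makes $\Dir_2(\tfrac12)$ the unique symmetric splitting law preserving the invariant is the key insight and is stated cleanly. Two small things you flag as obstacles are in fact unproblematic: the ``mixture over candidate cherries $v$'' is the same symmetric Dirichlet in every branch because the vector is indexed by the same leaf set of $t'$ in every case (exchangeability of the limit law makes the insertion position irrelevant, as you say); and the freshness of $Y^{l^*}$ follows because, for a fixed $t$, the event $\{\pi(\textbf{F}_n)=t\}$ and the leaf-interval vector are measurable functions of $(U_1,\dots,U_n,\,Y^u:u\in t^0)$ alone, a set disjoint from $\{Y^{l^*}\}$. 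The proof is sound and, as far as one can tell without the [AM] source in hand, is essentially the canonical Dirichlet/size-biasing argument one would expect to find there.
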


One particular consequence of this result is the following. Let $t_n \in \T_n$ be a family of increasing ternary trees. Then the proportion of internal nodes in each of the first three subtrees $(\mathcal{P}_1,\mathcal{P}_2,\mathcal{P}_3)$, where $\mathcal{P}_i$ is the proportion of internal nodes in the $i$-th first subtree of $t_n$, that is $\mathcal{P}_i := \frac{1}{n} \sharp \lbrace u \in t_n^0; \ u=(i,u_2, \cdots, u_h) \rbrace$, converges in distribution to a $\Dir_2(\frac{1}{2})$ distribution.

\subsection{Convergence of the occupation measure}

In this section we show that the occupation measure $\mu_n$ defined by \eref{def_OM} converges to a random measure $\mu$. Let $\nu$ be a splitting law and $(P(u), u \in \T^{\infty})$ a sequence of i.i.d. splitting ratios with distribution $\nu$. Recall the previous construction. Let $\textbf{U}=(U_i)_{i \geq 1}$ be a sequence of i.i.d. random variables, uniform on $[0,1]$, and $\textbf{Y}=(Y^u)_{u \in \T^{\infty}}$ be a sequence of i.i.d. random variables with $\Dir_2(\frac{1}{2})$ distribution. Let $\textbf{F}_n = F(n,\textbf{U},\textbf{Y})$ be the sequence of corresponding random fragmentation trees, and let $I^u$ be the interval marked at the node $u$. If we write $t_n = \pi(\textbf{F}_n)$ for the underlying ternary tree, then $t_n$ is an increasing tree of size $n$ according to the previous theorem. Let $m_n := \Psi_n \circ \Phi_n \big((t_n,(P(u),u \in t_n^0))\big)$ be the corresponding compact triangulation. By definition, $m_n$ has distribution $\qp_n$. Write $\mu_n$ for its occupation measure as defined by \eref{def_OM}. The remark at the end of Section \ref{frag} says that 
\beq\label{OM_trg_conv}
\forall u \in t^0, \  \mu_n\left(\tilde{T}(u)\right) \rightarrow \vert I^u \vert.
\eq 
This is in fact just the law of large numbers. Indeed, the quantity $\mu_n(\tilde{T}(u))$ is the proportion of uniform random variables on $[0,1]$ which fall in a sub-interval $I^u$. as such, with this construction the convergence in \eref{OM_trg_conv} is a.s..

\begin{de}
Let $u_1,u_2, \cdots ,u_k$ be $k$ nodes of $\T^{\infty}$. We say that $u_1, \cdots ,u_k$ are \textbf{covering} if the following two conditions hold:
\begin{enumerate}
\item We have $ \bigcup_i \tilde{T}(u_i) = \tilde{T}$.
\item For any $i \neq j$, $\mathrm{Int}\left(\tilde{T}(u_i)\right) \cap \mathrm{Int}\left(\tilde{T}(u_j)\right) = \emptyset$, where $\mathrm{Int}(S)$ denotes the interior of a set $S$.
\end{enumerate}
\end{de}

Another important property of the occupation measure $\mu_n$ is that it has weight zero along the edges of the triangles $\tilde{T}(u)$. Indeed, the vertices are always added to the interior of the triangles, so that
\beq\label{border_cond}
\forall u \in \T^{\infty}, \ \mu_n\left( \partial\left( \tilde{T}(u)\right)\right) \rightarrow 0,
\eq
where $\partial S$ represents the boundary of a set $S$. Once again, in our construction this convergence holds a.s..

We now state the main result of this section.

\begin{thm}\label{well_def}
Let $(l^u)_{u \in \T^{\infty}}$ be a sequence of positive random variables such that:
\beq\label{covering_prop}
\mbox{for any nodes } u_1,\cdots ,u_k \in \T^{\infty}, \mbox{ if } u_1\cdots ,u_k \mbox{ are covering, then a.s. } l^{u_1} + \cdots + l^{u_k} = 1.
\eq 
Then there exists an a.s. unique random measure $\mu$ on the triangle $\tilde{T}$ such that the following hold a.s.:
\begin{enumerate}
\item For any $u \in \T^{\infty}$, $\mu\left(\tilde{T}(u)\right) = l^u$.
\item For any $u \in \T^{\infty}$, $\mu\left(\partial \left( \tilde{T}(u)\right)\right) = 0$.
\end{enumerate}
\end{thm}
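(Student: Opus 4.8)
\emph{Step 1 (from ``covering'' to a consistency relation).} The first thing I would do is unwind the covering hypothesis into a hierarchical relation on the $l^u$. One checks that $u_1,\dots,u_k$ are covering if and only if they are exactly the leaves of some finite complete ternary subtree of $\T^\infty$ (this is the geometric counterpart of the bijection between stack triangulations and ternary trees used in Section \ref{sec_def}). In particular $\{\emptyset\}$ is covering, and for any $u$ the family $\{ui : i=1,2,3\}\cup\{v : |v|=|u|,\ v\neq u\}$ and the family $\{v : |v|=|u|\}$ are both covering; subtracting the two resulting identities gives
\[
l^{\emptyset}=1,\qquad l^{u}=l^{u1}+l^{u2}+l^{u3}\quad\text{for all }u\in\T^\infty .
\]
So the hypothesis is equivalent to ``$l$ is a mass split along $\T^\infty$, with all coordinates positive''.

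\emph{Step 2 (existence, via Carathéodory extension).} Let $G_u:=\mathrm{Int}(\tilde T(u))$ and let $\mathcal A_n$ be the (finite) algebra of subsets of $\tilde T$ generated by $\{\tilde T(u):|u|\le n\}$, and $\mathcal A=\bigcup_n\mathcal A_n$. Because the drawing is planar (edges never cross) and $\tilde T(u)=\tilde T(u1)\cup\tilde T(u2)\cup\tilde T(u3)$, the atoms of $\mathcal A_n$ are precisely the open triangles $G_u$ with $|u|=n$, together with finitely many relatively open edge–segments and finitely many vertices. Define a set function $\mu_0$ on $\mathcal A$ by declaring $\mu_0(G_u)=l^u$ and $\mu_0=0$ on the lower‑dimensional atoms, extended additively. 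Step 1 makes $\mu_0$ well defined (consistent across levels) and finitely additive. The content is then to check countable additivity of $\mu_0$ on $\mathcal A$, after which the Carathéodory/Hahn--Kolmogorov theorem produces a Borel measure $\mu$ on $\tilde T$ extending $\mu_0$. Since $\mu_0$ already assigns mass $0$ to every edge atom, one gets both $\mu(\tilde T(u))=l^u$ (adding the boundary atoms, which carry no mass) and $\mu(\partial\tilde T(u))=0$.

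\emph{Step 3 (uniqueness).} Here I would argue without any extra hypothesis. The family $\{G_u:u\in\T^\infty\}\cup\{\tilde T\}$ is a $\pi$-system (for incomparable $u,v$ one has $G_u\cap G_v=\emptyset$, and otherwise one is contained in the other). If $\mu,\mu'$ both satisfy (1)--(2) then $\mu(G_u)=\mu(\tilde T(u))-\mu(\partial\tilde T(u))=l^u=\mu'(G_u)$, so by Dynkin's $\pi$--$\lambda$ lemma $\mu=\mu'$ on $\mathcal G:=\sigma(\{G_u\}\cup\{\tilde T\})$. Now $\tilde T\setminus N=\bigcap_m\bigcup_{|v|=m}G_v\in\mathcal G$, where $N:=\bigcup_u\partial\tilde T(u)$, so also $N\in\mathcal G$; moreover the $G_u$ separate the points of $\tilde T\setminus N$ (two such points have distinct ``addresses'' in $\{1,2,3\}^{\N}$), whence every Borel subset of $\tilde T\setminus N$ lies in $\mathcal G$. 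For an arbitrary Borel set $A$, $\mu(A)=\mu(A\setminus N)+\mu(A\cap N)=\mu'(A\setminus N)+0=\mu'(A)$, using (2) to kill $A\cap N\subseteq N$. This gives a.s. uniqueness.

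\emph{Main obstacle.} The delicate point is the countable additivity of $\mu_0$ in Step 2, equivalently property (2): one must rule out mass accumulating on the edges of the triangles. Note this \emph{cannot} follow from positivity and the covering relation alone, because the triangles $\tilde T(u)$ do not shrink along every branch (e.g. $\tilde T(1^n)$ degenerates to a side of $\tilde T$), so one genuinely uses here that $l^u$ are the fragmentation masses $|I^u|$ of Theorem \ref{frag_tree}. For those, $\max_{|u|=n}|I^u|\downarrow 0$ a.s.: a tagged‑fragment estimate gives $\E\big[(\max_{|u|=n}|I^u|)^2\big]\le\E\big[\sum_{|u|=n}|I^u|^2\big]=\E\big[|I^{u_n(V)}|\big]\to 0$ for $V$ uniform on $[0,1)$, and the maximum is monotone in $n$. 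Combining this with the geometric fact that the level-$n$ triangles meeting a fixed edge of $\tilde T(v)$ shrink in diameter (so they lie in an arbitrarily thin neighbourhood of that edge) one controls $\sum_{|u|=n,\ \tilde T(u)\cap\partial\tilde T(v)\neq\emptyset}l^u\to 0$, which is exactly the estimate needed to push $\mu_0$ through Carathéodory and to obtain $\mu(\partial\tilde T(v))=0$. This ``edge has negligible mass'' estimate is where I expect all the real work to be; Steps 1 and 3 are soft.
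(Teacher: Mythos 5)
Your uniqueness argument (Step 3) is essentially the one in the paper: both reduce to Dynkin's $\pi$--$\lambda$ lemma applied to the $\pi$-system of open triangles, after first checking that the $\sigma$-algebra they generate contains all Borel subsets of $\hat T:=\tilde T\setminus N$. Note, though, that the fact you tuck into a parenthesis --- that two points of $\hat T$ have distinct addresses, equivalently that $T^n(x)$ shrinks to $\{x\}$ for $x\in\hat T$ --- is precisely the content of the paper's Lemma \ref{topo}, and its proof is a non-trivial case analysis on which of the three turns occur infinitely often along the address of $x$; it is not a free separation fact and is where the geometric work of the uniqueness proof lives. Your Step 1 is correct and makes explicit what the paper leaves implicit.

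Your ``main obstacle'' correctly identifies a real gap: the paper's one-line existence claim (``a consequence of the covering property and Kolmogorov's extension theorem'') does not yield property (2), and the theorem is in fact false under the stated hypothesis alone. For a counterexample take positive deterministic $l^u$ with $l^\emptyset=1$, $l^u=l^{u1}+l^{u2}+l^{u3}$, but $l^{u1}/l^u\to1$ fast enough that $c:=\lim_n l^{1^n}>0$; since $\bigcap_n\tilde T(1^n)=[BC]$ a.s., any measure with property (1) has $\mu([BC])=c>0$, contradicting (2). So you are right that the proof for $l^u=|I^u|$ must use $\max_{|u|=n}|I^u|\to0$ a.s., and your size-biasing estimate for that is fine. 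However, the geometric lemma you propose to deduce (2) from it is wrong as stated: the level-$n$ triangles meeting a fixed edge do \emph{not} shrink in diameter ($\tilde T(1^n)$ meets $[BC]$ for every $n$ with diameter at least $d(B,C)$), and the sum $\sum_{|u|=n,\,\tilde T(u)\cap\partial\tilde T(v)\ne\emptyset}l^u$ also counts the exponentially many level-$n$ triangles incident to the endpoints of the edges --- controlling that mass is essentially the ``no atoms'' statement of Proposition \ref{at_part}, which the paper derives only \emph{after} Theorem \ref{well_def}, so this route is circular. The cleaner argument avoids vertices entirely: every edge $e$ of $N$ is, as a closed set, the monotone intersection of a single chain of triangles, namely $e=\bigcap_{n\ge0}\tilde T(v\cdot i^n)$ when $e$ is the edge of $\tilde T(v)$ opposite its $i$-th vertex (descending to the $i$-th child pins the two endpoints of $e$ and pulls the opposite vertex onto $e$). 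Hence any measure with property (1) satisfies $\mu(e)\le l^{v\cdot i^n}\le\max_{|u|=n+|v|}l^u\to0$, and since $N$ is a countable union of such edges, (2) follows with no vertex estimate; this both completes your Carath\'eodory construction and closes the gap in the paper's existence claim.
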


Since the random variables $\vert I^u \vert$ satisfy condition \eref{covering_prop}, using the convergences of \eref{OM_trg_conv} and \eref{border_cond} we obtain the following consequence.

\begin{thm}\label{IOM}
Let $l^u := \vert I^u \vert$ for all nodes $u \in \T^{\infty}$, and let $\mu$ be the unique measure of Theorem \ref{well_def} for this choice of $(l^u)$. Then the following convergence
\[ \mu_n \cvd \mu, \quad \mbox{as } n \rightarrow \infty \]
holds in distribution in the set of probability measures on $\tilde{T}$ equipped with the topology of weak convergence.
\end{thm}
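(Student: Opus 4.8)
The plan is to deduce the convergence in distribution of the random probability measures $\mu_n$ from the a.s.\ convergences \eref{OM_trg_conv} and \eref{border_cond}, together with the identification of the limit provided by Theorem \ref{well_def}. Since $\tilde T$ is compact, the space of probability measures on $\tilde T$ equipped with the topology of weak convergence is itself compact and metrizable; hence it suffices to prove convergence along the (a.s.) construction given just before the statement, and then convergence in distribution follows. So I would work entirely on the probability space carrying the sequences $\textbf{U}$ and $\textbf{Y}$, where $m_n$, $\mu_n$, and the limiting data $(l^u)_{u\in\T^\infty}=(|I^u|)_{u\in\T^\infty}$ are all simultaneously defined, and show that $\mu_n\to\mu$ \emph{weakly, a.s.}, where $\mu$ is the measure furnished by Theorem \ref{well_def} applied to this choice of $(l^u)$ (which is legitimate since the $|I^u|$ satisfy the covering property \eref{covering_prop}: if $u_1,\dots,u_k$ are covering then the intervals $I^{u_1},\dots,I^{u_k}$ partition $[0,1)$ up to endpoints, so their lengths sum to $1$).

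The key step is a tightness/approximation argument organized around the algebra generated by the filled triangles $\tilde T(u)$, $u\in\T^\infty$. First I would fix a countable family of "good" events: by \eref{OM_trg_conv} and \eref{border_cond}, on an event of probability $1$ we have simultaneously $\mu_n(\tilde T(u))\to l^u$ and $\mu_n(\partial\tilde T(u))\to 0$ for \emph{every} $u\in\T^\infty$ (a countable intersection). Work on this event from now on. Since $\tilde T$ is compact, the sequence $(\mu_n)$ is tight; let $\mu_\infty$ be any subsequential weak limit along some $(n_j)$. For each fixed $u$, the set $\tilde T(u)$ is closed with $\mu_\infty(\partial\tilde T(u))=0$ — this last fact needs an argument: from the Portmanteau theorem $\mu_\infty(\partial\tilde T(u))\le\liminf_j\mu_{n_j}(\partial\tilde T(u))$ would give the wrong inequality direction for an open set, so instead I fix $\delta>0$, cover $\partial\tilde T(u)$ by the open $\delta$-neighbourhood $N_\delta$, note this neighbourhood is a finite union of (interiors of) triangles $\tilde T(v)$ at some generation depending on $\delta$ — or more cleanly, bound $\mu_\infty(\overline{N_\delta})\le\liminf_j\mu_{n_j}(\overline{N_\delta})$ is again the wrong way; the clean route is: take $v_1,\dots,v_r$ covering and all of generation $g$, each of diameter $\le C(2/3)^{g/3}$ by Lemma \ref{small_infinite_branch}-type estimates, so that the union of those $\tilde T(v_i)$ meeting $\partial\tilde T(u)$ has $\mu_\infty$-mass $=\sum l^{v_i}=\lim_j\sum\mu_{n_j}(\tilde T(v_i))$, and this sum tends to $0$ as $g\to\infty$ because $\mu_{n}(\partial\tilde T(u))\to0$ uniformly enough. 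Granting $\mu_\infty(\partial\tilde T(u))=0$, the Portmanteau theorem for continuity sets gives $\mu_\infty(\tilde T(u))=\lim_j\mu_{n_j}(\tilde T(u))=l^u$ and $\mu_\infty(\partial\tilde T(u))=0$ for every $u\in\T^\infty$. By the uniqueness part of Theorem \ref{well_def}, $\mu_\infty=\mu$. Since every subsequential limit equals $\mu$ and the ambient space is compact metric, $\mu_{n}\to\mu$ weakly, a.s.

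The main obstacle I anticipate is the passage to the limit of the boundary condition, i.e.\ showing $\mu_\infty(\partial\tilde T(u))=0$ for a subsequential limit: the boundary of a triangle is a $\mu_n$-null set in the limit but Portmanteau only controls open sets from below and closed sets from above, and $\partial\tilde T(u)$ is closed, so a naive application gives nothing. The fix is to sandwich $\partial\tilde T(u)$ between small open neighbourhoods built from the covering families of triangles at high generation, whose total $\mu$-mass is a sum of $l^{v}$'s that one controls by refining the partition — this uses that the triangles shrink (the diameter estimate $\le \tfrac23\max$ of side lengths from \eref{dist_bar}, applied along enough subdivisions) so that the neighbourhood's mass $\to 0$. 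Once this is in place, the rest is the standard subsequence argument plus the uniqueness in Theorem \ref{well_def}, and transporting a.s.\ convergence to convergence in distribution is automatic. \qed
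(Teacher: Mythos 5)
Your overall plan is sound and is, as far as one can tell, the one the paper intends: couple all the $\mu_n$ and the limit data $(|I^u|)$ on the single probability space carrying $\textbf{U}$ and $\textbf{Y}$, work on the full-measure event where \eref{OM_trg_conv} and \eref{border_cond} hold for all $u$, use tightness of $(\mu_n)$ on the compact $\tilde T$, and identify every subsequential weak limit with the unique measure of Theorem \ref{well_def}. You also correctly flag where the difficulty lies: converting the fact that each $\mu_n$ gives zero mass to every $\partial\tilde T(u)$ (which is in fact exact for every $n$, not just in the limit, since the insertion points lie in the interiors of faces) into the statement that a subsequential limit $\mu_\infty$ also gives zero mass to $\partial\tilde T(u)$. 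Portmanteau goes the wrong way for a closed set of limiting measure zero, exactly as you say.

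However, the specific argument you sketch for that step does not work as written, for two reasons. First, the uniform diameter bound $\mathrm{diam}\,\tilde T(v)\le C(2/3)^{g/3}$ for all nodes $v$ at generation $g$ is false, even for $\nu=\nu^0$. Lemma \ref{small_infinite_branch} (and formula \eref{dist_bar}) give geometric decay of the diameter only along branches that turn infinitely often in each of the three directions; along a branch that from some point on always takes the same turn, the triangles degenerate to a \emph{line segment} and their diameters stay bounded below. So the triangles at generation $g$ meeting a fixed edge $\partial\tilde T(u)$ need not be contained in a small neighbourhood of that edge, and the set $G_g$ of such triangles does not shrink to $\partial\tilde T(u)$. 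Second, and more seriously, the assertion that ``the union of those $\tilde T(v_i)$ meeting $\partial\tilde T(u)$ has $\mu_\infty$-mass $=\sum l^{v_i}$'' is circular: for a closed set $\tilde T(v_i)$, Portmanteau gives only $\mu_\infty(\tilde T(v_i))\ge l^{v_i}$, and the equality $\mu_\infty(\tilde T(v_i))=l^{v_i}$ already presupposes $\mu_\infty(\partial\tilde T(v_i))=0$, which is precisely what you are trying to prove. With the inequality you actually have, the bound on $\mu_\infty(\partial\tilde T(u))$ goes the wrong way and yields nothing. The covering/shrinkage idea can probably be salvaged — for instance by arguing from the fragmentation structure that with high probability, for $n$ large, all but an $\varepsilon$-fraction of the vertices lie in a fixed finite union of \emph{closed} triangles $\tilde T(w)$ entirely contained in $\mathrm{Int}(\tilde T(u))$ or in the complement of $\tilde T(u)$ — but that requires an argument you have not supplied, and in particular some control on how quantities of the form $\sum_{v}l^v$ over the triangles touching a given edge decay. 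As it stands, the key step remains open. For calibration: the paper itself dispatches this theorem in one sentence ("using the convergences of \eref{OM_trg_conv} and \eref{border_cond} we obtain the following consequence"), so it gives no guidance here either; the value of your write-up is in making the structure explicit, but the boundary-mass lemma still needs a correct proof.
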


\begin{rem}
Theorem \ref{well_def} tells us that the information on the triangles $\tilde{T}(u)$ is sufficient to characterise the measure $\mu$. It is crucial that there is no mass on the edges of the triangles here. Indeed, if there were some mass on the edge $[AB]$ of the original triangle, the knowledge of just the values of $\left(\mu(\tilde{T}(u)), u \in \T^{\infty} \right)$ would not be sufficient to obtain information on how that mass is distributed. 
\end{rem}

\begin{proof}
The existence of $\mu$ is a consequence of the property \eref{covering_prop} and Kolmogorov's extension theorem. Let us prove uniqueness. Let $\mu,\mu'$ be two measures on $\tilde{T}$ satisfying the conditions of Theorem \ref{well_def}. For the remainder of the proof, we work with a fixed $\omega$ in our probability space $\Omega$, where the properties (1) and (2) of Theorem \ref{well_def} hold.

Define the set $\hat{T}$ as the triangle $T$ with the boundaries of all the triangles $\tilde{T}(u)$ removed, that is
\[\hat{T} := \tilde{T} \setminus \bigcup_{u \in \T^{\infty}} \partial \left( \tilde{T}(u) \right).\]
Because of property (2) of Theorem \ref{well_def}, we may view $\mu$ and $\mu'$ as measures on $\hat{T}$. Now  the sets $\left(\tilde{T}(u) \cap \hat{T}, u \in \T^{\infty} \right)$ form a basis of open sets for a certain topology on $\hat{T}$, say $\mathcal{O}'$. We first show the following.

\begin{lem}\label{topo}
Let $\mathcal{O}$ denote the topology induced by the usual metric topology on $\hat{T}$. Then $\mathcal{O}' = \mathcal{O}$.
\end{lem}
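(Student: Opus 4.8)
The plan is to show the two topologies on $\hat{T}$ coincide by proving each is finer than the other. One inclusion is essentially free: each set $\tilde{T}(u) \cap \hat{T}$ is the intersection of $\hat{T}$ with the interior of a filled triangle (since we have removed all boundaries $\partial(\tilde{T}(v))$, and in particular $\partial(\tilde{T}(u))$, so $\tilde{T}(u) \cap \hat{T} = \mathrm{Int}(\tilde{T}(u)) \cap \hat{T}$), hence it is $\mathcal{O}$-open. Therefore every $\mathcal{O}'$-open set is $\mathcal{O}$-open, i.e. $\mathcal{O}' \subseteq \mathcal{O}$.

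**The substantive inclusion.** The work is in showing $\mathcal{O} \subseteq \mathcal{O}'$: given a point $x \in \hat{T}$ and $\varepsilon > 0$, I need to find some node $u \in \T^{\infty}$ with $x \in \tilde{T}(u) \subseteq B(x,\varepsilon)$. The natural approach is to follow the sequence of triangles containing $x$ down the complete infinite tree $\T^{\infty}$. Since $x \in \hat{T}$, the point $x$ lies on no boundary $\partial(\tilde{T}(v))$, so at each level $x$ lies in the interior of exactly one of the three children triangles; this selects a unique infinite branch $(w_0, w_1, w_2, \dots)$ of $\T^{\infty}$ with $x \in \mathrm{Int}(\tilde{T}(w_k))$ for all $k$. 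It then suffices to show $\mathrm{diam}(\tilde{T}(w_k)) \to 0$. Here I would invoke exactly the mechanism of Lemma \ref{small_infinite_branch} together with the contraction estimate \eref{dist_bar}: each time the branch makes a left, middle \emph{and} right turn, the diameter shrinks by a factor $\tfrac23$. The remaining point is that the branch $(w_k)$ necessarily contains infinitely many turns of each of the three types — and this is where one must be slightly careful, because unlike in Theorem \ref{cont_thm} this is not an assumption but something to be proved. If the branch eventually avoided, say, turn $3$, then from some level on $x$ would lie in $\tilde{T}(w_k)$ with $w_k$ never taking the third branch, which forces $x$ to lie in an infinite nested intersection of triangles all sharing a common vertex of $\tilde{T}$ (the ``$C_f$'' vertex that is never subdivided away); but such an intersection, in the barycentric-coordinate picture, degenerates precisely to that shared vertex, which is a corner and hence lies on $\partial(\tilde{T})$ — contradicting $x \in \hat{T}$. (Alternatively: the intersection of triangles that fix one vertex and subdivide the opposite side is a segment from that fixed vertex, whose interior still meets boundaries $\partial(\tilde{T}(v))$, again contradicting $x\in\hat T$.) So condition \eref{cond_tree} holds for this branch, Lemma \ref{small_infinite_branch}'s argument applies verbatim, and $\mathrm{diam}(\tilde{T}(w_k)) \to 0$.

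**Conclusion and the main obstacle.** Once $\mathrm{diam}(\tilde{T}(w_k)) \to 0$, pick $k$ with $\mathrm{diam}(\tilde{T}(w_k)) < \varepsilon$; then $x \in \tilde{T}(w_k) \subseteq B(x,\varepsilon)$, and intersecting with $\hat{T}$ exhibits $x$ in an $\mathcal{O}'$-open subset of the given $\mathcal{O}$-ball. This gives $\mathcal{O} \subseteq \mathcal{O}'$, and combined with the first inclusion, $\mathcal{O} = \mathcal{O}'$. I expect the main obstacle to be the justification that the branch selected by $x$ makes infinitely many turns of each type — i.e. ruling out the degenerate nested intersections that collapse to a vertex or an edge. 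This is really the statement that a point avoiding all the triangle boundaries cannot be ``trapped'' against an un-subdivided side, and it is the only place where the hypothesis $x \in \hat{T}$ (as opposed to $x \in \tilde{T}$) is genuinely used; everything else is either immediate or a direct reuse of \eref{dist_bar} and the argument of Lemma \ref{small_infinite_branch}.
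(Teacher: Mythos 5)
Your proposal follows essentially the same route as the paper's proof: both establish the trivial inclusion $\mathcal{O}'\subseteq\mathcal{O}$, then obtain the reverse by following the nested triangles $T^n(x)$ around a point $x\in\hat T$, reducing to the diameter estimate of Lemma \ref{small_infinite_branch} once the branch determined by $x$ is shown to satisfy \eref{cond_tree}, and proving this last fact by contradiction via the observation that a degenerate limit would force $x$ onto some $\partial\tilde{T}(u)$. The paper's contradiction step is a bit cleaner than yours: it separates two explicit sub-cases — eventually a single repeated direction, where the nested triangles collapse to the opposite edge, versus exactly one direction eventually avoided with the other two occurring infinitely often, where they collapse to the fixed vertex — whereas your main sentence asserts collapse to the fixed vertex in both situations and relegates the edge sub-case to a somewhat misdescribed parenthetical (there two vertices, not one, are fixed), so tightening that case split would align your argument precisely with the paper's.
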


\begin{proof}
First, note that $\mathcal{O}' \subseteq \mathcal{O}$, since for any $u \in \T^{\infty}$, the set $T(u) \cap \hat{T}$ is an open set for the metric topology on $\hat{T}$. To show the converse, we show that 
\beq\label{topo_eq}
\forall O \in \mathcal{O},\, \exists u \in \T^{\infty},\ T(u) \subseteq O.
\eq
Fix $O \in \mathcal{O}$ and $x \in O$. Define $u^n(x)$ to be the unique vertex $u \in \T^{\infty}$ s.t. $|u| = n$ and $x \in T(u)$. The uniqueness of $u^n(x)$ stems from the fact that $x \notin \bigcup_{u \in \T^{\infty}} \partial \left( \tilde{T}(u) \right)$. For simplicity we write $T^n(x) := T(u^n(x))$. Now to show \eref{topo_eq}, it is sufficient to show that
\[ \mathrm{diam} (T^n(x)) \rightarrow 0, \mbox{ as } n \rightarrow \infty.\]

We write, for any $n$, $u^n(x)=(u_1(x),\cdots,u_n(x))$. Notice that by construction, the $u_i(x)$ are well defined (i.e. they do not depend on $n$). Now if we show that the sequence $(u_i(x),i\geq1)$ satisfies condition \eref{cond_tree}, then we can follow the path of the proof of Lemma \ref{small_infinite_branch} to get the desired result. Let us therefore show that
\beq\label{end_pf} \forall j \in \{1,2,3\}, \  \vert \lbrace i; \ u_i(x) = j \rbrace \vert = \infty.
\eq 

We proceed by contradiction. If \eref{end_pf} doesn't hold, then there are two possibilities:
\begin{enumerate}[(1)]
\item There exists $N \in \N$ and $j \in \{1,2,3\}$, such that for all $i \geq N$, $u_i(x)=j$, that is there is exactly one value of $j$ such that $\vert \lbrace i; \ u_i(x) = j \rbrace \vert$ is infinite.
\item There exists $N \in \N$ and $j \in \{1,2,3\}$, such that for all $i \geq N$, $u_i(x) \neq j$ and for $k \in \{1,2,3\} \setminus j$, $\vert \lbrace i; \ u_i(x) = k \rbrace \vert = \infty$, that is there are exactly two values of $j$ such that $\vert \lbrace i; \ u_i(x) = j \rbrace \vert$ is infinite.
\end{enumerate} 

Consider case (1). Let $N \in \N$ so that for example for all $i \geq N$, $u_i(x)=1$. Write $T_i(x) = (A_i(x),B_i(x),C_i(x))$ for any $i$. Now for any $i \geq N$, we have $B_i(x) = B_N(x) :=B(x) $ and $C_i(x) = C_N(x) := C(x)$ (recalling the ordering of triangles after splitting as shown in Figure \ref{trg_order}). Now if $A(x)$ is a limit point of some subsequence of $(A_n(x))_{n \geq N}$ we can show, using similar arguments as in the proof of Lemma \ref{subseq}, that $A(x) \in [B(x) \, C(x)]$. This implies that as $n$ tends to infinity, the distance between $A_n(x)$ and the line segment $[B(x) \, C(x)]$ tends to zero (see Figure \ref{zero_dist} below). But this would imply that $x \in [B(x) \ C(x)]$, which is impossible since $x \notin \bigcup_{u \in \T^{\infty}} \partial \left( \tilde{T}(u) \right)$. Figure \ref{zero_dist} provides an illustration of this case.

\begin{figure}[!h]
\centerline{\includegraphics{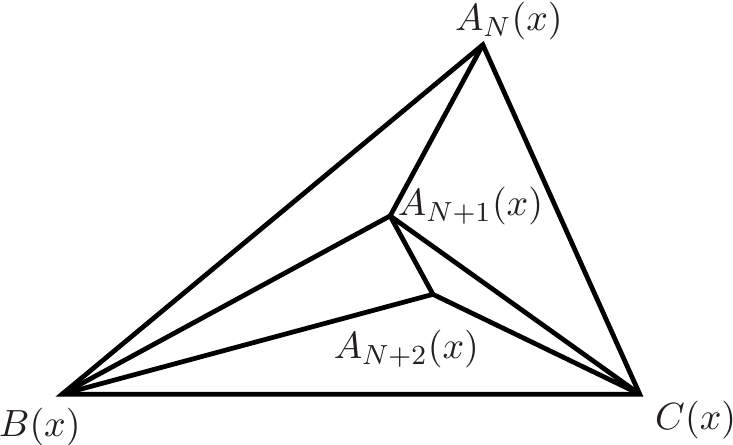}}
\caption{Case (1)}
\label{zero_dist}
\end{figure}

Now consider case (2). We suppose that for $i \geq N$, $u_i(x) \neq 1$ and that $\vert \lbrace i; \ u_i(x) = j \rbrace \vert = \infty$ for $j=1,2$. We still write $T_i(x) = A_i(x),B_i(x),C_i(x))$ for any $i$. Now for any $i \geq N$ we have $A_i(x) = A_N(x) := A(x)$. As above, one shows that the sequences $d(A(x),B_n(x))$, $d(A(x),C_n(x))$ both tend to zero as $n$ tends to infinity, so that we should have $x = A(x)$. But this contradicts once more the fact $x \notin \bigcup_{u \in \T^{\infty}} \partial \left( \tilde{T}(u) \right)$. Thus, we have proved \eref{end_pf} which concludes the proof of Lemma \ref{topo}.
\end{proof}

To complete the proof of Theorem \ref{well_def}, we use Dynkin's $\pi - \lambda$ theorem (Theorem 3.2 in \cite{Bil}). For any set $S$ of compact subspaces of $\tilde{T}$, denote $\sigma(S)$ the $\sigma$-algebra generated by $S$, so that $\sigma(\mathcal{O})$ is the usual Borel $\sigma$-algebra on $\tilde{T}$. To prove Theorem \ref{well_def}, it is sufficient to show that
\beq\label{s-algebra}
\sigma \left( \{ \tilde{T}(u); \, u \in \T^{\infty} \} \right) = \sigma(\mathcal{O}).
\eq
But $\sigma \left( \{ \tilde{T}(u); \, u \in \T^{\infty} \} \right)$ is a Dynkin system (since it is a $\sigma$-algebra). Moreover, since $\T^{\infty}$ is countable, Lemma \ref{topo} implies that 
\[ \mathcal{O} \subset \sigma \left( \{ \tilde{T}(u); \, u \in \T^{\infty} \} \right), \]
and Dynkin's theorem immediately implies \eref{s-algebra}.
\end{proof}

\subsection{Properties of the limit measure}\label{IMP}

We have seen that the occupation measure $\mu_n$ converges in distribution to a limit measure $\mu$, which satisfies $\mu(\tilde{T}(u))=\vert I^u \vert$ where $I^u$ is the interval marking the node $u$ in the fragmentation construction introduced in Section \ref{frag}. Moreover, for any node $u$, $\mu(\partial \tilde{T}(u)) = 0$. In this section, we determine additional properties of the measure $\mu$.

\begin{pro}\label{at_part}
The atomic part of $\mu$ is a.s. zero. That is, a.s. there is no point $x \in T$ s.t. $\mu( \lbrace x \rbrace) >0$.
\end{pro}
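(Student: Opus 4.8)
The plan is to show that for any fixed point $x \in T$, $\mu(\{x\}) = 0$ almost surely, and then conclude by a countable-union / Fubini-type argument that a.s.\ $\mu$ has no atom at all. The key is to exploit the self-similar structure of the fragmentation: the mass $\mu(\tilde T(u)) = |I^u|$ is multiplicative along the tree, and at each internal node the three children receive proportions distributed as $\mathrm{Dir}_2(\tfrac12)$.

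\medskip

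\noindent\textbf{Step 1: reduce to a single point via the sequence of nested triangles.} For a point $x \in \hat T$ (i.e.\ $x$ not on the boundary of any $\tilde T(u)$), we showed in the proof of Lemma \ref{topo} that there is a unique infinite sequence of nodes $u^n(x) = (u_1(x), \cdots, u_n(x))$ with $x \in T(u^n(x))$, and that this sequence satisfies condition \eref{cond_tree}. Since $\{x\} = \bigcap_n \tilde T(u^n(x))$ (because $\mathrm{diam}(T^n(x)) \to 0$), and the $\tilde T(u^n(x))$ are nested, we get by continuity of the measure
\[ \mu(\{x\}) = \lim_{n \to \infty} \mu\left(\tilde T(u^n(x))\right) = \lim_{n \to \infty} |I^{u^n(x)}|. \]
So it suffices to show this limit is $0$ a.s. For $x$ on the boundary of some $\tilde T(u)$ we already know $\mu(\{x\}) \le \mu(\partial \tilde T(u)) = 0$, so these points cause no problem.

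\medskip

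\noindent\textbf{Step 2: show the nested interval lengths shrink to zero.} Along the branch $(u_1(x), u_2(x), \cdots)$, the length $|I^{u^n(x)}|$ is the product of $n$ ratios, where the $k$-th ratio is the coordinate of a fresh $\mathrm{Dir}_2(\tfrac12)$ vector indexed by which child we descend into. The cleanest way: either fix $x$ in advance and observe that the descent directions $u_k(x)$ are determined by $x$ and the $Y$-variables, or —simpler— prove the stronger statement that a.s., \emph{simultaneously for every} infinite branch $(v_1, v_2, \cdots)$ of $\T^\infty$, the product $\prod_{k=1}^n Y^{(v_1,\cdots,v_{k-1})}_{v_k} \to 0$. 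To get this uniformly, note that for any node $w$, $\log \max_i Y^w_i$ has a fixed (negative-mean) distribution; by a Borel--Cantelli or a martingale argument, $\log |I^{u^n(x)}| = \sum_{k=1}^n \log Y^{(u_1,\cdots,u_{k-1})}_{u_k} \le \sum_{k=1}^n \log \max_i Y^{(u_1,\cdots,u_{k-1})}_{u_k} \to -\infty$ a.s., since the summands are i.i.d.\ (across distinct nodes along a branch) with strictly negative mean (as $\max_i Y^w_i < 1$ a.s.\ and is bounded away from $1$ in expectation of its log). Hence $|I^{u^n(x)}| \to 0$ a.s.

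\medskip

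\noindent\textbf{Step 3: from fixed $x$ to "no atom".} Having shown $\mu(\{x\}) = 0$ a.s.\ for each fixed $x$, we upgrade to: a.s., $\mu$ has no atom. Apply Fubini on $\Omega \times \tilde T$ with the product of $\mathbb P$ and Lebesgue measure $\mathrm{Leb}$ on $\tilde T$: $\int_\Omega \mathrm{Leb}\{x : \mu(\omega)(\{x\}) > 0\}\, d\mathbb P(\omega) = \int_{\tilde T} \mathbb P(\mu(\{x\}) > 0)\, d\mathrm{Leb}(x) = 0$, so a.s.\ the set of atoms has Lebesgue measure zero. But this does not immediately preclude atoms (a measure can have atoms on a Lebesgue-null set). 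To genuinely rule out atoms, I would instead argue directly: the set of potential atom locations is contained in $\bigcup_{u} \partial \tilde T(u)$ (countable union of null sets, hence $\mu$-null) together with $\hat T$; and on $\hat T$, Step 2 (in its uniform-over-branches form) shows $\sup_{|u| = n} |I^u| \to 0$ a.s., which forces $\mu(\{x\}) = 0$ for \emph{every} $x \in \hat T$ simultaneously on that almost-sure event. Combining, a.s.\ $\mu(\{x\}) = 0$ for all $x$.

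\medskip

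\noindent\textbf{Main obstacle.} The delicate point is Step 2 in its uniform form: controlling $\sup_{|u|=n} |I^u|$ simultaneously over all $3^n$ nodes and showing it tends to $0$ a.s. A naive union bound over exponentially many nodes could fail, so one needs a genuine branching-process estimate — e.g.\ a first-moment bound on the number of nodes $u$ at level $n$ with $|I^u| > \epsilon^n$ for suitable $\epsilon$, using $\mathbb E[(\max_i Y^w_i)^s] < (\text{something} < 1)$ for an appropriate exponent $s$, à la large-deviations / Biggins-type argument for branching random walk. This is exactly the kind of estimate that also underlies the Hausdorff dimension bound in Theorem \ref{HDim}, so I expect the proof to share machinery with that result.
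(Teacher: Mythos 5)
You correctly reduce the problem to showing that $\sup_{|u|=n} |I^u| \to 0$ a.s.\ (uniformly over all $3^n$ nodes at depth $n$), and you correctly flag that this uniform control is the whole difficulty: a naive Fubini argument only rules out Lebesgue-a.e.\ atoms, and a per-branch law-of-large-numbers argument (your Step~2) does not survive a union over exponentially many branches. This is exactly the reduction the paper makes, reformulated there as $\inf_{\tau \in T^{(n)}} (-\log \mu(\tau)) \to +\infty$.

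The gap is that you leave this key uniform estimate unproven. You gesture at ``a first-moment bound\dots à la large-deviations / Biggins-type argument for branching random walk'' without supplying it, and your suggestion that it shares machinery with Theorem~\ref{HDim} is not quite what happens (that proof invokes Barral's theorem directly). The paper closes the gap by observing that $\inf_{|u|=n}(-\log |I^u|)$ is precisely the first-birth time at generation $n$ for a branching random walk with three children per individual and i.i.d.\ displacements $-\log \mathcal{P}_i$, $\mathcal{P}$ a $\Dir_2(\tfrac12)$ marginal, and then applies Kingman's first-birth theorem \cite{King}: choosing $\theta_0$ with $\Phi(\theta_0) = 3\,\E[\mathcal{P}_1^{\theta_0}] \le (2e)^{-1}$ and $a=\theta_0^{-1}$ gives $\liminf_n \inf_{|u|=n}(-\log|I^u|)/n \ge a > 0$ a.s., which yields the claim. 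So your diagnosis of where the proof must be closed is exactly right, but the ``main obstacle'' is the proof; without Kingman (or an equivalent branching estimate such as a first-moment bound on $\sharp\{u : |u|=n, \, |I^u| > e^{-an}\}$ via a Markov inequality at level $\theta_0$), the argument is incomplete.
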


\begin{proof}
Define $T^{(n)} := \lbrace \tilde{T}(u); \, u \in \T^{\infty}, |u| = n \rbrace $, that is the set of triangles `` at height $n$ ". It suffices to show that a.s.
\beq\label{proof_at}
\sup_{\tau \in T^{(n)}} \mu(\tau) \rightarrow 0, \quad \mbox{as } n \rightarrow \infty.
\eq
Indeed, if there exists with positive probability some $x \in T$ such that $\mu(\{x \}) >0$, then, using the notation $T^n(x)$ introduced in the proof of Theorem \ref{well_def}, with positive probability
\[ \limsup_n \sup_{\tau \in T^{(n)}} \mu(\tau) \geq \limsup_n \mu (T_n(x)) \]
\[ \geq \limsup_n \mu( \{x \}) = \mu( \{x \}) > 0, \]
and therefore proving \eref{proof_at} is sufficient.

For this, we will use a branching process result. Notice that if $|u| = n$, then the law of $\mu(\tilde{T}(u))$ is $\mathcal{P}_1 \cdots \mathcal{P}_n$ where the $\mathcal{P}_i$ are i.i.d. random variables with distribution the first (or equivalently, any) marginal of a $\Dir_2(\frac{1}{2})$ distribution. We shall show that
\beq\label{proof_at'} 
\inf_{\tau \in T^{(n)}} -\log(\mu(\tau)) \rightarrow +\infty, \quad \mbox{as } n \rightarrow \infty,
\eq
which is equivalent to \eref{proof_at}.

Now the law of $\inf_{\tau \in T^{(n)}} -\log(\mu(\tau))$ is the law of the time of first birth at generation $n$ for a branching process with birth times $-\log(P_1)$, $-\log(P_2)$, $-\log(P_3)$ where $(P_1,P_2,P_3)$ has law $\Dir_2(\frac{1}{2})$ (and every vertex has exactly three children).

We define $\Phi$ to be the Laplace transform of the reproduction law:
\[\Phi(\theta):= \E\left[\sum_{i=1}^3 \exp(- \theta.(-\log(\mathcal{P}_i)))\right].\]
Thus $\Phi(\theta) = 3\E\left((\mathcal{P}_1)^{\theta}\right)$. Kingman proved in \cite{King} that if $a,\theta>0$ satisfy $\Phi(\theta)e^{\theta a} < 1$ then the first birth process $(B_n)$ satisfies $\liminf_n \frac{B_n}{n} \geq a$.

Now since $\Phi(\theta)$ tends to zero as $\theta$ tends to infinity, we can choose $\theta_0$ such that $\Phi(\theta_0) \leq (2e)^{-1}$ and taking $a=\theta_0^{-1}$ will give us the desired result. This is clearly enough to show \eref{proof_at'} (since $a>0$), and thus Proposition \ref{at_part} is proved.
\end{proof}

It is a well known fact that any Borel measure $\nu$ on $\R^d$ can be decomposed as $\nu = \nu_{Leb} + \nu_{atom} + \nu_{sing} $, where $\nu_{Leb}$ is absolutely continuous with respect to the Lebesgue measure on $\R^d$, $\nu_{atom}$ is a countable (weighted) sum of Dirac atoms, and $\nu_{sing}$ has no atoms and is singular with respect to the Lebesgue measure on $\R^d$. The previous theorem means that a.s. $\mu_{atom} = 0$. We seek additional information on $\mu$.

\begin{de}\label{HDimdef}
Let $M$ be a metric space, and $X \subseteq M$ a subspace of $M$. For any $d \geq 0$, we define the \emph{$d$-dimensional Hausdorff measure} $\mu_d$ of $X$ by
\[ \mu_d(X) = \lim_{ \varepsilon \rightarrow 0} \inf \sum_{i \in I} (\mathrm{diam}(U_i))^d,\]
where the infimum is taken over all countable coverings $(U_i)_{i \in I}$ of $X$ such that for any $i \in I$, $\mathrm{diam} (U_i) < \varepsilon$. This infimum is non decreasing as $\varepsilon$ decreases, thus the limit exists.
The \emph{Hausdorff dimension} $\dim_H$ of $X$ is then defined by
\[ \dm(X) = \sup \{ d \geq 0; 0 < \mu_d(X) < \infty \}. \]
\end{de}

\begin{thm}\label{HDim}
The limit measure $\mu$ is supported by a subset $S_{\nu}(\mu)$ of $\tilde{T}$ which satisfies
\[ \dm(S_{\nu}(\mu)) = \frac{2}{3 \E(-\log(P_1))} \mbox{ a.s.} ,\]
where $P=(P_1,P_2,P_3)$ is a splitting ratio with distribution $\nu$.
\end{thm}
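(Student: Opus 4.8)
The plan is to realise $S_\nu(\mu)$ as the set of \emph{generic} points of $\tilde T$, and then to pin down its Hausdorff dimension by combining an economical covering (upper bound) with the mass distribution principle (lower bound); the branching input throughout is Kingman's theorem on the first-birth problem, used exactly as in the proof of Proposition \ref{at_part}. First I would spell out the two multiplicative structures the fragmentation puts on the tree $\T^{\infty}$. Keeping the notation $u^n(x)=(u_1(x),\dots,u_n(x))$ and $T^n(x)=\tilde T(u^n(x))$ from the proof of Theorem \ref{well_def}, for $x\in\hat T$ the mass $\mu(T^n(x))=\vert I^{u^n(x)}\vert$ is a product of $n$ coordinates of independent $\Dir_2(\tfrac12)$-vectors read along the branch of $x$, while $\mathrm{area}(T^n(x))$ is the analogous product of coordinates of independent $\nu$-vectors; the two products share the index sequence $u_1(x),u_2(x),\dots$ but are otherwise independent. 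When $x$ has law $\mu$, that index sequence is (by the symmetry of $\Dir_2(\tfrac12)$) i.i.d.\ uniform on $\{1,2,3\}$, the coordinate of $\vert I^{\cdot}\vert$ picked up at each step is \emph{size-biased} — hence a $\mathrm{Beta}(\tfrac32,1)$ variable, whose $-\log$ has mean $\tfrac23$ — and, the index being independent of the $\nu$-vector, the coordinate of $\mathrm{area}$ picked up has $\E(-\log(\cdot))=\E(-\log P_1)$. The strong law of large numbers then gives, $\mu$-a.s.,
\[
-\tfrac1n\log\mu(T^n(x))\longrightarrow\tfrac23,\qquad -\tfrac1n\log\mathrm{area}(T^n(x))\longrightarrow\E(-\log P_1),
\]
together with convergence of the turn-frequencies of the branch to $(\tfrac13,\tfrac13,\tfrac13)$. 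I would define $S_\nu(\mu)$ to be the set of $x\in\hat T$ for which all three limits hold; by the above $\mu(S_\nu(\mu))=1$, so $\mu$ is carried by $S_\nu(\mu)$.

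The geometric core of the proof — which I expect to be the main obstacle — is to convert the transparent multiplicative law of $\mathrm{area}(\tilde T(u))$ into control of the true Euclidean shape of the (possibly very elongated) triangle $\tilde T(u)$, $\mu$-a.s.\ rather than merely along a deterministic subsequence. The point is that after a split each of the three children has one side inherited verbatim from its parent, so $\mathrm{diam}(\tilde T(u^n))$ need not shrink while the branch keeps turning the same way; feeding the turn-frequencies $(\tfrac13,\tfrac13,\tfrac13)$ and the elementary estimate behind \eref{dist_bar} (and its analogue for a general splitting triplet in $V_2^*$) into a coupon-collector argument upgrades Lemma \ref{small_infinite_branch} to a \emph{rate}: along a generic branch the size of $\tilde T(u^n)$ relevant to the Hausdorff sum decays exponentially and is controlled by $\mathrm{area}(\tilde T(u^n))$, and moreover $B(x,r)$ meets only $O(1)$ triangles $\tilde T(v)$, $\vert v\vert=n(r)$, once $n(r)$ is the first level at which that size drops below $r$. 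This is exactly what lets one pass between $\mu(B(x,r))$ and $\mu(T^{n(r)}(x))$ at matching scales and is what produces the exponent $\E(-\log P_1)$ in the statement.

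For the upper bound $\dim_H(S_\nu(\mu))\le\frac{2}{3\E(-\log P_1)}$, fix $\varepsilon>0$: writing $S_\nu(\mu)$ as the increasing union of the sets on which the above asymptotics hold from a fixed level on, for $n$ large each such set is covered by the triangles $\tilde T(u)$, $\vert u\vert=n$, it meets, each of which has $\vert I^u\vert\ge e^{-(2/3+\varepsilon)n}$; since $\sum_{\vert u\vert=n}\vert I^u\vert=1$ there are at most $e^{(2/3+\varepsilon)n}$ of them, while by the previous step each has size $e^{-(\E(-\log P_1)+o(1))n}$. The corresponding Hausdorff sum is then bounded by $\exp\!\big(n(\tfrac23+\varepsilon-s\,\E(-\log P_1))+o(n)\big)$, which tends to $0$ for every $s>\frac{2}{3\E(-\log P_1)}$ after letting $\varepsilon\downarrow0$, and a countable union does no harm. (In the general $\nu$ one additionally absorbs the $o(n)$ fluctuation of $\mathrm{area}$ by a large-deviation, i.e.\ first-birth, bound for the $\nu$-branching walk, in the spirit of Proposition \ref{at_part}.)

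For the lower bound $\dim_H(S_\nu(\mu))\ge\frac{2}{3\E(-\log P_1)}$, the mass distribution principle reduces the claim to showing that for $\mu$-a.e.\ $x$ and all small $r$, $\mu(B(x,r))\le r^{\,\frac{2}{3\E(-\log P_1)}-\varepsilon}$. Let $n(r)$ be the first level at which the relevant size of $\tilde T(u^n(x))$ drops below $r$; by the geometric step $n(r)\sim\frac{\log(1/r)}{\E(-\log P_1)}$ and $B(x,r)$ meets only $O(1)$ triangles of that level, each of $\mu$-mass at most $\sup_{\vert u\vert=n(r)}\vert I^u\vert$. Applying Kingman's first-birth theorem to the branching walk $V(u)=-\log\vert I^u\vert$ — with three children displaced by the $-\log$ of the coordinates of a $\Dir_2(\tfrac12)$-vector, so that the relevant Laplace transform is $\Phi(\theta)=3\,\E(\mathcal{P}_1^{\theta})$, exactly the function appearing in Proposition \ref{at_part} — yields $\sup_{\vert u\vert=n}\vert I^u\vert\le e^{-(2/3+o(1))n}$, hence $\mu(B(x,r))\le e^{-(2/3+o(1))n(r)}=r^{\frac{2}{3\E(-\log P_1)}-o(1)}$. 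Once the geometric step is in hand, the rest is the now-standard interplay of multifractal bookkeeping with the first-birth estimate that already underlies Proposition \ref{at_part}.
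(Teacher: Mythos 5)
Your proof takes a genuinely different route from the paper's: the paper's entire proof consists of invoking Corollary~IV.b of Barral \cite{Bar}, identifying the cascade weights $(W_0,W_1,W_2)\sim\Dir_2(\tfrac12)$ and the scaling data $(L_0,L_1,L_2)=P\sim\nu$, and reading off the dimension formula $\E(\sum W_j\log W_j)/\E(\sum W_j\log L_j)$. You instead attempt a self-contained cover/mass-distribution argument with Kingman's first-birth theorem, which is a legitimate and in some ways more illuminating strategy. The entropy side of your computation is correct: the size-biased marginal of $\Dir_2(\tfrac12)$ is $\mathrm{Beta}(\tfrac32,1)$, whose $-\log$ has mean $\tfrac23$, matching $-\E(W_0\log W_0)/\E(W_0)$. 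However, the part you flag as the ``geometric core'' -- and which Barral's result quietly absorbs -- is not merely unfinished; the scale you plug into the Hausdorff sum is wrong by a substantial margin, and no version of the argument you sketch can repair it.

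Concretely, you take the Euclidean diameter of $\tilde T(u^n(x))$ to decay at rate $\E(-\log P_1)$, which is the rate of decay of the \emph{area} $\mathrm{area}(\tilde T(u^n(x)))=\prod_{i\le n}P_{u_i}(u^{i-1})$. But a triangle satisfies $\mathrm{area}\asymp\mathrm{diam}\cdot\mathrm{width}$ with $\mathrm{width}\le\mathrm{diam}$, so if the diameter and width decay at exponential rates $c_d$ and $c_w$, then $c_d+c_w=\E(-\log P_1)$ and $c_w\ge c_d$, forcing $c_d\le\tfrac12\E(-\log P_1)$. Thus the diameter decays strictly slower than the area, and your covering sum $\exp\bigl(n(\tfrac23+\varepsilon-s\,\E(-\log P_1))+o(n)\bigr)$ should instead read $\exp\bigl(n(\tfrac23+\varepsilon-s\,c_d)+o(n)\bigr)$ with $c_d\le\tfrac12\E(-\log P_1)$, so the covering argument as you have set it up cannot produce an upper bound of $2/(3\E(-\log P_1))$; the same mismatch appears in the lower bound where you set $n(r)\sim\log(1/r)/\E(-\log P_1)$. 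The ``coupon-collector upgrade'' of Lemma~\ref{small_infinite_branch} does not rescue this either: that lemma gives the deterministic factor $\tfrac23$ after each completed round of the three turn types, so on a branch with turn-frequency $\tfrac13$ it yields a diameter rate on the order of $\tfrac{2}{11}\log\tfrac32$, which is roughly $0.07$ for $\nu=\nu^0$ -- far too slow compared with $\log 3\approx 1.1$, and nowhere near the precise rate. The quantity you actually need is the second Lyapunov exponent of the random matrix product of the $M^{(i)}_\nu$ from Proposition~\ref{mat_prod} (the rate at which the three rows coalesce along a $\mu$-typical branch), together with control of the number of level-$n$ triangles meeting a ball of radius comparable to that scale; identifying this exponent and relating it to $\E(-\log P_1)$ is precisely the step that your sketch asserts but does not supply, and it is the step that Barral's Corollary~IV.b is being cited to handle.
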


Using a convexity inequality we get that
\[ \frac{2}{3 \E(-\log(P_1))} \leq \frac{2}{-3 \log(\E(P_1))} = \frac{2}{3 \log(3)} ,\]
and since in particular the latter quantity is strictly less than $2$ we get that $\mu$ is a.s. singular with respect to the Lebesgue measure, i.e. a.s. $\nu_{Leb} = 0$. Notice also that we have equality in the above when we take the special case $P_1 = \frac13$ a.s., that is $\nu = \nu^0$ as defined in \eref{nu0}.

\begin{proof}
We apply the second point of Corollary IV.b in \cite{Bar}. In our case, $c=3$, $(W_0,W_1,W_2)$ follows the $\Dir_2(\frac12)$ distribution, and $(L_0,L_1,L_2) = P = (P_1,P_2,P_3)$, where $P$ is a splitting ratio with distribution $\nu$. Barral's result states that the measure $\mu$ is supported by a set with Hausdorff dimension
\[ \dm(S(\mu)) = \frac{\E \left( \sum_{j=0}^{c-1} W_j \log W_j \right)}{\E \left( \sum_{j=0}^{c-1} W_j \log L_j \right)}\]
\[ = \frac{\E(W_0 \log(W_0))}{ \E(W_0) \E(\log(P_1))} ,\]
and the desired result follows from a computation.
\end{proof}

\end{document}